\newcommand{\hookuparrow}{\mathrel{\rotatebox[origin=c]{90}{$\hookrightarrow$}}}
\newcommand{\dashuparrow}{\mathrel{\rotatebox[origin=c]{90}{$\dasharrow$}}}
\newcommand{\twoheaddownarrow}{\mathrel{\rotatebox[origin=c]{90}{$\twoheadleftarrow$}}}
\newcommand*\kay{%
  \text{%
  \fontencoding{LS1}%
  \fontfamily{stixscr}%
  \fontseries{\textmathversion}%
  \fontshape{n}%
  \selectfont\symbol{"6B}}}
  \newcommand*\textmathversion{\csname textmv@\math@version\endcsname}
  \newcommand*\textmv@normal{m}
  \newcommand*\textmv@bold{b}
\newtheorem{theorem}{Theorem}[section]
\newtheorem{corollary}[theorem]{Corollary}
\newtheorem{lemma}[theorem]{Lemma}
\newtheorem{proposition}[theorem]{Proposition}
\theoremstyle{definition}
\newtheorem{example}[theorem]{Example}
\newtheorem{definition}[theorem]{Definition}
\newtheorem{remark}[theorem]{Remark}
\title{Doubly transitive lines II: Almost simple symmetries}
\author{Joseph W.\ Iverson\footnote{Department of Mathematics, Iowa State University, Ames, IA}\qquad Dustin~G.~Mixon\footnote{Department of Mathematics, The Ohio State University, Columbus, OH}}
\date{}
\begin{document}
\maketitle

\begin{abstract}
We study lines through the origin of finite-dimensional complex vector spaces that enjoy a doubly transitive automorphism group.
This paper classifies those lines that exhibit almost simple symmetries.
We introduce a general recipe involving Schur covers to recover doubly transitive lines from their automorphism group.
Combining our results with recent work on the affine case by Dempwolff and Kantor~\cite{DempwolffK:22}, we deduce a classification of all linearly dependent doubly transitive lines in real or complex space.
\end{abstract}

\section{Introduction}

Given a sequence $\mathscr{L}$ of $n$ lines through the origin of $\mathbb{C}^d$, consider the \textbf{automorphism group} $\operatorname{Aut}\mathscr{L}$ consisting of permutations of $\mathscr{L}$ that can be realized by applying a unitary operator.
We say $\mathscr{L}$ is \textbf{doubly transitive} if $\operatorname{Aut} \mathscr{L}$ acts doubly transitively on $\mathscr{L}=\{\ell_j\}_{j\in[n]}$, that is, for every $i,j,k,l\in[n]$ with $i\neq j$ and $k\neq l$, there exists $Q\in\operatorname{U}(d)$ such that $Q\ell_i=\ell_k$ and $Q\ell_j=\ell_l$.
Doubly transitive lines are highly symmetric and therefore worthy of study in their own right.
Case in point, G.~Higman first investigated doubly transitive lines in \textit{real} spaces (via their combinatorial \textit{two-graph} representation) as a means of studying Conway's sporadic simple group $\operatorname{Co}_3$~\cite{Taylor:77}.
At the time, one might have studied two-graphs with the hope of discovering a new simple group, since the unique minimal normal subgroup of every finite doubly transitive group is either simple or elementary abelian, as detailed further below.

Today, we have another reason to study doubly transitive lines.
By virtue of their high degree of symmetry, under mild conditions, doubly transitive lines are necessarily optimizers of a fundamental problem in discrete geometry, specifically, packing lines $\{\ell_j\}_{j\in[n]}$ through the origin of $\mathbb{C}^d$ so as to maximize the minimum chordal distance:
\[
\min_{\substack{1\leq i<j\leq n}}\sqrt{1-\operatorname{tr}(\Pi_i\Pi_j)},
\]
where $\Pi_j$ denotes orthogonal projection onto $\ell_j$.
The optimal line packings that are most frequently studied in the literature are spanned by \textbf{equiangular tight frames (ETFs)}, which are sequences of unit vectors $\{\varphi_j\}_{j\in[n]}$ in $\mathbb{C}^d$ with the property that there exist constants $A,\mu$ such that
\[
\sum_{j\in[n]}\varphi_j\varphi_j^*= A I,
\qquad
\qquad
|\langle \varphi_i,\varphi_j\rangle|= \mu
\quad
\text{for every}
\quad
i,j\in[n],~i\neq j.
\]
See \cite{FickusMixon:table} for a survey.
Indeed, the vectors in an ETF span lines that maximize the minimum chordal distance by achieving equality in the Welch bound \cite{Welch:74} (also known as the simplex bound \cite{ConwayHardinSloane:96}). 
In general, optimal line packings find applications in compressed sensing \cite{BandeiraFickusMixonWong:13}, multiple description coding \cite{StrohmerHeath:03}, digital fingerprinting \cite{MixonQuinnKiyavashFickus:13}, and quantum state tomography \cite{RenesBlumeKohoutScottCaves:04}.

The proposition below makes explicit when doubly transitive lines correspond to optimal line packings.
Here and throughout, the \textbf{span} of a sequence of lines is the smallest subspace containing those lines, and a sequence of lines is \textbf{linearly dependent} if one of the lines is contained in the span of the others.
Observe that a sequence of $n$ lines is linearly dependent if and only if it spans a space of dimension $d < n$.

\begin{proposition}[Lemma 1.1 in \cite{IversonM:18}]
\label{prop: 2tran implies ETF}
Given $n >d$ doubly transitive lines with span $\mathbb{C}^d$, select unit-norm representatives $\Phi = \{\varphi_j\}_{j\in[n]}$.
Then $\Phi$ is an equiangular tight frame.
\end{proposition}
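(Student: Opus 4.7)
The plan is to verify the two defining properties of an equiangular tight frame in turn: equiangularity of the off-diagonal Gram entries, and tightness of the frame operator.

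For equiangularity, I invoke double transitivity directly. Given $i\neq j$ and $k\neq l$, choose $Q \in \mathrm{U}(d)$ with $Q\ell_i = \ell_k$ and $Q\ell_j = \ell_l$. Since the $\varphi$'s are unit-norm representatives, $Q\varphi_i = \alpha\varphi_k$ and $Q\varphi_j = \beta\varphi_l$ for unimodular $\alpha,\beta$, and unitary invariance of the inner product gives $|\langle\varphi_i,\varphi_j\rangle| = |\langle\varphi_k,\varphi_l\rangle|$. Hence all off-diagonal Gram entries share a common modulus $\mu$.

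For tightness, I first observe that the frame operator $S = \sum_{j\in[n]}\varphi_j\varphi_j^*$ commutes with every $Q \in \mathrm{U}(d)$ that permutes $\mathscr{L}$: writing $Q\varphi_j = c_j\varphi_{\sigma(j)}$ with $|c_j|=1$, the conjugation $QSQ^* = \sum_j \varphi_{\sigma(j)}\varphi_{\sigma(j)}^*$ simply permutes the rank-one summands and reproduces $S$. Thus $S$ lies in the commutant of the lifted automorphism group $\tilde G \leq \mathrm{U}(d)$. I then argue that $\tilde G$ acts irreducibly on $\mathbb{C}^d$, so that Schur's lemma forces $S = A\cdot I$; the trace identity $\operatorname{tr}(S) = n$ fixes $A = n/d$, giving tightness, and hence the ETF conclusion.

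The technical heart, and main obstacle, is proving irreducibility. Suppose $\mathbb{C}^d$ admits a nontrivial $\tilde G$-invariant orthogonal decomposition $V\oplus V^\perp$, and write each $\varphi_j = u_j + w_j$ accordingly. Transitivity forces $\|u_j\|$ and $\|w_j\|$ to each be constant in $j$, and the spanning hypothesis $\operatorname{span}\{\varphi_j\} = \mathbb{C}^d$ rules out both degenerate cases $u_j \equiv 0$ and $w_j \equiv 0$. The hypothesis $n > d$ now enters: each block carries strictly more frame vectors than its dimension. Double transitivity descends to show $|\langle u_i,u_j\rangle|$ and $|\langle w_i,w_j\rangle|$ are each constant on $i\neq j$, and Schur's lemma applied to the $\tilde G$-equivariant intertwiner $\sum_j u_j w_j^* \colon V^\perp \to V$, combined with the rigidity that $|\langle u_i,u_j\rangle + \langle w_i,w_j\rangle|$ is also constant, yields a contradiction that rules out any nontrivial $\tilde G$-invariant decomposition of $\mathbb{C}^d$.
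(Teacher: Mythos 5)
The paper does not reprove this statement; it imports it as Lemma~2.1 of the first installment, so your proposal has to stand on its own. The equiangularity half is fine, and the reduction of tightness to irreducibility of the lifted group (via Schur's lemma applied to the frame operator) is sound \emph{if} irreducibility is established. But that is exactly where the argument breaks down, and you have correctly identified it as the technical heart while not actually supplying it. Two concrete problems. First, Schur's lemma says nothing about the intertwiner $\sum_j u_j w_j^*\colon V^\perp \to V$ unless the restrictions of the lifted group to $V$ and $V^\perp$ are irreducible (or at least share no common constituents), and a putative invariant subspace $V$ carries no such guarantee; you cannot even arrange it by passing to a minimal invariant subspace, since $V^\perp$ may contain an equivalent copy. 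Second, even in the favorable situation where the intertwiner does vanish --- for instance, taking $V$ to be an eigenspace of the frame operator $S$, which is group-invariant and automatically satisfies $P_V S P_{V^\perp}=0$ --- the resulting identity $\sum_{i\neq j}\langle u_i,u_j\rangle\langle w_j,w_i\rangle=-n\|u_1\|^2\|w_1\|^2$, together with the constancy of $\|u_j\|$, $\|w_j\|$, $|\langle u_i,u_j\rangle|$, $|\langle w_i,w_j\rangle|$, and $|\langle u_i,u_j\rangle+\langle w_i,w_j\rangle|$, is \emph{consistent}: if you write out the second-moment (trace of square) identities for $S$, $SP_V$, and $SP_{V^\perp}$, they balance identically for any eigenvalue decomposition, so no contradiction emerges at this level. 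The ``rigidity'' you invoke simply is not enough information, and the claimed contradiction is asserted rather than derived.

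The missing ingredient is a third-order use of double transitivity. The standard route (and, given the roux machinery of Section~3, almost certainly the route of Lemma~2.1 in the first installment) is combinatorial: writing the Gram matrix as $G=I+\mu\mathcal{S}$ with $\mathcal{S}$ the signature matrix, double transitivity forces the sum of triple products $\sum_{k\neq i,j}\mathcal{S}_{ik}\mathcal{S}_{kj}\mathcal{S}_{ji}$ to be a constant $c$ independent of the ordered pair $(i,j)$, whence $\sum_{k\neq i,j}\mathcal{S}_{ik}\mathcal{S}_{kj}=c\,\mathcal{S}_{ij}$ and therefore $G^2=\alpha I+\beta G$ for scalars $\alpha,\beta$. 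The hypothesis $n>d$ makes $G$ rank-deficient, so $0$ is an eigenvalue and $\alpha=0$; thus $G$ is a scaled projection, which is tightness, and $\operatorname{tr}G=n$ pins down the frame bound $n/d$. This bypasses irreducibility entirely (irreducibility is in fact true here, but it is a consequence of the primitive-idempotent structure from the Higman Pair Theorem, not something cheaper than tightness). I would either adopt that argument or supply a genuine proof of irreducibility; as written, the proposal has a real gap.
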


This paper is the second in a series that studies doubly transitive lines.
In the previous installment~\cite{IversonM:18}, we focused on developing the theory of \textit{Higman pairs} and \textit{roux}.
The former is a special type of Gelfand pair $(G,H)$ in which, as a consequence of part~(b) of the Higman Pair Theorem~\cite{IversonM:18} (see also Theorem~\ref{thm.Higman Pair Theorem} below), each of the irreducible constituents of the permutation representation of $G$ on the cosets of $H$ determines a sequence of linearly dependent doubly transitive lines.
In fact, every sequence of linearly dependent doubly transitive lines is determined by such a Gelfand pair (by part~(a) of the Higman Pair Theorem, which we prove in this paper).
Meanwhile, roux provide a convenient combinatorial generalization of doubly transitive lines in complex spaces, much like how G.~Higman's two-graphs generalize doubly transitive lines in real spaces.

Both Higman pairs and roux play a pivotal role in the present installment, where we provide a general recipe involving Schur coverings to find all complex line sequences whose automorphism group contains a given doubly transitive group.
We apply this machine to classify ``half'' of the doubly transitive lines.
Specifically, a theorem of Burnside gives two possibilities for a doubly transitive permutation group $S$ according to its \textbf{socle} $\operatorname{soc} S$, which for a doubly transitive group is its unique minimal nontrivial normal subgroup.

\begin{proposition}[Burnside~\cite{Burnside:97}]
\label{prop: Burnside}
If $S$ is a doubly transitive permutation group, then exactly one of the following holds:
\begin{itemize}
\item[(I)]
$\operatorname{soc} S$ is a regular elementary abelian subgroup that may be identified with $\mathbb{F}_p^m$ in such a way that $S = \mathbb{F}_p^m \rtimes G_0 \leq \operatorname{AGL}(m,p)$, where $G_0 \leq \operatorname{GL}(m,p)$ is the stabilizer of $0 \in \mathbb{F}_p^m$, or
\item[(II)]
$\operatorname{soc} S = G$ is a nonabelian simple group, and $G\unlhd S\leq\operatorname{Aut}(G)$.
\end{itemize}
\end{proposition}

Groups of type (I) are said to have \textbf{affine type}, while those of type (II) are called \textbf{almost simple}.
The former are characterized by having a normal elementary abelian subgroup.
(For further details, see Chapter~4 of~\cite{DixonMortimer:96}.)
In the time since Burnside, the doubly transitive permutation groups have been classified as a consequence of the classification of finite simple groups~\cite{Cameron:81}.
Since the Schur covers of simple groups were already determined in order to perform this classification, our general recipe is most readily applied to the almost simple doubly transitive groups.
We handle that case in this installment.
The affine case was recently handled by Dempwolff and Kantor~\cite{DempwolffK:22}, and by combining our results with theirs, we are able to deduce the complete classification of linearly dependent doubly transitive lines.

In the theorem below, two line sequences $\mathscr{L} = \{ \ell_j \}_{j\in [n]}$ and $\mathscr{L}' = \{ \ell_j' \}_{j\in [n]}$ in $\mathbb{C}^d$ are called \textbf{unitarily equivalent} if there is a unitary $U \in \operatorname{U}(d)$ and a permutation $\sigma \in S_n$ such that $U \ell_j = \ell_{\sigma(j)}'$ for every $j \in [n]$; then we also say $\sigma$ \textbf{induces} a unitary equivalence $\mathscr{L} \to \mathscr{L}'$.
Meanwhile, a line sequence $\mathscr{L} = \{ \ell_j \}_{j\in [n]}$ in $\mathbb{C}^d$ is \textbf{real} if there is a unitary $U \in \operatorname{U}(d)$ and a choice of unit-norm representatives $\{ \varphi_j \}_{j\in [n]}$ such that $U \varphi_j \in \mathbb{R}^d \subset \mathbb{C}^d$ for every $j \in [n]$.
(This is not the definition of real lines given in~\cite{IversonM:18}, but the notions are equivalent.
See the comments following Proposition~\ref{prop:real lines are real}.)

\begin{theorem}[Classification of linearly dependent doubly transitive lines]
\label{thm:classification}
Let $\mathscr{L}$ be a sequence of $n\geq 2d > 2$ lines with span $\mathbb{C}^d$ such that $S := \operatorname{Aut} \mathscr{L}$ is doubly transitive.
Then one of the following holds:
\begin{itemize}
\item[(I)]
$S$ has a regular normal subgroup $G$ that is elementary abelian of order $n = p^{2m}$, where $p$ is prime.
Following bijections $\mathscr{L} \cong G \cong \mathbb{F}_p^{2m}$, then $S$ is isomorphic to a subgroup of $\operatorname{AGL}(2m,p)$, the stabilizer $S_0 \leq S$ of the line corresponding to $0 \in \mathbb{F}_p^{2m}$ can be identified with
a subgroup of $\operatorname{GL}(2m,p)$, and one of the following holds:

	\begin{itemize}
	\item[(A)]
	The lines are real, and
		\begin{itemize}
		\item[(i)]
		$d = 2^{m-1}(2^m - 1)$, $n = 2^{2m}$, $S_0 = \operatorname{Sp}(2m,2)$, $m > 1$.
		\end{itemize}
	
	\item[(B)]
	The lines are not real, and $(d,n,S_0)$ satisfy one of the following:
		\begin{itemize}
		\item[(ii)]
		$d = p^m(p^m-1)/2$, $n=p^{2m}$, $S_0=\operatorname{Sp}(2m,p)$, $p>2$ is prime, $m \geq 1$.
		\item[(iii)]
		$d = 2$, $n=4$, $|S_0|=3$.
		\item[(iv)]
		$d = 8$, $n = 64$, $S_0=G_2(2)'\cong\operatorname{PSU}(3,3)$.
		\end{itemize}
	
	\end{itemize}
	
\item[(II)]
$S$ has a simple normal subgroup $G \unlhd S \leq \operatorname{Aut} G$ such that one of the following holds:

	\begin{itemize}
	\item[(C)]
	The lines are real, and $(d,n,G)$ satisfy one of the following:
		\begin{itemize}
		\item[(v)]
		$d=(q+1)/2$, $n=q+1$, $G=\operatorname{PSL}(2,q)$, $q\equiv1\bmod4$ is a prime power.
		\item[(vi)]
		$d=q^2-q+1$, $n=q^3+1$, $G=\operatorname{PSU}(3,q)$, $q$ is an odd prime power. 
		\item[(vii)]
		$d=q^2-q+1$, $n=q^3+1$, $G={}^2G_2(q)$, $q=3^{2m+1}$, $m \geq 0$.
		\item[(viii)] 
		$d=(2^{2m-1}-3\cdot2^{m-1}+1)/3$, $n=2^{2m-1}-2^{m-1}$, $G=\operatorname{Sp}(2m,2)$, $m\geq3$.
		\item[(ix)]
		$d=(2^{2m-1}+3\cdot2^{m-1}+1)/3$, $n=2^{2m-1}+2^{m-1}$, $G=\operatorname{Sp}(2m,2)$, $m\geq3$.
		\item[(x)]
		$d=22$, $n=176$, $G=\operatorname{HS}$ (the Higman--Sims group).
		\item[(xi)]
		$d=23$, $n=276$, $G=\operatorname{Co}_3$ (the third Conway group).
		\end{itemize}
	
	\item[(D)]
	The lines are not real, and $(d,n,G)$ satisfy one of the following:
		\begin{itemize}
		\item[(xii)]
		$d=(q+1)/2$, $n=q+1$, $G=\operatorname{PSL}(2,q)$, $q\equiv3\bmod4$ is a prime power, $q > 3$.
		\item[(xiii)]
		$d=q^2-q+1$, $n=q^3+1$, $G=\operatorname{PSU}(3,q)$, $q$ is a prime power, $q > 2$.
		\end{itemize}
	\end{itemize}
\end{itemize}
Conversely, there exist lines satisfying each of (i)--(xiii).
For (i)--(xii), the lines are unique up to unitary equivalence.
For (xiii), the lines are unitarily equivalent to some of those constructed in Theorem~\ref{thm.updated unitary groups}(b).
\end{theorem}

\begin{remark}
\label{rem: coincidences}
The lines in case (xiii) of Theorem~\ref{thm:classification} are not unique, but all equivalences are detailed in Theorem~\ref{thm:unitary equivalences}.
There is exactly one other equivalence among (i)--(xiii).
Namely, there is just one sequence of $n=28$ equiangular real lines in dimension $d=7$ up to unitary equivalence, and it is repeated in (vi), (vii), and (viii).
There are no other coincidences.
All the other lines can be distinguished by their realness and their automorphism groups.
As described in Section~\ref{sec:real}, the real lines of (A) and (C) are equivalent to two-graphs, and their automorphism groups appear in~\cite{Taylor:92}.
They are distinct.
The automorphism groups of (B) are listed above from~\cite{DempwolffK:22}, and those of (D) appear in Section~\ref{sec: equivalence}.
They are also distinct.
\end{remark}

In Theorem~\ref{thm:classification}, the global hypothesis $n\geq 2d$ is imposed only to simplify exposition.
Lines spanned by the unit-norm columns of a tight frame $\Phi \in \mathbb{C}^{d \times n}$ naturally pair with those spanned by columns of a \textbf{Naimark complement} $\Psi \in \mathbb{C}^{(n-d) \times n}$ with 
$\Psi^* \Psi = \frac{n}{n-d}(I - \frac{d}{n} \Phi^* \Phi)$.
The lines spanned by Naimark complements exhibit identical automorphism groups, as an easy consequence of Proposition~\ref{prop:switching equiv is unitary equiv} below.
Hence, each of the line sequences described in Theorem~\ref{thm:classification} with $n>2d$ pairs with another sequence of $n$ doubly transitive lines spanning a space of dimension $n-d$.

Real equiangular lines are equivalent to two-graphs (see Proposition~\ref{prop:real lines are two-graphs}), and constructions for all the examples of (A) and (C) appear in Section~6 of~\cite{Taylor:77}.
We provide additional descriptions of (v) and (vi) in Example~\ref{ex.Paley conference} and Theorem~\ref{thm.updated unitary groups}(b), respectively.
See Example~5.10 of~\cite{IversonM:18} for another description of~(i), and also for (ii) and (iii).
A construction of (iv) appears in Example~5.11 of~\cite{IversonM:18}.
For (xii), see Example~\ref{ex.Paley conference}; another description involving Higman pairs appears in Example~5.8 of~\cite{IversonM:18}.
In addition to Theorem~\ref{thm.updated unitary groups}(b), the lines in (xiii) can be described using Higman pairs as in Example~5.9 of~\cite{IversonM:18}.

As explained in Section~\ref{sec:real}, the real cases (A) and (C) follow Taylor's classification of doubly transitive two-graphs~\cite{Taylor:92}.
The cases in which $n=d^2$ (of which three exist) were classified by Zhu~\cite{Zhu:15}.
The affine case (I) is due to Dempwolff and Kantor~\cite{DempwolffK:22}.
Our contribution to the classification is the almost simple case (II).

\begin{theorem}[Main result]
\label{thm:main result}
Let $\mathscr{L}$ be a sequence of $n\geq 2d > 2$ lines with span $\mathbb{C}^d$ such that $S := \operatorname{Aut} \mathscr{L}$ is doubly transitive and almost simple.
Then $\mathscr{L}$ takes the form described in Theorem~\ref{thm:classification}(II).
Conversely, there exist lines satisfying each of Theorem~\ref{thm:classification}(v)--(xiii). 
For (v)--(xii), the lines are unique up to unitary equivalence.
For (xiii), the lines are unitarily equivalent to some of those constructed in Theorem~\ref{thm.updated unitary groups}(b).
\end{theorem}

In the next section, we verify that doubly transitive real lines amount to doubly transitive two-graphs.
Section~3 then reviews necessary results from the previous installment~\cite{IversonM:18}.
We prove part~(a) of the Higman Pair Theorem in Section~4, and our proof suggests a program for classifying doubly transitive lines.
Section~5 builds up the machinery needed to accomplish such a classification, and then Section~6 performs this classification in the almost simple case, ultimately proving Theorems~\ref{thm:classification} and~\ref{thm:main result}.
Finally, Section~7 finds the automorphism groups in case (D) and uses them to sort out unitary equivalences for (xiii).

\section{
Doubly transitive real lines and two-graphs}
\label{sec:real}

Taylor~\cite{Taylor:92} has completely classified doubly transitive two-graphs, which are known to be equivalent to doubly transitive lines in $\mathbb{R}^d$.
In contrast, this series concerns doubly transitive lines in $\mathbb{C}^d$.
Every line $\ell \subset \mathbb{R}^d$ embeds into a unique line $\ell' \subset \mathbb{C}^d$ through the inclusion $\mathbb{R}^d \subset \mathbb{C}^d$: if $\ell = \operatorname{span}_{\mathbb{R}} \{ \varphi \}$, then $\ell' = \operatorname{span}_{\mathbb{C}} \{ \varphi \}$.
In this section, we show that this embedding preserves automorphism groups.
Consequently, Taylor's results apply for real lines in the complex setting.

Throughout, we abuse notation by letting $\Phi=\{\varphi_j\}_{j\in[n]}$ denote both a sequence in $\mathbb{C}^d$ and the $d\times n$ matrix whose $j$th column is $\varphi_j$.
Its \textbf{Gram matrix} is $\Phi^* \Phi = [ \langle \varphi_j, \varphi_i \rangle ]_{i,j \in [n]} \in \mathbb{C}^{n\times n}$.

\begin{proposition}
\label{prop:real lines are real}
A sequence of lines in $\mathbb{C}^d$ is real if and only if it has unit-norm representatives $\Phi = \{ \varphi_j \}_{j\in [n]}$ whose Gram matrix $\Phi^* \Phi$ resides in $\mathbb{R}^{n\times n} \subset \mathbb{C}^{n\times n}$.
\end{proposition}

\begin{proof}
Let $\mathscr{L} = \{ \ell_j \}_{j\in [n]}$ be a sequence of lines in $\mathbb{C}^d$.
In the forward direction, suppose $\mathscr{L}$ is real, and choose unit-norm representatives $\Phi = \{ \varphi_j \}_{j\in [n]}$ and a unitary $U \in \operatorname{U}(d)$ such that $U \varphi_j \in \mathbb{R}^d \subset \mathbb{C}^d$ for every $j \in [n]$.
Then $[\Phi^* \Phi]_{ij} = \varphi_i^* \varphi_j = (U \varphi_i)^* (U \varphi_j) \in \mathbb{R}$ for every $i,j \in [n]$.
In the reverse direction, suppose $\Phi = \{ \varphi_j \}_{j\in[n]}$ is a choice of unit-norm representatives for $\mathscr{L}$ such that $\Phi^* \Phi$ is real-valued.
Since $\Phi^* \Phi$ is positive semidefinite with rank at most $d$, there exists $\Psi = \{ \psi_j \}_{j \in [n]}$ in $\mathbb{R}^d \subset \mathbb{C}^d$ such that $\Psi^* \Psi = \Phi^*\Phi$.
It is well known that vectors in $\mathbb{C}^d$ are determined up to unitary equivalence by their Gram matrices, so there exists a unitary $U \in \operatorname{U}(d)$ such that $U \varphi_j = \psi_j \in \mathbb{R}^d$ for every $j \in [n]$.
Hence, $\mathscr{L}$ is real.
\end{proof}

The previous installment of this series defined real lines in a third equivalent way using \textit{signature matrices}, which play a significant role in the theory of equiangular lines.
Given any sequence $\mathscr{L}$ of linearly dependent equiangular lines, choose unit norm representatives $\Phi = \{ \varphi_j \}_{j\in [n]}$ satisfying $ | \langle \varphi_i, \varphi_j \rangle | = \mu \neq 0$ for every $i \neq j \in [n]$, and consider $\mathcal{S} := \mu^{-1}(\Phi^*\Phi - I)$.
We call $\mathcal{S} \in \mathbb{C}^{n \times n}$ a \textbf{signature matrix} since it satisfies all of the following:
\begin{itemize}
\item[(i)]
$\mathcal{S}_{ii} = 0$ for every $i \in [n]$,
\item[(ii)]
$| \mathcal{S}_{ij} | = 1$ whenever $i \neq j \in [n]$, and
\item[(iii)]
$\mathcal{S}^* = \mathcal{S}$.
\end{itemize}
Property (ii) says that each off-diagonal element of $\mathcal{S}$ is a \textbf{phase}, i.e., a member of the multiplicative group $\mathbb{T}:=\{ z \in \mathbb{C} : |z| = 1\}$.
(Similarly, the phase of a nonzero complex number $z \in \mathbb{C}^\times$ is defined as $z/|z| \in \mathbb{T}$.)
For any choice of phases $\{ \omega_j \}_{j \in [n]}$, the matrix $D := \operatorname{diag}(\omega_1,\dotsc,\omega_n)$ determines another signature matrix $\mathcal{S}' := D^{-1} \mathcal{S} D$.
In terms of $\mathscr{L}$, $\mathcal{S}'$ corresponds to the different choice of unit-norm representatives $\{ \omega_j \varphi_j \}_{j \in [n]}$.
We say that $\mathcal{S}$ and $\mathcal{S}'$ are \textbf{switching equivalent}, and write $\mathcal{S} \sim \mathcal{S}'$.
The \textbf{normalized signature matrix} of $\mathscr{L}$ is the unique $\mathcal{S}' \sim \mathcal{S}$ such that $\mathcal{S}'_{1j} =  \mathcal{S}'_{j1} = 1$ for every $j > 1$.
Conversely, any signature matrix $\mathcal{S}$ gives rise to a unique unitary equivalence class of linearly dependent equiangular lines.
Indeed, the least eigenvalue $\lambda$ of $\mathcal{S}$ is strictly negative since $\operatorname{tr} \mathcal{S} = 0$, and so $-\lambda^{-1} \mathcal{S} + I \geq 0$ can be factored as $\Phi^*\Phi$ for some unit vectors $\Phi$, which span linearly dependent equiangular lines having $\mathcal{S}$ as a signature matrix.
As a consequence of Proposition~\ref{prop:real lines are real}, a sequence of linearly dependent equiangular lines is real if and only if its normalized signature matrix is real-valued.
In~\cite{IversonM:18}, the latter notion was used as the definition of real lines.

The following result relates switching equivalence with unitary equivalence.
It follows easily from the fact that Gram matrices determine vector sequences up to unitary equivalence.

\begin{proposition}
\label{prop:switching equiv is unitary equiv}
Let $\mathcal{S},\mathcal{S'} \in \mathbb{C}^{n\times n}$ be signature matrices for equiangular line sequences $\mathscr{L}$ and $\mathscr{L}'$ in $\mathbb{C}^d$, and let $\sigma \in S_n$ be a permutation with matrix representation $P = [ \delta_{i,\sigma(j)} ]_{i,j \in [n]} \in \mathbb{C}^{n\times n}$.
Then $\mathcal{S}'$ is switching equivalent to $P \mathcal{S} P^{-1}$ if and only if $\sigma$ induces a unitary equivalence $\mathscr{L} \to \mathscr{L}'$.
In particular, $\mathcal{S}$ is switching equivalent to $P \mathcal{S} P^{-1}$ if and only if $\sigma \in \operatorname{Aut} \mathscr{L}$.
\end{proposition}

\begin{definition}
\label{def:autRC}
Let $\mathscr{L} = \{ \ell_j \}_{j\in [n]}$ be a sequence of lines in $\mathbb{R}^d$, and let $\mathscr{L}' = \{ \ell_j ' \}_{j\in [n]}$ be its embedding in $\mathbb{C}^d$.
We define $\operatorname{Aut}_{\mathbb{C}} \mathscr{L} := \operatorname{Aut} \mathscr{L}'$ and
\[ \operatorname{Aut}_{\mathbb{R}} \mathscr{L} := \{ \sigma \in S_n : \text{there exists } Q \in \operatorname{O}(d) \text{ such that } Q \ell_j = \ell_{\sigma(j)} \text{ for every } j \in [n] \}, \]
where $\operatorname{O}(d) \leq \mathbb{R}^{d\times d}$ is the orthogonal group.
\end{definition}

\begin{lemma}
\label{lem:orthogonal equiv}
Let $\mathscr{L} = \{ \ell_j \}_{j \in [n]}$ and $\mathscr{K} = \{ \kay_j \}_{j\in [n]}$ be sequences of lines in $\mathbb{R}^d$, and let $\mathscr{L}' = \{ \ell_j' \}_{j \in [n]}$ and $\mathscr{K}' = \{ \kay_j' \}_{j \in [n]}$ be their embeddings in $\mathbb{C}^d$.
Then the following are equivalent:
\begin{itemize}
\item[(a)]
there exists $Q \in \operatorname{O}(d)$ with $Q \ell_j = \kay_j$ for every $j \in [n]$,
\item[(b)]
there exists $U \in \operatorname{U}(d)$ with $U \ell_j' = \kay_j'$ for every $j \in [n]$.
\end{itemize}
In particular, $\operatorname{Aut}_{\mathbb{R}} \mathscr{L} = \operatorname{Aut}_{\mathbb{C}} \mathscr{L}$.
\end{lemma}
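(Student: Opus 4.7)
My plan is to prove (a) $\Rightarrow$ (b) trivially, since every element of $\operatorname{O}(d)$ acts as a unitary on $\mathbb{C}^d$, and to reduce (b) $\Rightarrow$ (a) to the well-known fact that two finite sequences of real vectors sharing a common Gram matrix differ by an element of $\operatorname{O}(d)$. The concluding equality $\operatorname{Aut}_{\mathbb{R}}\mathscr{L} = \operatorname{Aut}_{\mathbb{C}}\mathscr{L}$ is then immediate upon applying the equivalence to $\mathscr{K} = \{\ell_{\sigma(j)}\}_{j\in[n]}$ for each $\sigma \in S_n$.

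For (b) $\Rightarrow$ (a), I would fix unit-norm real representatives $\varphi_j \in \ell_j$ and $\psi_j \in \kay_j$. A unitary $U$ with $U\ell_j' = \kay_j'$ sends $U\varphi_j = \omega_j \psi_j$ for unimodular $\omega_j \in \mathbb{C}$, and preservation of inner products yields the identity $\langle \varphi_i, \varphi_j\rangle = \overline{\omega_i}\omega_j\langle \psi_i, \psi_j\rangle$. Both Gram entries are real, so whenever $\langle \psi_i, \psi_j\rangle \neq 0$ the unimodular scalar $\overline{\omega_i}\omega_j$ must itself be real, hence $\pm 1$. The goal then becomes producing a global sign assignment $\epsilon_j \in \{\pm 1\}$ satisfying $\epsilon_i \epsilon_j = \overline{\omega_i}\omega_j$ on every such pair: once these signs exist, the sequences $\{\varphi_j\}$ and $\{\epsilon_j \psi_j\}$ share a real Gram matrix, and the standard fact furnishes $Q \in \operatorname{O}(d)$ with $Q\varphi_j = \epsilon_j \psi_j$, hence $Q\ell_j = \kay_j$.

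The main obstacle is this global sign consistency, which I would resolve by forming the graph on $[n]$ whose edges are the pairs $\{i,j\}$ with $\langle \psi_i, \psi_j\rangle \neq 0$. Within each connected component I fix a base vertex $j_0$ and define $\epsilon_j := \overline{\omega_{j_0}}\omega_j$; telescoping along any path from $j_0$ to $j$ expresses this as a product of edge-indexed scalars $\overline{\omega_a}\omega_b$, each already known to be $\pm 1$, so $\epsilon_j \in \{\pm 1\}$. For isolated singleton components one may set $\epsilon_j = 1$ arbitrarily, and across distinct components the constraint is vacuous since the relevant inner products already vanish. A short computation then gives $\epsilon_i \epsilon_j = \overline{\omega_i}\omega_j$ within each component, completing the reduction.
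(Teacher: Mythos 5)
Your proposal is correct and follows essentially the same route as the paper: both reduce (b)~$\Rightarrow$~(a) to the fact that real Gram matrices determine vector sequences up to orthogonal equivalence, and both resolve the global sign ambiguity by working component-by-component in the graph whose edges are the pairs with nonvanishing inner product (the paper's ``frame graph''), anchoring the phases at a base vertex in each component. The concluding deduction of $\operatorname{Aut}_{\mathbb{R}}\mathscr{L} = \operatorname{Aut}_{\mathbb{C}}\mathscr{L}$ via $\mathscr{K} = \{\ell_{\sigma(j)}\}_{j\in[n]}$ is also identical to the paper's.
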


\begin{proof}
Choose unit-norm representatives $\varphi_j \in \ell_j \subset \mathbb{R}^d$ and $\psi_j \in \kay_j \subset \mathbb{R}^d$ for every $j \in [n]$.
Then (a) holds if and only if there exists $Q \in \operatorname{O}(d)$ and scalars $a_j \in \{ \pm 1 \}$ such that $Q \varphi_j = a_j \psi_j$ for every $j \in [n]$.
It is well known that vector sequences in $\mathbb{R}^d$ are determined up to orthogonal equivalence by their Gram matrices.
Hence, (a) holds if and only if there exist scalars $a_j \in \{ \pm 1 \}$ such that
\begin{equation}
\label{eq:orthequiv}
\langle \varphi_j, \varphi_i \rangle = a_i a_j \langle \psi_j, \psi_i \rangle \quad \text{for every }i,j \in [n].
\end{equation}
Similarly, (b) holds if and only if there are scalars $c_j \in \mathbb{T}$ such that 
\begin{equation}
\label{eq:unitequiv}
\langle \varphi_j, \varphi_i \rangle = \overline{c_i} c_j \langle \psi_j, \psi_i \rangle \quad \text{for every }i,j \in [n].
\end{equation}

It is now obvious that (a) implies (b).
For the converse, assume (b) holds and choose scalars $c_j \in \mathbb{T}$ to satisfy \eqref{eq:unitequiv}.
Define $\mathscr{G}$ to be the graph on the vertex set $[n]$ with an edge joining $i \neq j$ whenever $\langle \varphi_j, \varphi_i \rangle \neq 0$.
(This is the \emph{frame graph} of~\cite{Strawn:07,AN:18}.)
Equivalently, vertices $i \neq j$ are adjacent when $\langle \psi_j, \psi_i \rangle \neq 0$.

Choose representatives $i_1,\dotsc,i_r \in [n]$ for each of the connected components of $\mathscr{G}$, and define new scalars $\{ a_j \}_{j\in [n]}$ by setting $a_j = \overline{c_{i_k}} c_j$ whenever there is a path from $j$ to $i_k$.
Then $\overline{a_i} a_j = \overline{c_i} c_j$ whenever $i,j \in [n]$ lie in the same connected component of $\mathscr{G}$.
For every $i,j \in [n]$, \eqref{eq:unitequiv} gives $\langle \varphi_j, \varphi_i \rangle = \overline{a_i} a_j \langle \psi_j, \psi_i \rangle$, both sides of the equation being $0$ when there is no path from $i$ to $j$.

It remains to show that $a_j \in \{\pm 1\}$ for every $j \in [n]$.
Given adjacent vertices $i,j \in [n]$, we can divide the relation $\langle \varphi_j, \varphi_i \rangle = \overline{a_i} a_j \langle \psi_j, \psi_i \rangle$ to deduce that $\overline{a_i} a_j \in \mathbb{R} \cap \mathbb{T} =  \{ \pm 1\}$.
If one of $a_i,a_j$ belongs to $\{\pm 1\}$, the other does as well.
Since $a_{i_k} = \overline{c_{i_k}} c_{i_k} = 1$ for every $k \in [r]$, we quickly see that $a_j \in \{\pm 1\}$ for every $j \in [n]$.
This proves (a).

Finally, we conclude that $\operatorname{Aut}_{\mathbb{R}} \mathscr{L} = \operatorname{Aut}_{\mathbb{C}} \mathscr{L}$ by taking any $\sigma \in S_n$ and setting $\mathscr{K} = \{ \ell_{\sigma(j)} \}_{j \in [n]}$.
Then $\sigma \in \operatorname{Aut}_{\mathbb{R}} \mathscr{L}$ if and only if (a) holds, while $\sigma \in \operatorname{Aut}_{\mathbb{C}} \mathscr{L}$ if and only if (b) holds.
\end{proof}

\begin{definition}
A \textbf{two-graph} is a pair $(\Omega, \mathcal{T})$ where $\Omega$ is a finite set of vertices and $\mathcal{T}$ is a collection of 3-subsets of $\Omega$, such that every 4-subset of $\Omega$ contains an even number of 3-subsets from $\mathcal{T}$.
The \textbf{automorphism group} of $(\Omega,\mathcal{T})$ consists of all permutations of $\Omega$ that preserve $\mathcal{T}$.
\end{definition}

\begin{proposition}
\label{prop:real lines are two-graphs}
Given a sequence $\mathscr{L}$ of $n$ real, linearly dependent, equiangular lines in $\mathbb{C}^d$, choose unit-norm representatives $\{ \varphi_j \}_{j\in [n]}$ and put
\[ \mathcal{T}_{\mathscr{L}} := \Bigl\{ \{ i,j,k\} \subset [n] : i \neq j \neq k \neq i \text{ and } \mu^{-3} \langle \varphi_i, \varphi_j \rangle \langle \varphi_j, \varphi_k \rangle \langle \varphi_k, \varphi_i \rangle = -1 \Bigr\}, \]
where $\mu := | \langle \varphi_i, \varphi_j \rangle |$ for $i \neq j$.
Then $([n], \mathcal{T}_{\mathscr{L}})$ is a two-graph whose automorphism group equals $\operatorname{Aut}(\mathscr{L})$.
Moreover, the mapping $\mathscr{L} \mapsto \mathcal{T}_{\mathscr{L}}$ induces a bijection between unitary equivalence classes of real, linearly dependent, equiangular lines and isomorphism equivalence classes of two-graphs.
In this correspondence, doubly transitive real lines map to doubly transitive two-graphs, and vice versa.
\end{proposition}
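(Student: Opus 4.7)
The plan is to translate the problem into the language of the normalized signature matrix $\mathcal{S}$ of $\mathscr{L}$, which is real and $\pm 1$-valued off the diagonal since $\mathscr{L}$ is real. The expression $\mu^{-3}\langle \varphi_i,\varphi_j\rangle \langle \varphi_j,\varphi_k\rangle \langle \varphi_k,\varphi_i\rangle$ in the definition of $\mathcal{T}_{\mathscr{L}}$ is invariant under rescaling each $\varphi_j$ by a unimodular phase, since the phases cancel cyclically; by Proposition~\ref{prop:real lines are real} we may take representatives with $\Phi^*\Phi$ real, making each triple product equal to $\pm \mu^3$. Hence $\mathcal{T}_{\mathscr{L}}$ is well-defined and consists of exactly the triples whose product of signature entries $\mathcal{S}_{ij}\mathcal{S}_{jk}\mathcal{S}_{ki}$ equals $-1$.

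For the two-graph axiom, I would fix a $4$-subset $\{i,j,k,l\}$ and multiply the four triple signs. Each of the $\binom{4}{2}=6$ edges of the subset appears in exactly two of the four triples, and symmetry of real inner products makes its two contributions equal, so the product reduces to $\mu^{-12}\prod_{\{a,b\}}\langle \varphi_a,\varphi_b\rangle^2 = 1$. Consequently an even number of the four triples lie in $\mathcal{T}_{\mathscr{L}}$.

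To show $\operatorname{Aut}(\mathscr{L}) = \operatorname{Aut}([n],\mathcal{T}_{\mathscr{L}})$, the forward inclusion is direct: if $Q\varphi_j = \omega_j \varphi_{\sigma(j)}$, then $\langle \varphi_{\sigma(i)},\varphi_{\sigma(j)}\rangle = \overline{\omega_i}\omega_j\langle \varphi_i,\varphi_j\rangle$, and the phases cancel cyclically in the triple product. For the converse, a two-graph automorphism $\sigma$ preserves every triple sign, so the permuted matrix $\mathcal{S}^\sigma_{ij} := \mathcal{S}_{\sigma(i),\sigma(j)}$ has the same triple products as $\mathcal{S}$. Setting $D = \operatorname{diag}(D_j)$ with $D_1=1$ and $D_j = \mathcal{S}_{1j}\mathcal{S}^\sigma_{1j}\in\{\pm 1\}$, a short computation using the matching triple product at $\{1,i,j\}$ yields $D^{-1}\mathcal{S}^\sigma D = \mathcal{S}$. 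The rescaled sequence $\{D_j\varphi_{\sigma(j)}\}$ then has the same Gram matrix as $\{\varphi_j\}$, and the standard fact that vectors in $\mathbb{C}^d$ are determined up to unitary equivalence by their Gram matrix produces $Q \in \operatorname{U}(d)$ with $Q\varphi_j = D_j\varphi_{\sigma(j)}$, giving $\sigma \in \operatorname{Aut}(\mathscr{L})$.

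Finally, the same switching-diagonal argument shows $\mathscr{L}\mapsto\mathcal{T}_{\mathscr{L}}$ is injective on unitary equivalence classes. Surjectivity follows by reversing the construction: given any two-graph on $[n]$, use the triple signs at $\{1,i,j\}$ to define a normalized real $\pm 1$-valued signature matrix, and invoke the signature-matrix-to-lines recipe reviewed before Proposition~\ref{prop:real lines are real} to produce equiangular lines with the prescribed two-graph. The doubly transitive claim is then immediate from the equality of automorphism groups. I expect the main obstacle to be the converse direction of the automorphism equality, which requires the explicit switching diagonal together with Gram matrix uniqueness; the remaining parts are essentially bookkeeping.
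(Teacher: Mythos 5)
Your argument is correct, but it takes a genuinely different route from the paper. The paper's proof is essentially a composition of two known correspondences: it uses Proposition~\ref{prop:real lines are real} and Lemma~\ref{lem:orthogonal equiv} to identify unitary equivalence classes of real complex line sequences with orthogonal equivalence classes of line sequences in $\mathbb{R}^d$ (preserving automorphism groups), and then cites the classical Seidel--Cameron correspondence between such real line sequences and two-graphs; the only thing verified directly is that the triple products are invariants of the unitary equivalence class. You instead reprove the whole correspondence from scratch inside $\mathbb{C}^d$: the parity check on $4$-subsets, the switching diagonal $D_j = \mathcal{S}_{1j}\mathcal{S}^{\sigma}_{1j}$ combined with Gram-matrix rigidity for the hard inclusion $\operatorname{Aut}([n],\mathcal{T}_{\mathscr{L}}) \subseteq \operatorname{Aut}\mathscr{L}$, and the normalized-signature-matrix construction for surjectivity. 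This buys a self-contained proof that makes the mechanism explicit (and implicitly reproves the classical facts the paper cites), at the cost of length; the paper's route is shorter and reuses Lemma~\ref{lem:orthogonal equiv}, which it needs anyway. Two small points you should make explicit if you write this up: (1) your switching argument at the triple $\{1,i,j\}$ requires every off-diagonal signature entry to be nonzero, which holds here precisely because the lines are equiangular and linearly dependent, so $\mu \neq 0$; (2) in the surjectivity step, after building the normalized signature matrix from the triples containing vertex $1$, you still need the two-graph parity axiom on the $4$-sets $\{1,i,j,k\}$ to conclude that the resulting line sequence realizes the prescribed membership for triples \emph{not} containing vertex $1$ --- as written, your construction only guarantees agreement on triples through the base vertex.
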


\begin{proof}
For the first part, we may apply a global unitary to assume $\mathscr{L}$ is a sequence of lines in real space.
Then it is well known (cf.\ \cite{Seidel:91}) that $T_{\mathscr{L}}$ is a two-graph, and (cf.\ \cite{Cameron:77}) that $\operatorname{Aut} T_{\mathscr{L}} = \operatorname{Aut}_{\mathbb{R}} \mathscr{L}$, which coincides with $\operatorname{Aut}_{\mathbb{C}} \mathscr{L}$ by Lemma~\ref{lem:orthogonal equiv}.
In particular, $\mathscr{L}$ is doubly transitive if and only if $\mathcal{T}_{\mathscr{L}}$ is, too.

By Lemma~\ref{lem:orthogonal equiv}, the embedding $\mathbb{R}^d \subset \mathbb{C}^d$ induces a bijection between linearly dependent equiangular line sequences in $\mathbb{R}^d$ up to global orthogonal transformation, and linearly dependent equiangular real line sequences in $\mathbb{C}^d$ up to global unitary.
It is also well known (cf.\ \cite{Seidel:91} again) that the mapping $\mathscr{L} \to \mathcal{T}_{\mathscr{L}}$ gives a bijection between linearly independent equiangular line sequences in real space up to global orthogonal transformation, and two-graphs on vertex set $[n]$.
The proof is completed by composing these bijections.
\end{proof}

With each two-graph $(\mathcal{T},[n])$ we associate a normalized signature matrix $\mathcal{S} \in \mathbb{C}^{n\times n}$, defined as follows.
Set $\mathcal{S}_{ii} = 0$ for every $i \in [n]$, and $\mathcal{S}_{i1} = \mathcal{S}_{1i} = 1$ for every $i \neq 1$.
When $i \neq j$ are both different from $1$, put $\mathcal{S}_{ij} = -1$ if $\{ 1 ,i ,j \} \in \mathcal{T}$ and $\mathcal{S}_{ij} = 1$ otherwise.
Then $\mathcal{S}$ is the normalized signature matrix of a sequence $\mathscr{L}$ of linearly dependent, real, equiangular lines such that $\mathcal{T} = \mathcal{T}_{\mathscr{L}}$.
The two-graph $(\mathcal{T},[n])$ is called \textbf{regular} if $\mathcal{S}$ has exactly two eigenvalues $\lambda_1 > 0 > \lambda_2$.
In that case, $\mathscr{L}$ spans a space of dimension $d = \frac{n\lambda_1}{\lambda_1 - \lambda_2}$, while $n-d = \frac{-n\lambda_2}{\lambda_1 - \lambda_2}$.
Every doubly transitive two-graph is regular, as an easy consequence of Theorem~2.2 in~\cite{Taylor:77}.

In light of Proposition~\ref{prop:real lines are two-graphs}, doubly transitive real lines are either linearly independent or else they follow Taylor's classification of doubly transitive two-graphs~\cite{Taylor:92}.
We summarize the classification of doubly transitive real lines in Proposition~\ref{prop:real2tran} below.
Information about the automorphism groups is taken from \cite[Theorem~1]{Taylor:92}, while existence and uniqueness are provided by \cite[Theorem~2]{Taylor:92}.
For each of the cases (i)--(viii) below, a corresponding two-graph is described in~\cite[Section~6]{Taylor:77}.
There one also finds eigenvalues for a normalized signature matrix.
We used those eigenvalues to compute the dimensions~$d$ below.
Finally, we remark that \cite[Theorem~1]{Taylor:92} lists additional symmetry groups of affine type (I).
However, each of those groups is contained in an affine symplectic group, and~\cite[Section~3]{Taylor:92} establishes that the corresponding two-graphs are instances of case~(i) below.

\begin{proposition}
\label{prop:real2tran}
If $\mathscr{L}$ is a sequence of $n\geq 2d > 2$ real lines with span $\mathbb{C}^d$ for which $S:=\operatorname{Aut} \mathscr{L}$ is doubly transitive, then one of the following holds:
\begin{itemize}
\item[(I)]
There is a normal elementary abelian subgroup of $S$ that acts regularly on $\mathscr{L}$.
Furthermore, $S$ has a subgroup $G$, where
\begin{itemize}
\item[(i)]
$d = 2^{m-1}(2^m - 1)$, $n = 2^{2m}$, $G = \mathbb{F}_2^{2m} \rtimes \operatorname{Sp}(2m,2)$, $m \neq 1$.
\end{itemize}
\item[(II)]
There is a nonabelian simple group $G$ such that $G\unlhd S\leq\operatorname{Aut}(G)$, where $(d,n,G)$ is one of the following:
\begin{itemize}
\item[(ii)]
$d=(q+1)/2$, $n=q+1$, $G=\operatorname{PSL}(2,q)$, $q$ is a prime power, $q\equiv1\bmod4$.
\item[(iii)]
$d=q^2-q+1$, $n=q^3+1$, $G=\operatorname{PSU}(3,q)$, $q$ is an odd prime power.
\item[(iv)]
$d=q^2-q+1$, $n=q^3+1$, $G={}^2G_2(q)$, $q=3^{2e+1}$.
\item[(v)]
$d=(2^{2m-1}-3\cdot2^{m-1}+1)/3$, $n=2^{2m-1}-2^{m-1}$, $G=\operatorname{Sp}(2m,2)$, $m\geq3$.
\item[(vi)]
$d=(2^{2m-1}+3\cdot2^{m-1}+1)/3$, $n=2^{2m-1}+2^{m-1}$, $G=\operatorname{Sp}(2m,2)$, $m\geq3$.
\item[(vii)]
$d=22$, $n=176$, $G=\operatorname{HS}$ (the Higman--Sims group).
\item[(viii)]
$d=23$, $n=276$, $G=\operatorname{Co}_3$ (the third Conway group).
\end{itemize}
\end{itemize}
Conversely, for each $(n,G)$ of (i)--(viii) there exists a unique dimension $d'$ and a unique unitary equivalence class of $n \geq 2d' > 2$ real lines spanning $\mathbb{C}^{d'}$ whose automorphism group contains $G$.
In particular, $d' = d$ is the dimension listed above.
\end{proposition}

\section{Review of Higman pairs and roux}

In $\mathbb{R}^d$, every sequence of real doubly transitive lines can be viewed as a two-graph.
Similarly in the complex case, we will show that doubly transitive lines derive from a combinatorial object known as a \textbf{roux}.
The authors introduced roux and the related notion of Higman pairs in the first paper of this series~\cite{IversonM:18}.
In this section, we recall some necessary results from~\cite{IversonM:18} and provide a brief summary of association schemes, roux, and Higman pairs.

\subsection{Association schemes}

Recall that a \textbf{$*$-algebra} is a complex algebra $\mathscr{A}$ equipped with an involution $A \mapsto A^*$ such that $(cA+B)^* =\overline{c}A^* + B^*$ and $(AB)^* = B^* A^*$ for every $A,B \in \mathscr{A}$ and $c \in \mathbb{C}$.
An \textbf{association scheme} is a sequence $\{A_i\}_{i\in[k]}$ in $\mathbb{C}^{n\times n}$ with entries in $\{0,1\}$ such that
\begin{itemize}
\item[(A1)]
$A_1=I$,
\item[(A2)]
$\sum_{i\in[k]}A_i=J$ (the matrix of all ones), and
\item[(A3)]
$\mathscr{A}:=\operatorname{span}\{A_i\}_{i\in[k]}$ is a $*$-algebra under matrix multiplication.
\end{itemize}
We refer to $\mathscr{A}$ as the \textbf{adjacency algebra} (or \textit{Bose--Mesner algebra}) of $\{ A_i \}_{i \in [k]}$.
Any matrix $M \in \mathscr{A}$ has a unique expansion $M = \sum_{i \in [k]} c_i A_i$, and $M$ is said to \textbf{carry the scheme} when the coefficients $c_i$ are distinct.
(Then the ``level sets'' of $M$ carve out the adjacency matrices $\{A_i\}_{i\in [k]}$.)
An association scheme is said to be \textbf{commutative} when its adjacency algebra is commutative.
In that case, the spectral theorem provides $\mathscr{A}$ with an alternative basis of orthogonal projections $\{ E_j \}_{j\in [k]}$.
These are called the \textbf{primitive idempotents} of $\mathscr{A}$, since every other projection in $\mathscr{A}$ can be expressed as a sum of primitive idempotents.
Whether or not the scheme is commutative, every orthogonal projection $\mathcal{G} \in \mathscr{A}$ can be factored as $\mathcal{G} = \Phi^* \Phi$ where $\Phi = \{ \varphi_j \}_{j\in [n]}$ is a sequence of equal-norm vectors $ \varphi_j \in \mathbb{C}^d$, $d = \operatorname{rank} \mathcal{G}$.
By collecting the lines spanned by these vectors and eliminating duplicates as necessary, we obtain a sequence $\mathscr{L} = \{ \ell_j \}_{j\in [m]}$ of lines spanning $\mathbb{C}^d$ having representatives $\Phi$ with Gram matrix $\mathcal{G}$.
(Notice that each line may be represented several times in $\Phi$ and in $\mathcal{G}$.)
The resulting lines have been studied in \cite{DelsarteGS:77,IversonJM:19,IversonJM:SPIE:17,IversonJM:17}.

A large class of examples of association schemes arise from permutation groups, as follows.
Let $G$ be a finite group acting transitively on a set $X$ (from the left).
Then $G$ also acts on $X \times X$ through the diagonal action $g\cdot (x,y) := (g\cdot x, g\cdot y)$.
Collect the orbits $\{ R_i \}_{i \in [k]}$ of $G$ on $X \times X$, and let $\{ A_i \}_{i \in [k]}$ be the corresponding indicator matrices in $\mathbb{C}^{X \times X}$, i.e.\ $[A_i]_{x,y} = 1$ if $(x,y) \in R_i$ and $0$ otherwise.
By reindexing if necessary, we may assume that $A_1 = I$.
Then $\{ A_i \}_{i\in [k]}$ is an association scheme, called a \textbf{Schurian} scheme.
The corresponding adjacency algebra $\mathscr{A}$ consists of all $G$-stable matrices, that is, matrices $M \in \mathbb{C}^{X\times X}$ satisfying $M_{g\cdot x, g\cdot y} = M_{x,y}$ for every $g \in G$ and $x,y \in X$.
Any vector sequence $\Phi = \{ \varphi_x \}_{x \in X}$ with $\Phi^* \Phi \in \mathscr{A}$ is called a \textbf{homogeneous frame}, since it inherits symmetries corresponding to the action of $G$ on $X$ (see~\cite{IversonJM:17}).
For an alternative description of the scheme, we may assume without loss of generality that $X = G/H$, where $H \leq G$ is the stabilizer of a point in $X$.
Then the double cosets $\{ H a_i H \}_{i \in [k]}$ of $H$ in $G$ can be indexed in such a way that $[A_i]_{xH,yH} = 1$ if $y^{-1}x \in Ha_i H$ and $0$ otherwise.
We refer to $(G,H)$ as a \textbf{Gelfand pair} when its Schurian scheme is commutative.

\subsection{Roux and roux lines}

Given a finite abelian group $\Gamma$ of order $r$ and a positive integer $n$, we denote $\mathbb{C}[\Gamma]$ for its group algebra over $\mathbb{C}$, and $\lceil \cdot \rfloor \colon \mathbb{C}[\Gamma]^{n\times n} \to \mathbb{C}^{rn \times rn}$ for the injective $\ast$-algebra homomorphism that replaces each element of $\Gamma$ with its $r\times r$ Cayley representation, extended linearly to $\mathbb{C}[\Gamma]$.

\begin{definition}
Given an abelian group $\Gamma$ and a positive integer $n$, an $n\times n$ \textbf{roux} for $\Gamma$ is a matrix $B$ with entries in $\mathbb{C}[\Gamma]$ such that the following hold:
\begin{itemize}
\item[(R1)]
$B_{ii}=0$ for every $i\in[n]$.
\item[(R2)]
$B_{ij}\in\Gamma$ for every $i,j\in[n]$, $i\neq j$.
\item[(R3)]
$B_{ji}=(B_{ij})^{-1}$ for every $i,j\in[n]$, $i\neq j$.
\item[(R4)]
The vector space $\mathscr{A}(B) := \operatorname{span} (\{ gI : g \in \Gamma\} \cup \{ gB : g \in \Gamma \}) \leq \mathbb{C}[\Gamma]^{n\times n}$ is an algebra, i.e., it is closed under matrix multiplication.
\end{itemize}
\end{definition}

Every roux $B \in \mathbb{C}[\Gamma]^{n\times n}$ determines an association scheme (called the \textbf{roux scheme}) with adjacency matrices $\{ \lceil g I \rfloor \}_{g \in \Gamma}$ and $\{ \lceil gB \rfloor \}_{g \in \Gamma}$.

\begin{proposition}[Lemma~2.3 in~\cite{IversonM:18}]
\label{prop.B squared}
Suppose $B\in\mathbb{C}[\Gamma]^{n\times n}$ satisfies (R1)--(R3).
Then $B$ is a roux for $\Gamma$ if and only if
\[
B^2=(n-1)I+\sum_{g\in\Gamma}c_ggB
\]
for some complex numbers $\{c_g\}_{g\in\Gamma}$, called the \textbf{roux parameters} for $B$.
In this case, we necessarily have that $\{c_g\}_{g\in\Gamma}$ are nonnegative integers that sum to $n-2$, with $c_{g^{-1}}=c_g$ for every $g\in\Gamma$.
\end{proposition}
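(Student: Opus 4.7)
The plan is to first show the equivalence of (R4) with the displayed equation, and then read off the stated properties of the coefficients $c_g$.

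For the equivalence, observe that since $\Gamma$ is abelian, each element of $\Gamma$ commutes with every matrix in $\mathbb{C}[\Gamma]^{n\times n}$, so the products $(gI)(hI)=(gh)I$ and $(gI)(hB)=(hB)(gI)=(gh)B$ already lie in $V:=\operatorname{span}\{gI,\,gB:g\in\Gamma\}$. The only further products are $(gB)(hB)=(gh)B^2$, so $V$ is an algebra if and only if $B^2\in V$, i.e.\ $B^2=\sum_g a_g\, gI+\sum_g c_g\, gB$ for some complex coefficients. To pin down $a_g$, use (R1)--(R3) to compute $(B^2)_{ii}=\sum_{k\neq i}B_{ik}B_{ik}^{-1}=(n-1)\cdot 1_\Gamma$. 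Since $(gB)_{ii}=0$ by (R1), matching diagonals forces $\sum_g a_g g=(n-1)\cdot 1_\Gamma$ in $\mathbb{C}[\Gamma]$; linear independence of $\Gamma\subset\mathbb{C}[\Gamma]$ gives $a_{1_\Gamma}=n-1$ and $a_g=0$ for $g\neq 1_\Gamma$.

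For the claims about the $c_g$, fix $i\neq j$. Then $(B^2)_{ij}=\sum_{k\neq i,j}B_{ik}B_{kj}$ is a sum of exactly $n-2$ elements of $\Gamma$, so expanded in the basis $\Gamma\subset\mathbb{C}[\Gamma]$ its coefficients are nonnegative integers summing to $n-2$. On the other hand, $\bigl((n-1)I+\sum_g c_g\, gB\bigr)_{ij}=\sum_g c_g\, gB_{ij}$, and as $g$ ranges over $\Gamma$ so does $gB_{ij}$, so the $\mathbb{C}[\Gamma]$-coefficients of this expression are a reindexing of the sequence $(c_g)_{g\in\Gamma}$. Matching yields $c_g\in\mathbb{Z}_{\geq 0}$ with $\sum_g c_g=n-2$.

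Finally, equip $\mathbb{C}[\Gamma]^{n\times n}$ with the $*$-structure extending $g\mapsto g^{-1}$ antilinearly from $\Gamma$. Then (R1)--(R3) give $B^*=B$, hence $(B^2)^*=B^2$. Applying $*$ to $B^2=(n-1)I+\sum_g c_g\, gB$ (using $c_g\in\mathbb{R}$ and the fact that $\Gamma$ commutes with $B$) yields $B^2=(n-1)I+\sum_g c_g\, g^{-1}B=(n-1)I+\sum_h c_{h^{-1}}\, hB$. Uniqueness of the expansion, which follows from the linear independence of $\{gB\}_{g\in\Gamma}$ in $\mathbb{C}[\Gamma]^{n\times n}$ (clear from any single off-diagonal entry), then forces $c_{g^{-1}}=c_g$.

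No step is a serious obstacle; the only care needed is to distinguish the $\mathbb{C}$-coefficients with respect to $\{gI,\,gB\}$ from the $\mathbb{C}$-coefficients in the basis $\Gamma\subset\mathbb{C}[\Gamma]$ when reading off entries, and to verify uniqueness of the expansion before invoking coefficient matching.
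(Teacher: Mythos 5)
Your proof is correct: the reduction of (R4) to the single condition $B^2\in\operatorname{span}\{gI,gB\}_{g\in\Gamma}$, the diagonal computation via (R1)--(R3) that forces the $I$-part to be $(n-1)I$, the off-diagonal entry count (a sum of exactly $n-2$ group elements, matched against the reindexed sequence $(c_g)$) giving nonnegative integrality and $\sum_g c_g=n-2$, and the $*$-involution argument for $c_{g^{-1}}=c_g$ are all sound, with the uniqueness of the expansion properly justified. Note that the present paper does not prove this statement but imports it from the first installment of the series; your argument is the natural one and is essentially the proof given there.
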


\begin{proposition}[Theorem~2.8 in~\cite{IversonM:18}]
\label{prop.roux primitive idempotents}
Given an $n\times n$ roux $B$ for $\Gamma$ with parameters $\{ c_g \}_{g \in \Gamma}$, the primitive idempotents for the corresponding roux scheme are scalar multiples of
\[
\mathcal{G}_{\alpha}^\epsilon
:=\sum_{g\in\Gamma}\alpha(g)\lceil gI\rfloor+\mu_\alpha^\epsilon\sum_{g\in\Gamma}\alpha(g) \lceil gB \rfloor,
\qquad
\qquad
(\alpha\in\hat\Gamma,~\epsilon\in\{+,-\}),
\]
where $\mu_\alpha^\epsilon$ is defined in terms of the Fourier transform $\hat{c}_\alpha:=\sum_{h\in\Gamma}c_h\overline{\alpha(h)}$ as follows:
\[
\mu_\alpha^\epsilon
:=\frac{\hat{c}_\alpha+\epsilon\sqrt{(\hat{c}_\alpha)^2+4(n-1)}}{2(n-1)}.
\]
The rank of $\mathcal{G}_\alpha^\epsilon$ is
\[
d_\alpha^\epsilon :=\frac{n}{1+(n-1)(\mu_\alpha^\epsilon)^2}.
\]

Furthermore, if $\alpha \in \hat{\Gamma}$, $\mu > 0$, and 
\[ \mathcal{G} := \sum_{g\in\Gamma}\alpha(g)\lceil gI\rfloor+\mu \sum_{g\in\Gamma}\alpha(g) \lceil gB \rfloor \]
satisfies $\mathcal{G}^2 = c \mathcal{G}$ for some $c >0$, then $\mu = \mu_\alpha^\epsilon$ for some $\epsilon \in \{ +, - \}$, and a scalar multiple of $\mathcal{G} = \mathcal{G}_\alpha^\epsilon$ is a primitive idempotent for the roux scheme of $B$.
\end{proposition}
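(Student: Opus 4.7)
The plan is to split $\mathscr{A}(B) \subset \mathbb{C}[\Gamma]^{n\times n}$ into two-dimensional summands using the characters of $\Gamma$, find the idempotents in each summand, and transport the result to $\mathbb{C}^{rn\times rn}$ via $\lceil\cdot\rfloor$. First I would use the decomposition $\mathbb{C}[\Gamma]=\bigoplus_{\alpha\in\hat\Gamma}\mathbb{C} e_\alpha$ with $e_\alpha=r^{-1}\sum_g \overline{\alpha(g)}g$, and introduce the dual sum $f_\alpha := \sum_g \alpha(g) g = r e_{\bar\alpha}$. Then $g f_\alpha = \overline{\alpha(g)} f_\alpha$ and $f_\alpha^2 = r f_\alpha$, so the central matrices $f_\alpha I \in \mathbb{C}[\Gamma]^{n\times n}$ slice $\mathscr{A}(B)$ into the summands $\operatorname{span}\{f_\alpha I, f_\alpha B\}$. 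Each of these is two-dimensional (since $B$ has zero diagonal and hence cannot collapse into a scalar multiple of $I$), so $\dim\mathscr{A}(B) = 2r$.

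Next I would solve for idempotents in each summand using the ansatz $\mathcal{H}_\alpha^\mu := f_\alpha(I+\mu B)$. Combining the roux relation $B^2=(n-1)I + \sum_g c_g gB$ with $f_\alpha g = \overline{\alpha(g)} f_\alpha$ gives $f_\alpha B^2 = f_\alpha((n-1)I + \hat c_\alpha B)$, so squaring produces
\[
(\mathcal{H}_\alpha^\mu)^2 = r(1+(n-1)\mu^2) f_\alpha I + r(2\mu + \hat c_\alpha \mu^2) f_\alpha B.
\]
Matching coefficients of $f_\alpha I$ and $f_\alpha B$ in $(\mathcal{H}_\alpha^\mu)^2 = c\mathcal{H}_\alpha^\mu$ forces $(n-1)\mu^2 - \hat c_\alpha \mu - 1 = 0$. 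Because $c_{g^{-1}}=c_g$ and $\overline{\alpha(g)}=\alpha(g^{-1})$ make $\hat c_\alpha$ real with discriminant $\hat c_\alpha^2 + 4(n-1) > 0$, the quadratic formula delivers the two real solutions $\mu_\alpha^\pm$ listed in the statement, with normalization $c = r(1+(n-1)(\mu_\alpha^\epsilon)^2)$.

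Finally I would apply $\lceil\cdot\rfloor$ to obtain $\mathcal{G}_\alpha^\epsilon = \lceil \mathcal{H}_\alpha^{\mu_\alpha^\epsilon}\rfloor$ and verify that it is a scalar multiple of a primitive idempotent of the roux scheme. Self-adjointness follows from $(\lceil gI\rfloor)^* = \lceil g^{-1}I\rfloor$, $B^*=B$, $\overline{\alpha(g)}=\alpha(g^{-1})$, and $\mu_\alpha^\epsilon\in\mathbb{R}$; orthogonality across different $\alpha$ follows from $f_\alpha f_\beta = r\delta_{\alpha\beta}f_\alpha$, and across the two signs $\epsilon$ from $\mu_\alpha^+\mu_\alpha^- = -1/(n-1)$ (product of roots) combined with $f_\alpha B^2 = f_\alpha((n-1)I+\hat c_\alpha B)$. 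Having exhibited $2r$ nonzero, pairwise orthogonal, self-adjoint scalar multiples of idempotents inside an algebra of dimension $2r$, they must exhaust the primitive idempotents. The rank equals $\operatorname{tr}(\mathcal{G}_\alpha^\epsilon)/c$, and using $\operatorname{tr}\lceil gI\rfloor = nr\,\delta_{g,e}$ (the Cayley representation of $g\neq e$ is a fixed-point-free permutation matrix) and $\operatorname{tr}\lceil gB\rfloor = 0$ (since $B$ has zero diagonal) one obtains $\operatorname{tr}\mathcal{G}_\alpha^\epsilon = nr$, so the rank simplifies to $d_\alpha^\epsilon$. The converse follows from the same coefficient-matching computation: any $\mathcal{G}$ of the prescribed form with $\mathcal{G}^2 = c\mathcal{G}$ and $c,\mu>0$ satisfies the same quadratic, forcing $\mu \in \{\mu_\alpha^+, \mu_\alpha^-\}$.

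The main obstacle is the bookkeeping with characters: tracking the swap between $\alpha$ and $\bar\alpha$ that appears because $\sum_g \alpha(g)g$ is proportional to $e_{\bar\alpha}$ rather than $e_\alpha$, and confirming self-adjointness of $\mathcal{G}_\alpha^\epsilon$ even when $\alpha$ is non-real. The latter ultimately works because conjugating $\alpha(g)$ and replacing $g$ by $g^{-1}$ cancel each other in both $\sum_g \alpha(g)\lceil gI\rfloor$ and $\sum_g \alpha(g)\lceil gB\rfloor$, but one has to notice that this symmetry is imposed by the $\ast$-structure on $\mathbb{C}[\Gamma]^{n\times n}$ rather than by the idempotent decomposition of $\mathbb{C}[\Gamma]$ alone.
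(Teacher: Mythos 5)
This proposition is quoted from the first installment of the series (Theorem~3.8 of~\cite{IversonM:18}); the present paper states it without proof, so there is no in-paper argument to compare against. Your proposal is correct and is essentially the argument one expects behind the cited result: the character sums $f_\alpha = \sum_{g}\alpha(g)g$ split the $2r$-dimensional commutative adjacency algebra into $r$ two-dimensional ideals $\operatorname{span}\{f_\alpha I, f_\alpha B\}$, the relation of Proposition~\ref{prop.B squared} turns $\mathcal{G}^2 = c\,\mathcal{G}$ into the quadratic $(n-1)\mu^2 - \hat{c}_\alpha \mu - 1 = 0$ whose roots are exactly $\mu_\alpha^{\pm}$, and the resulting $2r$ pairwise orthogonal nonzero idempotents must be primitive by a dimension count. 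The finer points you flag all check out: $\hat{c}_\alpha$ is real because $c_{g^{-1}} = c_g$ with the $c_g$ real, self-adjointness of $\mathcal{G}_\alpha^\epsilon$ follows from (R3) together with $\overline{\alpha(g)} = \alpha(g^{-1})$, the two signs are orthogonal because the product of roots is $-1/(n-1)$ and their sum is $\hat{c}_\alpha/(n-1)$, and the rank formula drops out of $\operatorname{tr}\lceil gI\rfloor = nr\,\delta_{g,1}$ and $\operatorname{tr}\lceil gB\rfloor = 0$ divided by the normalization $c = r\bigl(1+(n-1)(\mu_\alpha^\epsilon)^2\bigr)$.
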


Given a finite abelian group $\Gamma$, we write $\hat{\Gamma}$ for the Pontryagin dual group of characters $\alpha \colon \Gamma \to \mathbb{T}$.
Every $\alpha \in \hat{\Gamma}$ extends by linearity to a $\ast$-algebra homomorphism $\overline{\alpha} \colon \mathbb{C}[\Gamma] \to \mathbb{C}$, which in turn extends to a $\ast$-algebra homomorphism $\hat{\alpha} \colon \mathbb{C}[\Gamma]^{n\times n} \to \mathbb{C}^{n\times n}$ through entrywise application of $\overline{\alpha}$.

\begin{proposition}[Theorem~3.1 in~\cite{IversonM:18}]
\label{prop.roux signature}
Suppose $B\in\mathbb{C}[\Gamma]^{n\times n}$ satisfies (R1)--(R3).
Then $B$ is a roux if and only if for every $\alpha\in\hat{\Gamma}$, $\hat\alpha(B)$ is the signature matrix of an equiangular tight frame.
\end{proposition}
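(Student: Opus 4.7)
The plan is to combine Proposition~\ref{prop.B squared}---which recasts the roux property as the quadratic identity $B^2 = (n-1)I + CB$ for some $C \in \mathbb{C}[\Gamma]$---with the standard scalar characterization of ETF signature matrices: a matrix $\mathcal{S}$ satisfying (S1)--(S3) is the signature matrix of an equiangular tight frame if and only if $\mathcal{S}^2 = (n-1)I + \beta\mathcal{S}$ for some real $\beta$. The quick justification is that the Gram matrix $G = I + \mu\mathcal{S}$ of any unit-norm equiangular representatives is tight if and only if $G^2 = (n/d)G$, which expands into a quadratic in $\mathcal{S}$; the coefficient of $I$ is forced to be $n-1$ since this is the common diagonal of $\mathcal{S}^2$, and the remaining coefficient is real because $\mathcal{S}$ is Hermitian with $I$ and $\mathcal{S}$ linearly independent.

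The key bridge is the Fourier/Gelfand transform on $\Gamma$: the assignment $C \mapsto (\overline\alpha(C))_{\alpha\in\hat\Gamma}$ is a $*$-isomorphism $\mathbb{C}[\Gamma] \to \bigoplus_\alpha \mathbb{C}$, which extends entrywise to an injective $*$-algebra homomorphism $\mathbb{C}[\Gamma]^{n\times n} \to \bigoplus_\alpha \mathbb{C}^{n\times n}$ sending $B \mapsto (\hat\alpha(B))_\alpha$. For the forward direction, suppose $B$ is a roux. Then (S1)--(S3) for $\hat\alpha(B)$ follow at once from (R1)--(R3), unimodularity of characters, and $\overline{\alpha(g)} = \alpha(g^{-1})$. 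Applying $\hat\alpha$ to the identity of Proposition~\ref{prop.B squared} yields $\hat\alpha(B)^2 = (n-1)I + \beta_\alpha\hat\alpha(B)$ with $\beta_\alpha := \sum_{g\in\Gamma} c_g\alpha(g)$, and the symmetry $c_{g^{-1}} = c_g$ together with $\overline{\alpha(g)} = \alpha(g^{-1})$ forces $\beta_\alpha \in \mathbb{R}$. The ETF characterization then finishes this direction.

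For the converse, assume each $\hat\alpha(B)$ is an ETF signature matrix, so $\hat\alpha(B)^2 = (n-1)I + \beta_\alpha\hat\alpha(B)$ for some $\beta_\alpha \in \mathbb{R}$. By Fourier inversion on the finite abelian group $\Gamma$, there is a unique $C = \sum_g c_g g \in \mathbb{C}[\Gamma]$ with $\overline\alpha(C) = \beta_\alpha$ for every $\alpha \in \hat\Gamma$. Then $M := B^2 - (n-1)I - CB$ satisfies $\hat\alpha(M) = 0$ for every $\alpha$ by construction, and injectivity of the Gelfand transform forces $M = 0$. A second appeal to Proposition~\ref{prop.B squared} concludes that $B$ is a roux.

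I do not anticipate any substantive obstacle: the proposition is morally the statement that the quadratic identity characterizing rouxes diagonalizes under the Fourier transform on $\Gamma$, with each $\hat\alpha$-component isolating the quadratic identity characterizing an ETF signature matrix. The most delicate bookkeeping concerns the realness of $\beta_\alpha$, which is forced on both sides; this serves as a consistency check rather than a genuine difficulty.
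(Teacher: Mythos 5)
The paper does not actually prove this proposition---it is imported verbatim as Theorem~4.1 of \cite{IversonM:18}---but your argument is correct and takes the natural route that the cited source also follows: apply each character entrywise to the quadratic identity of Proposition~\ref{prop.B squared} and match the $\alpha$-component against the standard quadratic characterization of ETF signature matrices, with Fourier inversion on $\mathbb{C}[\Gamma]$ handling the converse. The one point you flag as delicate, the realness of $\beta_\alpha$ in the converse direction, is indeed automatic from (S3), since $\beta_\alpha \hat\alpha(B)$ must be Hermitian and $\hat\alpha(B) \neq 0$, so nothing is lost when you invert the transform.
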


Any sequence of lines arising from an application of Proposition~\ref{prop.roux signature} are called \textbf{roux lines}.
We will see that every sequence of doubly transitive lines is roux.
While it will not play a role in the sequel, we also remark that every \textit{regular abelian distancer-regular antipodal cover of the complete graph} (\textsc{drackn}) can be viewed as an instance of a roux, as explained in~\cite{IversonM:18}, and the resulting roux lines coincide with the construction of ETFs from regular abelian \textsc{drackn}s in \cite{GodsilHensel:92,CoutinhoGSZ:16}. 

We now have two constructions of lines from roux: one set from from the primitive idempotents in Proposition~\ref{prop.roux primitive idempotents}, and another from Proposition~\ref{prop.roux signature}.
However, the resulting lines are identical.
Indeed, for any choice of $\alpha \in \hat{\Gamma}$ and $\epsilon \in \{1,-1\}$, it holds that $[\mathcal{G}_{\alpha^{-1}}^\epsilon]_{(i,1),(j,1)} = \mu_{\alpha}^\epsilon \alpha(B_{ij})=\epsilon \alpha(B_{ij}) |\mu_{\alpha}^\epsilon|$ whenever $i \neq j$.
This yields the following version of Lemma~3.2 in~\cite{IversonM:18}.

\begin{proposition}
\label{prop.two reps of same lines}
For each $\alpha\in\hat\Gamma$ and $\epsilon \in \{1,-1\}$, the signature matrix $\epsilon \hat{\alpha}(B)$ from Proposition~\ref{prop.roux signature} and the Gram matrix $\mathcal{G}_{\alpha^{-1}}^\epsilon$ from Proposition~\ref{prop.roux primitive idempotents} describe the same lines (each line implicated by the former is represented $|\Gamma|$ times in the latter).
\end{proposition}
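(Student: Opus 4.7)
The plan is to apply the Fourier transform on $\mathbb{C}[\Gamma]$ to block-diagonalize the Cayley embedding $\lceil \cdot \rfloor$, thereby collapsing $\mathcal{G}_{\alpha^{-1}}^+$ to a single $n \times n$ block directly controlled by $\hat{\alpha}(B)$. Identify $\mathbb{C}^{rn}$ with $\mathbb{C}[\Gamma] \otimes \mathbb{C}^n$ and use the Pontryagin dual basis of $\mathbb{C}[\Gamma]$ on which each $g \in \Gamma$ acts as the scalar $\overline{\beta(g)}$. In this Fourier basis, every $\lceil M \rfloor$ with $M \in \mathbb{C}[\Gamma]^{n \times n}$ becomes block-diagonal with $\beta$-block equal to $\hat{\beta}(M)$. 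Applying this to $\mathcal{G}_{\alpha^{-1}}^+$, each $\beta$-block collapses to
\[
\Bigl(\sum_{g \in \Gamma} (\alpha^{-1}\beta)(g)\Bigr)\bigl(I + \mu_{\alpha^{-1}}^+\,\hat{\beta}(B)\bigr),
\]
which by character orthogonality vanishes unless $\beta = \alpha$, in which case it equals $|\Gamma|\,\bigl(I + \mu_{\alpha^{-1}}^+\,\hat{\alpha}(B)\bigr)$.

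Next, I would identify this unique nonzero block with an ETF Gram matrix. By Proposition~\ref{prop.roux signature}, $\hat{\alpha}(B)$ is the signature matrix of an ETF $\Psi = \{\psi_i\}_{i \in [n]}$, so $\Psi^*\Psi = I + \mu'\,\hat{\alpha}(B)$ for some $\mu' > 0$. Demanding that this be a positive multiple of a projection yields a quadratic in $\mu'$ via the defining relation $B^2 = (n-1)I + \sum_g c_g gB$ from Proposition~\ref{prop.B squared}; this quadratic coincides with the one solved by $\mu_\alpha^\pm$ in Proposition~\ref{prop.roux primitive idempotents}, and since $c_g = c_{g^{-1}}$ gives $\hat{c}_{\alpha^{-1}} = \hat{c}_\alpha$, we obtain $\mu' = \mu_\alpha^+ = \mu_{\alpha^{-1}}^+$. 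Hence the nonzero Fourier block equals precisely $|\Gamma|\,\Psi^*\Psi$.

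Finally, I would pull the factorization back. The block-diagonal form factors as $\tilde{\Phi}^*\tilde{\Phi}$ with $\tilde{\Phi}_{(\alpha, i)} = \sqrt{|\Gamma|}\,\psi_i$ and all other columns zero. A short change-of-basis calculation through the inverse Fourier transform (whose $(\beta, g)$-coefficient is $\overline{\beta(g)}/\sqrt{|\Gamma|}$) shows that the corresponding factorization $\mathcal{G}_{\alpha^{-1}}^+ = \Phi^*\Phi$ has columns of the form $\Phi_{(i, g)} = \overline{\alpha(g)}\,\psi_i$. For each fixed $i$, as $g$ ranges over $\Gamma$, these $|\Gamma|$ columns are unimodular scalar multiples of $\psi_i$ and hence span the same line. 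Consequently the $|\Gamma|n$ columns of $\Phi$ span exactly the $n$ lines of $\Psi$, which are the lines described by $\hat{\alpha}(B)$, each represented $|\Gamma|$ times. The main obstacle is conventions tracking: explaining why the signature index $\alpha$ pairs with the Gram-matrix index $\alpha^{-1}$ (rather than $\alpha$ itself) and why the two $\mu$-parameters coincide, both of which reduce to the character orthogonality above together with the palindrome identity $c_g = c_{g^{-1}}$.
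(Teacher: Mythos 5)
Your argument is correct: the Fourier transform on the $\mathbb{C}[\Gamma]$ tensor factor block-diagonalizes $\lceil\cdot\rfloor$, character orthogonality kills every block except the one indexed by $\alpha$, the palindrome identity $c_{g^{-1}}=c_g$ gives $\hat{c}_{\alpha^{-1}}=\hat{c}_\alpha$ and hence $\mu_{\alpha^{-1}}^+=\mu_\alpha^+$, and pulling the factorization back through the unitary Fourier matrix exhibits the columns of a Gram factorization of $\mathcal{G}_{\alpha^{-1}}^+$ as $\overline{\alpha(g)}\psi_i$, i.e.\ $|\Gamma|$ unimodular multiples of each ETF vector. Note that this paper does not reprove the statement --- it is imported as Lemma~4.2 of the first installment --- but your route is the natural one and matches the conventions used here; the only blemish is a harmless $\beta\leftrightarrow\beta^{-1}$ labeling slip (if $g$ acts as $\overline{\beta(g)}$ on the $\beta$-eigenvector, the $\beta$-block of $\lceil M\rfloor$ is $\widehat{\beta^{-1}}(M)$ rather than $\hat{\beta}(M)$), which you already flag and which does not affect the conclusion.
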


One can easily determine whether or not roux lines are real with the following result.

\begin{proposition}[Real roux lines detector, Corollary~3.8 in~\cite{IversonM:18}]
\label{prop.real line detector}
Let $B$ be a roux for $\Gamma$ and pick $\alpha\in\hat\Gamma$.
Then $\hat\alpha(B)$ is a signature matrix of real lines if and only if $\alpha(g)$ is real for every $g\in\Gamma$ such that $c_g\neq0$.
\end{proposition}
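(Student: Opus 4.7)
The plan is to reduce ``$\hat\alpha(B)$ is the signature matrix of real lines'' to a statement about triple products of entries of $B$, and then use the roux relation to identify those triple products with the support of $\{c_g\}_{g\in\Gamma}$.

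First, recall from the discussion following Definition~\ref{def:real lines} that a sequence of linearly dependent equiangular lines is real exactly when its normalized signature matrix is real-valued, which amounts to $S := \hat\alpha(B)$ being switching equivalent to some real matrix. Since every off-diagonal entry of $S$ has modulus one, I would prove that this happens precisely when every triple product $S_{ij}S_{jk}S_{ki}$ (for distinct $i,j,k$) is real. The forward direction is immediate, because triple products are invariant under $S \mapsto D^{-1}SD$. For the converse, set $\omega_1 = 1$ and $\omega_j = \overline{S_{1j}}$ for $j>1$, and check using $S^* = S$ that $(D^{-1}SD)_{ij} = S_{1i}S_{ij}S_{j1}$, which is a triple product and hence real by hypothesis. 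This is the phase-adjustment trick behind Lemma~\ref{lem:orthogonal equiv}, simplified by the fact that every off-diagonal entry of $S$ is now nonzero, so the analogous frame graph is complete.

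Next I would compute these triple products. Since $\hat\alpha$ is a $\ast$-algebra homomorphism restricting to $\alpha$ on $\Gamma$, we have $S_{ij}S_{jk}S_{ki} = \alpha(B_{ij}B_{jk}B_{ki})$ for distinct $i,j,k$, so I need to identify which elements of $\Gamma$ arise as such products. Fix distinct $i,j$ and compare the $(j,i)$ entries of the roux identity $B^2 = (n-1)I + \sum_g c_g g B$ from Proposition~\ref{prop.B squared}: the left side yields $\sum_{k\ne i,j} B_{jk}B_{ki}$, while the right side yields $\sum_g c_g g B_{ji}$. Multiplying on the right by $B_{ij} = B_{ji}^{-1}$ and using commutativity of $\Gamma$, one obtains
\[ \sum_{k \ne i,j} B_{ij}B_{jk}B_{ki} = \sum_{g\in\Gamma} c_g\, g \qquad \text{in } \mathbb{C}[\Gamma]. \]
Since the $c_g$ are nonnegative integers (Proposition~\ref{prop.B squared} again), comparing coefficients shows that the multiset $\{B_{ij}B_{jk}B_{ki} : k \ne i,j\}$ contains each $g$ with multiplicity $c_g$. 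In particular, as $i,j,k$ range over all distinct triples, the values $B_{ij}B_{jk}B_{ki}$ run through exactly the set $\{g \in \Gamma : c_g \ne 0\}$.

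Combining the two steps yields the proposition: $\hat\alpha(B)$ is the signature matrix of real lines iff every $\alpha(B_{ij}B_{jk}B_{ki})$ is real, iff $\alpha(g) \in \mathbb{R}$ for every $g$ with $c_g \ne 0$. The only nontrivial ingredient is the equivalence ``switching equivalent to a real matrix iff all triple products are real,'' but the short phase-adjustment argument above handles it using only $|S_{ij}|=1$ and $S^*=S$; the rest is a direct algebraic computation from Proposition~\ref{prop.B squared}, so I do not foresee a serious obstacle.
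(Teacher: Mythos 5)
Your argument is correct. Note first that this paper does not actually prove Proposition~\ref{prop.real line detector}: it is imported verbatim as Corollary~4.8 of the first installment \cite{IversonM:18}, so there is no in-paper proof to compare against. Your derivation is therefore a self-contained alternative, and a clean one: it needs only the identity $B^2=(n-1)I+\sum_g c_g gB$ from Proposition~\ref{prop.B squared}, plus the elementary fact (the same phase-adjustment trick as in Lemma~\ref{lem:orthogonal equiv}, trivialized because all off-diagonal entries of $\hat\alpha(B)$ are unimodular) that a Hermitian signature matrix is switching equivalent to a real one iff all triple products $S_{ij}S_{jk}S_{ki}$ are real. Your key computation is right: for fixed $i\neq j$, comparing $(j,i)$ entries of the roux relation and multiplying by $B_{ij}=B_{ji}^{-1}$ gives $\sum_{k\neq i,j}B_{ij}B_{jk}B_{ki}=\sum_g c_g\,g$ in $\mathbb{C}[\Gamma]$, and since each summand on the left is a group element and the $c_g$ are nonnegative integers, the set of triple products is exactly $\{g:c_g\neq 0\}$ --- in fact already for each fixed pair $(i,j)$, which gives both inclusions needed for the two directions of the equivalence. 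Two minor points worth making explicit if you write this up: (i) you are implicitly using the paper's (stated but unproved) equivalence ``lines real $\iff$ normalized signature matrix real,'' which is what ties switching equivalence to Definition~\ref{def:real lines}; and (ii) the case $n=2$ is vacuous on both sides since then $\sum_g c_g=0$. What your route buys is transparency: it exhibits the roux parameters as recording precisely the triple products of the associated signature matrices, which is the exact complex analogue of the triple-product/two-graph correspondence of Section~2.
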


Finally, there are several trivial ways to deform roux, one of which is described below.

\begin{proposition}[Lemma~2.5(a) in~\cite{IversonM:18}]
\label{prop.switching equivalent roux}
Let $B \in \mathbb{C}[\Gamma]^{n\times n}$ be a roux with parameters $\{c_g \}_{g \in \Gamma}$.
Given any diagonal matrix $D \in \mathbb{C}[\Gamma]^{n\times n}$ with $D_{ii} \in \Gamma$ for every $i \in [n]$, then $DBD^{-1}$ is a roux with parameters $\{c_g\}_{g\in \Gamma}$.
\end{proposition}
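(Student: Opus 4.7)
The plan is to verify the four axioms (R1)--(R4) for $B' := DBD^{-1}$ by direct entrywise computation, using crucially that $\Gamma$ is abelian so that $\mathbb{C}[\Gamma]$ is a commutative ring, and in particular $D$ commutes with every scalar $g \in \Gamma$ regarded as an element of $\mathbb{C}[\Gamma]^{n\times n}$. Since $D$ is diagonal, conjugation acts entrywise: $B'_{ij} = D_{ii} B_{ij} D_{jj}^{-1}$.

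First I would dispatch (R1)--(R3) one at a time. For (R1), $B'_{ii} = D_{ii} \cdot 0 \cdot D_{ii}^{-1} = 0$. For (R2), if $i \neq j$ then $B_{ij} \in \Gamma$ and $D_{ii}, D_{jj}^{-1} \in \Gamma$, so their product $B'_{ij}$ lies in $\Gamma$ (using that $\Gamma$ is a group). For (R3), commutativity of $\mathbb{C}[\Gamma]$ gives
\[
B'_{ji} = D_{jj} B_{ji} D_{ii}^{-1} = D_{jj} (B_{ij})^{-1} D_{ii}^{-1} = (D_{ii} B_{ij} D_{jj}^{-1})^{-1} = (B'_{ij})^{-1}.
\]

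For (R4), together with the claim that $B'$ has the same roux parameters, I would invoke Proposition~\ref{prop.B squared}: it suffices to show that
\[
(B')^2 = (n-1) I + \sum_{g \in \Gamma} c_g \, g B'.
\]
Since $D$ is diagonal, $(B')^2 = DBD^{-1} \cdot DBD^{-1} = DB^2 D^{-1}$. Using the given identity $B^2 = (n-1)I + \sum_g c_g \, g B$ and the fact that the scalar $g \in \mathbb{C}[\Gamma]$ commutes with every entry of $D$, we conclude
\[
DB^2 D^{-1} = (n-1)\, D I D^{-1} + \sum_{g \in \Gamma} c_g\, g\, D B D^{-1} = (n-1) I + \sum_{g \in \Gamma} c_g\, g B',
\]
as desired. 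The resulting algebra $\mathscr{A}(B')$ is then spanned by $\{gI\}_{g \in \Gamma}$ and $\{gB'\}_{g \in \Gamma}$, confirming (R4) and exhibiting the same parameters $\{c_g\}_{g \in \Gamma}$.

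There is no genuine obstacle here; the only point that deserves attention is making sure every step uses commutativity of $\mathbb{C}[\Gamma]$ correctly, since otherwise pushing the scalars $g$ past $D$ would not be legitimate. Once this is made explicit, the proof reduces to the three-line computation above.
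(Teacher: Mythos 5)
Your proof is correct and complete: the entrywise verification of (R1)--(R3) is routine, and reducing (R4) together with the preservation of the parameters to the identity $(B')^2=(n-1)I+\sum_{g}c_g\,gB'$ via Proposition~\ref{prop.B squared} is exactly the right move, with commutativity of $\mathbb{C}[\Gamma]$ correctly flagged as the point that lets the scalars $g$ pass through $D$. Note that the present paper does not prove this statement itself but imports it as Lemma~3.5(a) of~\cite{IversonM:18}; your argument is the natural one and matches the intended route through the quadratic characterization of roux.
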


The roux $B$ and $DBD^{-1}$ in Proposition~\ref{prop.switching equivalent roux} are called \textbf{switching equivalent}.
Given any such $B$ and $D$ and any $\alpha \in \widehat{\Gamma}$, it is easy to see that the signature matrices $\hat{\alpha}(B)$ and $\hat{\alpha}(DBD^{-1})$ are switching equivalent.
Consequently, switching equivalent roux create the same roux lines.

\subsection{Schurian roux schemes and Higman pairs}

\begin{definition}
Given a finite group $G$ and a proper subgroup $H\leq G$, let $K:=N_G(H)$ be the normalizer of $H$ in $G$.
We say $(G,H)$ is a \textbf{Higman pair} if there exists a \textbf{key} $b\in G\setminus K$ such that
\begin{itemize}
\item[(H1)]
$G$ acts doubly transitively on $G/K$,
\item[(H2)]
$K/H$ is abelian,
\item[(H3)]
$HbH=Hb^{-1}H$,
\item[(H4)]
$aba^{-1}\in HbH$ for every $a\in K$, and
\item[(H5)]
$a \in K$ satisfies $ab \in HbH$ only if $a \in H$.
\end{itemize}
\end{definition}

The relationship between roux and Higman pairs is given below and depicted in Figure~\ref{fig:roux}.

\begin{proposition}[Theorem~2.1 in~\cite{IversonM:18}]
\label{prop.schurian roux}
Let $G$ be a finite group, and pick $H\leq G$.
The Schurian scheme of $(G,H)$ is isomorphic to a roux scheme if and only if $(G,H)$ is a Higman pair.
\end{proposition}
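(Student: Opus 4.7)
The plan is to establish a dictionary between Higman pair data $(G,H,b)$ and a roux $B \in \mathbb{C}[\Gamma]^{n\times n}$ with $n = [G:N_G(H)]$ and $\Gamma = N_G(H)/H$, in both directions of the biconditional.

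For the forward direction, set $K = N_G(H)$, so $\Gamma = K/H$ is abelian by (H2), and let $n = [G:K]$. The first step is to classify $H\backslash G/H$ into two families of size $|\Gamma|$ each. Since $H \unlhd K$, the double cosets inside $K$ coincide with single cosets and are indexed by $\Gamma$. By (H1), the remaining double cosets lie in $KbK = G \setminus K$; using (H4) I would show every such double coset has the form $HbaH$ for some $a \in K$, and using (H5) (together with $H \unlhd K$) that $aH \mapsto HbaH$ is a bijection $\Gamma \to H\backslash KbK/H$. Fixing coset representatives $\{x_i\}_{i=1}^n$ for $G/K$ with $x_1 = 1$ and identifying $G/H \leftrightarrow [n] \times \Gamma$, I would define $B_{ii} = 0$ and, for $i \neq j$, $B_{ij} = aH$, where $a \in K$ is the unique element mod $H$ with $x_i^{-1} x_j \in HbaH$. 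Properties (R1)--(R3) then follow from the double-coset classification together with (H3). For (R4), I would expand $(B^2)_{ij}$ as a sum over intermediate $k \in [n]$, translate to a count over $G$, and use the double transitivity in (H1) to conclude that the coefficients depend only on whether $i = j$; Proposition~\ref{prop.B squared} then certifies that $B$ is a roux.

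For the backward direction, an isomorphism $\varphi$ between the Schurian scheme of $(G,H)$ and a roux scheme partitions the $2|\Gamma|$ adjacency matrices into a scalar part $\{\varphi^{-1}(\lceil gI \rfloor)\}_{g \in \Gamma}$ and a non-scalar part $\{\varphi^{-1}(\lceil gB \rfloor)\}_{g \in \Gamma}$. The scalar part multiplies according to $A_g A_h = A_{gh}$, so its supports correspond to $H$-double cosets that multiply as a group; this forces each such double coset to equal a single $H$-coset, so $H$ is normal in the subgroup $L$ that they union to, with $L/H \cong \Gamma$ abelian. Hence $L \leq N_G(H)$ and (H2) holds. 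The non-scalar part then partitions $G \setminus L$ into $|\Gamma|$ double cosets that are $L$-$L$ mutually equivalent, showing that $G \setminus L$ is a single $L$-$L$ double coset and giving (H1). Equality $L = N_G(H)$ is extracted from the commutativity of $\mathbb{C}[\Gamma]$ inside the adjacency algebra $\mathscr{A}(B)$. Choosing any $b$ in the support of $\varphi^{-1}(B)$, (H3) follows from (R3), (H4) from the $L$-conjugation invariance of each $\lceil gB \rfloor$'s support, and (H5) from the injectivity of $aH \mapsto HabH$.

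The hardest step is the backward direction, specifically pinning down $L = N_G(H)$. The upper bound is immediate from the group-like multiplication in the scalar part, but the lower bound $L \supseteq N_G(H)$ requires showing that every element normalizing $H$ already lives in $L$, which uses the commutativity of $\mathbb{C}[\Gamma]$ inside $\mathscr{A}(B)$ in an essential way. Once $L = N_G(H)$ is secured, (H4) and (H5) fall out mechanically by tracing the $\Gamma$-labels of $B$ back through the bijection of double cosets established in the forward direction.
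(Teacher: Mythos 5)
This proposition is imported verbatim from the first installment (Theorem~3.1 of~\cite{IversonM:18}); the present paper states it without proof, so there is no in-paper argument to compare against. That said, your dictionary is essentially the one the series relies on: the forward direction you describe --- splitting $H\backslash G/H$ into the $|\Gamma|$ thin classes inside $K=N_G(H)$ and the $|\Gamma|$ classes $\{Ha_gbH\}_{g\in\Gamma}$ outside, then reading off $B_{ij}$ from the double coset of $x_i^{-1}x_j$ --- is precisely the construction recorded here as Proposition~\ref{prop.higman's roux}, and your backward direction correctly recovers (H1)--(H5) from the two families of roux classes. Two remarks. First, for (R4) you do not need the counting argument over intermediate indices $k$: once $G/H$ is identified with $[n]\times\Gamma$ so that the $2|\Gamma|$ Schurian adjacency matrices become $\{\lceil gI\rfloor\}_{g\in\Gamma}\cup\{\lceil gB\rfloor\}_{g\in\Gamma}$, closure of the Schurian adjacency algebra under multiplication \emph{is} (R4), and Proposition~\ref{prop.B squared} becomes a consequence rather than an input. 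Second, your diagnosis of where commutativity enters is off. The inclusion $N_G(H)\subseteq L$ follows from valency preservation under the scheme isomorphism: $HaH$ has valency $1$ if and only if $a\in N_G(H)$, and (for $n\geq 3$) the only valency-$1$ classes of a roux scheme are the $\lceil gI\rfloor$, so every normalizing element must land in your scalar part. Where commutativity of $\Gamma$ is genuinely essential is (H4): conjugating $\lceil B\rfloor$ by $\lceil gI\rfloor$ returns $\lceil gBg^{-1}\rfloor=\lceil B\rfloor$ only because $\Gamma$ is abelian, and translating back through the isomorphism this is exactly the identity $Ha_gba_g^{-1}H=HbH$. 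With that relabeling of which step uses which hypothesis, your outline is a correct proof sketch of both directions.
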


\begin{proposition}[Lemma~2.6 in~\cite{IversonM:18}]
\label{prop.double cosets}
Given a Higman pair $(G,H)$, denote $K:=N_G(H)$, $n:=[G:K]$ and $r:=[K:H]$, and select any key $b\in G\setminus K$.
Then the following hold:
\begin{itemize}
\item[(a)]
$H$ has $2r$ double cosets in $G$: $r$ of the form $aH$, and $r$ of the form $HabH$ for some $a\in K$;
\item[(b)]
for every $a\in K$, we have $HabH=HbaH$; and
\item[(c)]
for every $a\in K$, we have $|HabH|=(n-1)|H|$.
\end{itemize}
\end{proposition}

\begin{proposition}[Roux from Higman pairs, Lemma~2.7 in~\cite{IversonM:18}]
\label{prop.higman's roux}
Given a Higman pair $(G,H)$, denote $K:=N_G(H)$ and $n:=[G:K]$, and select any key $b\in G\setminus K$.
Choose left coset representatives $\{x_j\}_{j\in[n]}$ for $K$ in $G$, and choose coset representatives $\{a_g\}_{g\in K/H}$ for $H$ in $K$.
Define $B\in\mathbb{C}[K/H]^{n\times n}$ entrywise as follows:
Given $i\neq j$, let $B_{ij}$ be the unique $g\in K/H$ for which $x_i^{-1}x_j\in Ha_gbH$, and set $B_{ii}=0$.
Then $B$ is a roux for $K/H$ with roux parameters $\{c_g\}_{g\in K/H}$ given by
\begin{equation}
\label{eq.higman's roux}
c_g
=\frac{n-1}{|H|}\cdot|bHb^{-1}\cap Ha_gbH|.
\end{equation}
Furthermore, the roux scheme generated by $B$ is isomorphic to the Schurian scheme of $(G,H)$.
\end{proposition}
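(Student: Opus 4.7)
The plan is to prove the four claims of the proposition in sequence: well-definedness of $B$, axioms (R1)--(R3), the scheme isomorphism (which yields (R4) as a byproduct), and the formula for the roux parameters. For well-definedness, I use that (H1) forces $K \backslash G / K = \{K, KbK\}$, so $x_i^{-1}x_j \in KbK$ whenever $i \neq j$. I then show $KbK = \bigsqcup_{g \in K/H} Ha_gbH$: the union follows from $H \trianglelefteq K$ (so $aH = Ha$ for $a \in K$) together with the identity $aHbH = HbHa$ for $a \in K$, which is an immediate consequence of (H4) (since $aHbHa^{-1} \subseteq H \cdot HbH \cdot H = HbH$ and both sides have the same cardinality). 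Disjointness uses (H5): if $Ha_gbH = Ha_{g'}bH$, a short manipulation yields $(a_{g'}^{-1}a_g) \cdot b \in HbH$ with $a_{g'}^{-1}a_g \in K$, and (H5) forces $a_{g'}^{-1}a_g \in H$. Axioms (R1)--(R2) are immediate, and (R3) follows from $(Ha_gbH)^{-1} = Hb^{-1}a_g^{-1}H = Ha_{g^{-1}}bH$, using (H3) and the commutation $H \cdot a = a \cdot H$ for $a \in K$.

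Next I would exhibit the isomorphism between the Schurian scheme of $(G, H)$ and the roux scheme of $B$, which yields (R4) as a byproduct. Using $G = \bigsqcup_j x_j K$ and $K = \bigsqcup_g a_g H$, define a bijection $\phi \colon G/H \to [n] \times (K/H)$ by $\phi(x_j a_g H) = (j, g)$. The $2r$ double cosets of $H$ in $G$ are $\{a_g H\}_g$ inside $K$ and $\{Ha_g bH\}_g$ inside $KbK$ (from the previous paragraph). Under $\phi$, the adjacency matrix of $a_g H$ matches $\lceil gI \rfloor$ (a short computation using that $K/H$ is abelian and the Cayley representation), while the adjacency matrix of $Ha_g bH$ matches $\lceil gB \rfloor$ via the key identity $a_t H a_g b H a_s^{-1} = H a_{tgs^{-1}} bH$, derived by pushing $H$-factors outside and applying $HbH \cdot a^{-1} = a^{-1} \cdot HbH$ for $a \in K$ (which itself follows from (H4)). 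Closure of the Schurian adjacency algebra under multiplication transfers to the span of the roux adjacency matrices, establishing (R4) and identifying the two schemes.

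Finally, I would extract the formula for $c_g$. Under the scheme isomorphism, $c_g$ equals the Schurian structure constant $p^{Ha_g bH}_{HbH, HbH}$, which standard bookkeeping rewrites as $|H|^{-1}|\{z \in HbH : z^{-1} a_g b \in HbH\}|$. To evaluate the count, I would parameterize $z^{-1} a_g b = h_3 b h_4$ and substitute $v = b h_4^{-1} b^{-1} \in bHb^{-1}$; then $z = a_g b h_4^{-1} b^{-1} h_3^{-1}$, and the condition $z \in HbH$ translates to $v \in H a_{g^{-1}} bH$. The number of pairs $(h_3, h_4) \in H \times H$ subject to this is $|H| \cdot |bHb^{-1} \cap H a_{g^{-1}} bH|$. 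On the other hand, each $z$ in the desired set corresponds to $|H \cap bHb^{-1}|$ such pairs, and the size formula $|H \cap bHb^{-1}| = |H|/(n-1)$ (obtained by comparing $|KbK| = (n-1)|K|$ with $|KbK| = r \cdot |H|^2 / |H \cap bHb^{-1}|$) gives $c_g = (n-1)|H|^{-1} |bHb^{-1} \cap H a_{g^{-1}} bH|$. Inversion symmetry of the sets $bHb^{-1}$ and $Ha_g bH \leftrightarrow Ha_{g^{-1}} bH$ yields the stated formula. The main obstacle is precisely this counting argument: correctly tracking multiplicities in the factorization $z = h_1 b h_2$ and choosing the right change of variables to convert the abstract Schurian intersection number into the concrete double-coset intersection $|bHb^{-1} \cap Ha_gbH|$.
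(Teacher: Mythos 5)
This proposition is quoted from the first paper in the series (it is Lemma~3.7 of \cite{IversonM:18}), and the present paper contains no proof of it to compare against; so I can only assess your argument on its own terms. It checks out. The double-coset decomposition $KbK=\bigsqcup_{g\in K/H}Ha_gbH$ is correctly derived from $H\trianglelefteq K$ together with the conjugation identity $aHbHa^{-1}=HbH$ (a consequence of (H4) plus a cardinality count), with disjointness coming from (H5) exactly as you describe; (R3) via (H3) and the same commutation is fine; and routing (R4) through the isomorphism with the Schurian scheme is the natural move, since the adjacency algebra of the latter is closed under multiplication by construction. The counting argument for $c_g$ is the only delicate step, and you handle it correctly: the double count of triples $(z,h_3,h_4)$ with $z^{-1}a_gb=h_3bh_4$ gives $|H\cap bHb^{-1}|$ on one side (the number of factorizations of a fixed element of $HbH$) and $|H|\cdot|bHb^{-1}\cap Ha_{g^{-1}}bH|$ on the other, and the index computation $|H\cap bHb^{-1}|=|H|/(n-1)$ from $|KbK|=(n-1)|K|$ is exactly right. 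The apparent discrepancy between $a_{g^{-1}}$ and $a_g$ in the final formula is genuinely resolved by inversion symmetry of $bHb^{-1}$ together with $(Ha_gbH)^{-1}=Ha_{g^{-1}}bH$, consistent with the constraint $c_{g^{-1}}=c_g$ recorded in Proposition~\ref{prop.B squared}. I see no gap.
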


We emphasize that a Higman pair $(G,H)$ does not, in general, carry a unique key, and distinct keys may create distinct roux through Proposition~\ref{prop.higman's roux}.
However, for any choice of key the resulting roux scheme is isomorphic to the Schurian scheme of $(G,H)$.
Since roux lines correspond with primitive idempotents in the roux scheme, their ranks do not depend on the choice of key.
In particular, the ranks of the primitive idempotents for the Schurian scheme of $(G,H)$ can be computed using~\eqref{eq.higman's roux} and Proposition~\ref{prop.roux primitive idempotents}.

\begin{figure}
\begin{center}
\begin{tabular}{ll}
\begin{tikzpicture}[scale=0.8]
\coordinate (0) at (0,0);
\coordinate (1) at (-1.5,0);
\coordinate (2) at (1.5,0);
\coordinate (3) at (0,3.23);
\coordinate (4) at (2.5,0.23);
\coordinate (5) at (-2.5,0.23);
\coordinate (6) at (-2.5,-0.23);
\coordinate (7) at (0,0.23);
\coordinate (8) at (0,-0.23);
\coordinate (9) at (2.5,-0.2);
\draw [thick] (-4.5,-3) -- (-4.5,4) -- (4.5,4) -- (4.5,-3) -- (-4.5,-3);
\draw [thick] (1) circle [radius = 2.5];
\draw [thick] (2) circle [radius = 2.5];
\node at (3){\small{association schemes}};
\node at (4){\small{roux}};
\node at (9){\small{schemes}};
\node at (5){\small{Schurian}};
\node at (6){\small{schemes}};
\node at (7){\small{Higman}};
\node at (8){\small{pairs}};
\end{tikzpicture}
\qquad
\qquad
\begin{tikzpicture}[scale=0.8]
\coordinate (0) at (0,0);
\coordinate (1) at (-1.5,0);
\coordinate (2) at (1.5,0);
\coordinate (3) at (0,3.23);
\coordinate (4) at (2.5,0.23);
\coordinate (5) at (-2.5,0.23);
\coordinate (6) at (-2.5,-0.23);
\coordinate (7) at (0,0.46);
\coordinate (8) at (0,0);
\coordinate (9) at (0,3.1);
\coordinate (10) at (0,-0.46);
\coordinate (11) at (2.5,-0.23);
\draw [thick] (-4.5,-3) -- (-4.5,4) -- (4.5,4) -- (4.5,-3) -- (-4.5,-3);
\draw [thick] (1) circle [radius = 2.5];
\draw [thick] (2) circle [radius = 2.5];
\node at (3){\small{projections in adjacency algebras}};
\node at (4){\small{roux}};
\node at (11){\small{lines}};
\node at (5){\small{homogeneous}};
\node at (6){\small{frames}};
\node at (7){\small{doubly}};
\node at (8){\small{transitive}};
\node at (10){\small{lines}};
\end{tikzpicture}
\end{tabular}
\end{center}
\caption{
\label{figure.venn}
{\small 
(left) The relationship between roux and Higman pairs, as described by Proposition~\ref{prop.schurian roux}.
(right) The dual relationships for lines, as given by Theorem~\ref{thm.Higman Pair Theorem}.
}\normalsize}
\label{fig:roux}
\end{figure}

\section{Radicalization and the Higman Pair Theorem}

As mentioned in the introduction, every sequence of linearly dependent doubly transitive lines is determined by a Higman pair.
This fact will enable us to classify doubly transitive lines that exhibit almost simple symmetries.
The following result makes this pivotal correspondence explicit.

\begin{theorem}[Higman Pair Theorem, Theorem~1.3 in~\cite{IversonM:18}]\
\label{thm.Higman Pair Theorem}
\begin{itemize}
\item[(a)]
Assume $n\geq 3$.
Given $n>d$ doubly transitive lines that span $\mathbb{C}^d$, there exists $r$ such that one may select $r$ equal-norm representatives from each of the $n$ lines that together carry the association scheme of a Higman pair $(G,H)$ with $r=[N_G(H):H]$ and $n=[G:N_G(H)]$.
Moreover, their Gram matrix is a primitive idempotent for this scheme.
\item[(b)]
Every Higman pair $(G,H)$ is a Gelfand pair.
Every primitive idempotent of its association scheme is the Gram matrix of $r:=[N_G(H):H]$ equal-norm representatives from each of $n:=[G:N_G(H)]$ doubly transitive lines that span $\mathbb{C}^d$ with $d<n$, and the phase of each entry is an $r$th root of unity.
Moreover, the automorphism group of the lines contains the doubly transitive action of $G$ on $G/N_G(H)$.
\end{itemize}
\end{theorem}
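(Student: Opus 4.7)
The plan is to start from doubly transitive lines $\mathscr{L} = \{\ell_j\}_{j \in [n]}$ spanning $\mathbb{C}^d$ with $n > d \geq 1$, apply Proposition~\ref{prop: 2tran implies ETF} to extract an ETF $\Phi = \{\varphi_j\}_{j\in [n]}$, and then \emph{radicalize}: for each $\sigma \in S$, choose a unitary $Q_\sigma \in \operatorname{U}(d)$ realizing $\sigma$ on $\mathscr{L}$, and observe that $Q_\sigma \varphi_j = \omega_j(\sigma)\varphi_{\sigma(j)}$ for some phase $\omega_j(\sigma) \in \mathbb{T}$. The collection of all such $Q_\sigma$ generates a subgroup $\widetilde G \leq \operatorname{U}(d)$; the quotient by an appropriate finite central subgroup of scalars should yield a finite group $G$ that still permutes the lines and still acts on a finite orbit of representatives of $\varphi_1$. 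The goal is to identify this $G$ and an appropriate point stabilizer $H \leq G$ as a Higman pair whose Schurian scheme is exactly the one carried by the resulting $r$ copies of each $\varphi_j$.

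Concretely, I would take $H$ to be the pointwise stabilizer of $\varphi_1$ in $G$ and set $K = N_G(H)$; the target is $K = \operatorname{Stab}_G(\ell_1)$, so that $n = [G:K]$ matches the number of lines and $r := [K:H]$ is the size of the $K$-orbit of $\varphi_1$. The quotient $K/H$ should act by scalar multiplication on $\varphi_1$ and therefore embed into $\mathbb{T}$, forcing it to be finite cyclic, hence abelian, which delivers axiom (H2). The orbit $K \cdot \varphi_1$ furnishes $r$ equal-norm representatives of $\ell_1$, and transporting under coset representatives $\{x_j\}_{j \in [n]}$ of $K$ in $G$ gives $r$ equal-norm representatives of each $\ell_j$. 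Their $rn \times rn$ Gram matrix is visibly $G$-invariant, so lies in the adjacency algebra of the Schurian scheme of $(G,H)$; since it is a rank-$d$ projection inside that algebra, it is a sum of primitive idempotents, and the fact that $G$ already acts doubly transitively on lines forces it to be a single primitive idempotent of rank $d$.

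To obtain (H1), I would use double transitivity of $S$ on $\mathscr{L}$ together with the fact that $K$ is the stabilizer of $\ell_1$ (which transfers double transitivity to $G \curvearrowright G/K$). The key $b$ should be chosen as (a lift of) any element of $G$ that carries $\ell_1$ off itself; then (H3) reflects the fact that $\mathcal{S}$ is Hermitian (so the off-diagonal signature is closed under inversion), (H4) reflects that $K$ acts by scalars on $\varphi_1$ and so can only rotate the off-diagonal phase $\langle b \varphi_1, \varphi_1 \rangle$ within its $K/H$-orbit, and (H5) reflects that the stabilizer in $K$ of $\langle b\varphi_1,\varphi_1\rangle$ must be exactly $H$ (i.e.\ no nontrivial element of $K/H$ can fix the phase, which is forced because the Cayley-type encoding of phases must be injective on $K/H$). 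Once (H1)--(H5) are verified, Proposition~\ref{prop.schurian roux} identifies the Schurian scheme with a roux scheme, and the selected $rn$ representatives carry exactly this scheme with their Gram matrix as a primitive idempotent, completing (a).

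The main obstacle is the radicalization step itself. The phases $\omega_j(\sigma)$ depend on the choice of the lifts $Q_\sigma$ (which are only determined up to a central scalar) and on the choice of representatives $\varphi_j$ (which are only determined up to a unit scalar per line), and one must identify a canonical finite quotient $G$ in which these ambiguities are absorbed while $K/H$ remains naturally identified with the phase group $\Gamma$ governing the signature matrix $\mathcal{S}$. Showing that $\Gamma$ is finite (equivalently, that all encountered phases are roots of unity of bounded order) is nontrivial; it should follow from the equiangularity of $\Phi$ combined with the finiteness of $S$, but it requires a careful accounting. Once radicalization is set up correctly, the Higman pair axioms themselves should be routine consequences of double transitivity and the Hermitian equiangular structure.
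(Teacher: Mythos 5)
Your high-level architecture matches the paper's: a subgroup $H$ stabilizing a chosen representative $\varphi_1$ sitting inside the stabilizer $K$ of the line $\ell_1$, with $K/H$ a finite cyclic phase group, a key chosen as an element moving $\ell_1$ off itself, the Higman axioms extracted from the Hermitian equiangular structure, and the Gram matrix of the orbit landing in the adjacency algebra. However, the step you flag as ``the main obstacle'' is the entire content of the theorem, and the route you sketch for it does not work as stated. The subgroup of $\operatorname{U}(d)$ generated by arbitrary lifts $Q_\sigma$ is generally infinite (each lift is only determined up to a scalar, possibly of infinite order), and quotienting by ``an appropriate finite central subgroup of scalars'' cannot simultaneously produce a finite group and retain the phase data you need in $K/H$: killing all scalars recovers only the permutation image, and no canonical finite subgroup of scalars presents itself otherwise. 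The paper resolves this with Schur covers: $\sigma \mapsto Q_\sigma$ is a projective unitary representation (Lemma~\ref{lem:proj rep}), and Proposition~\ref{prop: make honest} rescales the lifts so that they define an honest unitary representation $\rho^*$ of the \emph{finite} Schur cover $G^*$, whence all encountered phases are automatically roots of unity of bounded order.

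A second, independent gap: even once a finite group is in hand, the phase group by which the line stabilizer acts on $\varphi_1$, namely $\operatorname{im}\alpha = C_{r'}$, is too small. The computation in the proof of Theorem~\ref{thm.radicalization} (see~\eqref{eq:flip inner product}) shows that the off-diagonal phases $z = \mu^{-1}\langle \rho^*(x)\varphi,\varphi\rangle$ satisfy only $z^2 \in C_{r'}$; they are \emph{square roots} of character values and live in $C_{2r'}$. This is precisely why the paper passes to the radicalization $\widetilde{G}^* = G^* \times C_r$ with $r = 2r'$: without this doubling, the equivalence between double cosets of $H$ and inner-product values (the paper's~\eqref{eq: double cosets}) fails, the Gram matrix of the orbit does not carry the Schurian scheme, and (H5) cannot be verified in the form you describe. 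Finally, your assertion that a rank-$d$ projection in the adjacency algebra is forced by double transitivity to be a single primitive idempotent is unjustified as stated; the paper instead exhibits the Gram matrix explicitly in the roux form and invokes Proposition~\ref{prop.roux primitive idempotents} to conclude primitivity.
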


The proof of Theorem~\ref{thm.Higman Pair Theorem}(b) appeared in \cite{IversonM:18}, but the proof of Theorem~\ref{thm.Higman Pair Theorem}(a) was saved for the current installment.
Our proof technique relies on the following notions.
A \textbf{projective unitary representation} of a finite group $G$ is a mapping $\rho \colon G \to \operatorname{U}(d)$ such that $\rho(1) = I$ and such that there exists $f \colon G \times G \to \mathbb{T}$ satisfying $\rho(x)\rho(y) = f(x,y) \rho(xy)$ for every $x,y \in G$.
If $\rho\colon G \to \operatorname{U}(d)$ is a homomorphism (i.e.$f \equiv 1$), then we call $\rho$ an (honest) \textbf{unitary representation}.
The function $f$ may be formalized as follows.

\begin{definition}
Let $G$ be a finite group.
A \textbf{2-cocycle} is a function $f \colon G \times G \to \mathbb{C}^\times$ such that $f(x,y) f(xy,z) = f(y,z) f(x,yz)$ for all $x,y,z \in G$.
A \textbf{2-coboundary} is a function $f \colon G \times G \to \mathbb{C}^\times$ for which there exists $t \colon G \to \mathbb{C}^\times$ such that $f(x,y) = t(x)t(y)t(xy)^{-1}$ for all $x,y \in G$.
The \textbf{Schur multiplier} of $G$ is the abelian group $M(G)$ of 2-cocycles under pointwise multiplication, modulo the subgroup of 2-coboundaries. 
A \textbf{Schur cover} of $G$ is a group $G^*$ with a subgroup $A$ such that:
\begin{itemize}
\item[(i)] the center of $G^*$ and the commutator subgroup of $G^*$ both contain $A$,
\item[(ii)] $A \cong M(G)$, and
\item[(iii)] there is an epimorphism $\pi \colon G^* \to G$ (a \textbf{Schur covering}) with kernel $A$.
\end{itemize}
\end{definition}

The Schur multiplier of a finite group is finite, and every finite group has a (necessarily finite) Schur cover.
We refer to \cite{Karpilovsky:85,Karpilovsky:87} for background.
The following result explains how Schur covers help convert projective unitary representations into honest unitary representations.

\begin{proposition}
\label{prop: make honest}
Let $G$ be a finite group with a projective unitary representation $\rho \colon G \to \operatorname{U}(d)$.
Given any Schur covering $\pi \colon G^* \to G$, there exists a unitary representation $\rho^* \colon G^* \to \operatorname{U}(d)$ and phases $\{ \omega_{x^*} \}_{x^* \in G^*}$ such that $\rho^*(x^*) = \omega_{x^*} \rho(\pi(x^*))$ for every $x^* \in G^*$.
\end{proposition}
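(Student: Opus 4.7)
The plan is to encode $\rho$ as a $\mathbb{T}$-valued 2-cocycle on $G$, pull it back through $\pi$ to a 2-cocycle on $G^*$, and then trivialize that pullback as a 2-coboundary using the universal property of the Schur cover. Associativity of $\rho(x)\rho(y)\rho(z)$ and the normalization $\rho(1)=I$ imply that the function $f\colon G\times G\to\mathbb{T}$ determined by $\rho(x)\rho(y)=f(x,y)\rho(xy)$ is a 2-cocycle satisfying $f(x,1)=f(1,y)=1$. Let $\tilde f(x^*,y^*):=f(\pi(x^*),\pi(y^*))$; this is a 2-cocycle on $G^*$. Suppose we have produced $t\colon G^*\to\mathbb{T}$ with $\tilde f=\partial t$, that is, $\tilde f(x^*,y^*)=t(x^*)t(y^*)t(x^*y^*)^{-1}$. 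Setting $\omega_{x^*}:=t(x^*)^{-1}$ and $\rho^*(x^*):=\omega_{x^*}\,\rho(\pi(x^*))$ gives a unitary map, and
\[
\rho^*(x^*)\rho^*(y^*)=t(x^*)^{-1}t(y^*)^{-1}f(\pi(x^*),\pi(y^*))\rho(\pi(x^*y^*))=t(x^*y^*)^{-1}\rho(\pi(x^*y^*))=\rho^*(x^*y^*).
\]
Thus the entire proof reduces to producing such a $t$, i.e., to showing $[\tilde f]=0$ in $H^2(G^*,\mathbb{T})$.

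To produce $t$, I would fix a set-theoretic section $s\colon G\to G^*$ of $\pi$ with $s(1)=1$, and introduce the map $c\colon G\times G\to A$ given by $c(x,y):=s(x)s(y)s(xy)^{-1}$. Since $A\leq Z(G^*)$, this $c$ is a 2-cocycle valued in $A$. The key structural input is that $(G^*,A)$ is a Schur cover: by the five-term inflation–restriction exact sequence associated to the central extension $1\to A\to G^*\to G\to1$, the transgression homomorphism $\hat A\to H^2(G,\mathbb{T})=M(G)$, $\chi\mapsto[\chi\circ c]$, is an isomorphism. This is precisely the point at which the defining conditions $A\leq Z(G^*)\cap[G^*,G^*]$ and $A\cong M(G)$ are used, and I would cite Karpilovsky for the explicit form of the statement. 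Applying this isomorphism to the class $[f]\in M(G)$ yields a character $\chi\in\hat A$ and a function $u\colon G\to\mathbb{T}$ with
\[
f(x,y)=\chi(c(x,y))\cdot u(x)u(y)u(xy)^{-1}.
\]

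To finish, write each $x^*\in G^*$ uniquely as $x^*=\alpha(x^*)\,s(\pi(x^*))$ with $\alpha(x^*)\in A$; centrality of $A$ gives the bookkeeping identity $\alpha(x^*y^*)=\alpha(x^*)\alpha(y^*)\,c(\pi(x^*),\pi(y^*))$. I would then define $t(x^*):=\chi(\alpha(x^*))^{-1}u(\pi(x^*))$, which lands in $\mathbb{T}$ because both factors do. Substituting the two displayed relations into the definition of $\partial t$ and simplifying verifies $\tilde f=\partial t$, completing the construction.

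The main obstacle is the middle paragraph: establishing that every cohomology class on $G$ becomes trivial after pullback to $G^*$. All of the hypotheses on $(G^*,A,\pi)$ are used there, and nowhere else in the argument; the opening reduction is a direct calculation, and the closing construction of $t$ from $\chi$ and $u$ is routine bookkeeping once the transgression isomorphism is in hand. In particular, the unitarity of $\rho^*$ is automatic from the unitarity of $\rho$ and the fact that $t$ is $\mathbb{T}$-valued, so no further analytic care is needed.
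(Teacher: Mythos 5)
Your proposal is correct and follows essentially the same route as the paper: both encode $\rho$ as a $\mathbb{T}$-valued $2$-cocycle $f$ on $G$, use the surjectivity of the transgression $\hat{A}\to M(G)$ (the paper cites this directly from Karpilovsky as the existence of $\alpha$ with $[f]=[\operatorname{Tra}(\alpha)]$, while you derive it from the five-term exact sequence together with $A\leq Z(G^*)\cap[G^*,G^*]$ and $A\cong M(G)$), and then assemble the phase $\omega_{x^*}$ from a character of $A$ applied to the $A$-part of $x^*$ and a coboundary correction on the $G$-part. The only cosmetic difference is that the paper must normalize its coboundary function $t'$ to be unimodular, a step your $\mathbb{T}$-coefficient formulation absorbs automatically.
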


\begin{proof}
A theorem of Schur~\cite[Lemma~3.1]{Karpilovsky:85} provides the existence of a representation $\rho^* \colon G^* \to \operatorname{GL}(d,
\mathbb{C})$ and an ensemble of complex scalars $\{ \omega_{x^*} \}_{x^* \in G^*}$ satisfying $\rho^*(x^*) = \omega_{x^*} \rho( \pi( x^* ) )$ for every $x^* \in G^*$.
In order to verify that $\rho^*$ takes its image in $\operatorname{U}(d) \leq \operatorname{GL}(d,\mathbb{C})$, we need only check that each scalar $\omega_{x^*}$ is unimodular.
This can be seen by considering the eigenvalues of the diagonalizable operators $\rho( \pi( x^*) )$ and $\rho^* ( x^* )$.
The first has unimodular eigenvalues because it is unitary; the second has unimodular eigenvalues since its order is finite.
As $\rho^*(x^*) = \omega_{x^*} \rho( \pi( x^* ) )$, we conclude that $\omega_{x^*}$ is a phase.
\end{proof}

\begin{lemma}
\label{lem:proj rep}
Let $G$ be a doubly transitive group of automorphisms of a sequence $\mathscr{L} = \{ \ell_i \}_{i\in [n]}$ of $n > d$ lines spanning $\mathbb{C}^d$.
For each $\sigma \in G$, choose a unitary $\rho(\sigma) \in \operatorname{U}(d)$ such that $\rho(1) = I$ and $\rho(\sigma)\ell_i = \ell_{\sigma(i)}$ for every $i \in [n]$.
Then $\rho\colon G \to \operatorname{U}(d)$ is a projective unitary representation.
\end{lemma}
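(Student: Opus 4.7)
The plan is, for each pair $\sigma, \tau \in G$, to analyze the unitary $U := \rho(\sigma\tau)^{-1}\rho(\sigma)\rho(\tau)$ and show it is a scalar multiple of the identity. Tracing through the definitions, $\rho(\tau)$ sends $\ell_i$ to $\ell_{\tau(i)}$, then $\rho(\sigma)$ sends this to $\ell_{\sigma\tau(i)}$, and finally $\rho(\sigma\tau)^{-1}$ sends it back to $\ell_i$; hence $U$ fixes every line of $\mathscr{L}$. Once I prove that any such $U$ equals $\lambda I$ for some $\lambda \in \mathbb{T}$, setting $f(\sigma,\tau) := \lambda$ gives $\rho(\sigma)\rho(\tau) = f(\sigma,\tau)\rho(\sigma\tau)$, which together with the given condition $\rho(1) = I$ witnesses that $\rho$ is a projective unitary representation.

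The main step is therefore the claim that any unitary $U$ fixing every line of $\mathscr{L}$ is a scalar. I would first establish that no two distinct lines of $\mathscr{L}$ are orthogonal. By Proposition~\ref{prop: 2tran implies ETF}, unit-norm representatives $\Phi = \{\varphi_i\}_{i\in[n]}$ form an equiangular tight frame, so $|\langle \varphi_i, \varphi_j \rangle|$ equals a common constant $\mu$ for all $i \neq j$. If $\mu = 0$, then $\Phi$ is an orthonormal set in $\mathbb{C}^d$, forcing $n \leq d$ and contradicting $n > d$; hence $\mu \neq 0$, and no two lines of $\mathscr{L}$ are orthogonal.

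Next, decompose $\mathbb{C}^d$ into the eigenspaces of $U$. Since each $\ell_i$ is one-dimensional and $U$-invariant, it lies entirely in a single eigenspace. If two distinct lines $\ell_i$ and $\ell_j$ belonged to different eigenspaces of the unitary $U$, they would be orthogonal, contradicting the previous paragraph. Thus all the $\ell_i$ lie in a common eigenspace $E_\lambda$, and the spanning hypothesis forces $E_\lambda = \mathbb{C}^d$, whence $U = \lambda I$ with $\lambda \in \mathbb{T}$. The only delicate point is the use of the hypothesis $n > d$ to preclude mutually orthogonal lines; once that is in hand, the eigenspace decomposition closes the argument mechanically.
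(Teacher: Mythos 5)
Your argument is correct. The paper's own proof makes exactly the same reduction --- invoke Proposition~\ref{prop: 2tran implies ETF} to get an equiangular tight frame and use $n>d$ to conclude the common angle $\mu$ is nonzero --- but then finishes by citing Lemma~6.5 of Chien--Waldron rather than arguing directly. What you have done is supply a self-contained proof of that final step: the observation that $U=\rho(\sigma\tau)^{-1}\rho(\sigma)\rho(\tau)$ fixes every line, and the eigenspace argument showing that a unitary fixing a spanning family of pairwise non-orthogonal lines must be scalar, is essentially the standard proof of the cited lemma specialized to the present situation. The one difference worth noting is that your argument uses the \emph{complete} non-orthogonality of the lines (every pair has $\mu\neq 0$), whereas the general statement in the literature only needs the frame graph to be connected; for equiangular tight frames with $n>d$ the frame graph is complete, so nothing is lost here, and your version is arguably cleaner for this application. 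All the individual steps check out: $U$ preserves each one-dimensional $\ell_i$, hence each $\ell_i$ sits inside a single eigenspace of the unitary $U$; distinct eigenspaces are orthogonal, so all lines share one eigenspace; and the spanning hypothesis forces $U=\lambda I$ with $\lambda\in\mathbb{T}$, giving the cocycle $f(\sigma,\tau)=\lambda$.
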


\begin{proof}
Choose unit-norm representatives $\Phi = \{ \varphi_i \}_{i \in [n]}$ for $\mathscr{L}$.
Then $\Phi$ is an equiangular tight frame, by Proposition~\ref{prop: 2tran implies ETF}.
Since $n > d$, the common value of $| \langle \varphi_i, \varphi_j \rangle|$ for $i \neq j$ is not zero.
The desired result now follows immediately from~\cite[Lemma~6.5]{ChienWaldron:18}.
\end{proof}

\begin{figure}
\begin{center}

\begin{tabular}{r|cccccccccc}
\multicolumn{1}{r|}{} & \multicolumn{2}{c}{Group} && \multicolumn{2}{c}{Stabilizer} && \multicolumn{1}{c}{Character}&& \multicolumn{2}{c}{Kernel} \\ \hline
Radicalization & $\widetilde{G}^*$ & $=G^* \times C_r$ && $\widetilde{G}^*_1$ &  $=G^*_1 \times C_r$ && $\widetilde{\alpha} \colon \widetilde{G}_1^* \twoheadrightarrow C_r$ && $H$ & $=\ker \widetilde{\alpha}$ \rule{0 pt}{15 pt}  \\
& $\hookuparrow$ &&& $\hookuparrow$ &&& $\dashuparrow$ extends \\
Schur cover & $G^*$ &&& $G^*_1$ & $=\pi^{-1}(G_1)$ && $\alpha \colon G_1^* \twoheadrightarrow C_{r'}$ \\
& $\pi \twoheaddownarrow \phantom{\pi}$ &&& $\twoheaddownarrow$ & \\
Permutation group & $G$ &&& $G_1$
\end{tabular}
\end{center}

\caption{Overview of notation in Example~\ref{ex:Higman pair from 2x4}, the proof of Theorem~\ref{thm.radicalization}, and Section~\ref{sec.roux tech}.
Here, $\pi\colon G^* \to G$ is a Schur covering, $r = 2r'$, and $(\widetilde{G}^*,H)$ is the \textit{radicalization} of $(G^*,G_1^*,\alpha)$ from Definition~\ref{defn:radicalization}.
}
\label{fig:notation}
\end{figure}

The following example demonstrates the fundamental connection between doubly transitive lines and Higman pairs.

\begin{example}
\label{ex:Higman pair from 2x4}
We produce a Higman pair $(\tilde{G}^*,H)$ from four doubly transitive lines in $\mathbb{C}^2$.
Put $\omega := e^{\pi \mathrm{i}/4}$, $a := \sqrt{2 - \sqrt{3}}$, $b:=\sqrt{1+a^2} = \sqrt{3-\sqrt{3}}$, and consider the lines $\mathscr{L} = \{\ell_1,\ell_2,\ell_3,\ell_4\}$ spanned by columns of
\[
\Phi := \tfrac{1}{b}\left[ \begin{array}{rrrr}
\omega & \omega & a & a \\
a & -a & \omega & - \omega
\end{array} \right].
\]
It is easy to check that $\Phi$ is an equiangular tight frame, and it is well known that $\operatorname{Aut} \mathscr{L} = A_4 =: G$.
Indeed, the permutation $\sigma_1 := (1\, 2)(3\, 4)$ is performed by the unitary $\rho(\sigma_1):=\left[ \begin{smallmatrix} 1 & 0 \\ 0 & -1 \end{smallmatrix} \right]$, and the permutation $\sigma_2 := (2\, 3\, 4)$ is performed by the unitary $\rho(\sigma_2):=\tfrac{1}{\sqrt{2}} \left[ \begin{smallmatrix} 1 & 1 \\ -\mathrm{i} & \mathrm{i} \end{smallmatrix} \right]$.
Consequently, $\operatorname{Aut} \mathscr{L} \geq \langle \sigma_1,\sigma_2 \rangle = A_4$.
Equality holds since there do not exist $n=4$ lines in $d=2$ dimensions that have triply transitive automorphism group $S_4$; see Lemma~\ref{lem.3tran}.
By Lemma~\ref{lem:proj rep}, we may extend $\rho$ to a projective unitary representation $\rho \colon G \to \operatorname{U}(2)$ with the property that $\rho(\sigma) \ell_i = \ell_{\sigma(i)}$ for each $\sigma \in G$ and $i \in [4]$; in fact, the extension is unique up to a choice of unimodular constants.
We desire an honest unitary representation, and so we consider the Schur cover $G^*:= \operatorname{SL}(2,3)$.
Denoting $x_1:= \left[ \begin{smallmatrix} 0 & 1 \\ -1 & 0 \end{smallmatrix} \right]$ and $x_2 := \left[ \begin{smallmatrix} 2 & 2 \\ 0 & 2 \end{smallmatrix} \right]$, we have that $G^*=\langle x_1,x_2\rangle$, and a Schur covering $\pi \colon G^* \to G$ is given by $\pi(x_1) = \sigma_1$ and $\pi(x_2) = \sigma_2$.
As in Proposition~\ref{prop: make honest}, the data $\rho^*(x_1):=\mathrm{i} \rho(\sigma_1)$ and $\rho^*(x_2):= -e^{\pi \mathrm{i}/12} \rho(\sigma_2)$ then determine a unitary representation $\rho^* \colon G^* \to \operatorname{U}(2)$ with the property that $\rho^*(x) \ell_i = \ell_{\pi(x)(i)}$ for each $x \in G$ and $i \in [4]$.

Next, we consider how a subgroup of $G^*$ holds a line invariant via $\rho^*$.
The stabilizer $G_1 \leq A_4$ of $1\in[4]$ is generated by $\sigma_2$, and $G_1^*:=\langle x_2\rangle$ is the preimage of $G_1$ in $G^*$.
As such, $G_1^*={\{x\in G^*:}\rho^*(x)\ell_1=\ell_1\}$ is the stabilizer of $\ell_1$.
Since $\ell_1$ is spanned by $\varphi:=\frac{1}{b}\left[\begin{smallmatrix}\omega\\a\end{smallmatrix}\right]$, the action of $G_1^*$ on $\ell_1$ determines a homomorphism $\alpha\colon G_1^*\to\mathbb{T}$ such that $\rho^*(x)\varphi=\alpha(x)\varphi$ for every $x\in G_1^*$.
Explicitly, $\alpha(x_2)=-1$, since $\varphi$ is an eigenvector of $\rho^*(x_2)$ with eigenvalue $-1$.
It follows that the $G^*$-orbit of $\varphi$ consists of the columns of $z\Phi$ for each $z\in\{\pm1\}=C_{r'}$, where $r':=2$.
Meanwhile, the off-diagonal entries of the signature matrix of $\Phi$ are all members of $C_r$ with $r:=2r'=4$.

It will be convenient to extend $G^*$ and $\rho^*$ so that the resulting orbit of $\varphi$ consists of the columns of $z\Phi$ for every $z\in C_r$, as suggested by the signature matrix.
To accomplish this, we put $\tilde{G}^*:=G^*\times C_r$ and define $\tilde{\rho}^*\colon \tilde{G}^*\to\operatorname{U}(2)$ by $\tilde{\rho}^*(x,z):=z\rho^*(x)$.
Then the stabilizer of $\ell_1$ in $\tilde{G}^*$ is $\tilde{G}^*_1:=G^*_1\times C_r$, and the resulting character is $\tilde{\alpha}\colon\tilde{G}^*_1\to C_r$ given by $\tilde{\alpha}(x,z):=z\alpha(x)$.
Taking $H:=\{\tilde{x}\in\tilde{G}^*:\tilde{\rho}^*(\tilde{x})\varphi=\varphi\}=\operatorname{ker}\tilde{\alpha}$, then it turns out that $(\tilde{G}^*,H)$ is a Higman pair.
Furthermore:
\begin{center}
\textit{We can recover the doubly transitive lines from the Higman pair.}
\end{center}
Indeed, the $\tilde{G}^*$-orbit of $\varphi$ consists of $rn=16$ vectors, and their Gram matrix is a scalar multiple of an (easy-to-find) primitive idempotent in the adjacency algebra of $(\tilde{G}^*,H)$.
\end{example}

Following the previous example, we note that when $r = 2r'$, each element of $C_{r'}$ can be obtained by squaring an appropriate element of $C_r$.
In this sense, $C_r$ consists of radicals of $C_{r'}$.
This fact motivates the following terminology, which will play a crucial role in our proofs of the Higman Pair Theorem and the main result.

\begin{definition}
\label{defn:radicalization}
Given a group $G^*$, a subgroup $G^*_1 \leq G^*$, and a linear character $\alpha \colon G_1^* \to \mathbb{T}$, put $r' =|\operatorname{im}\alpha|$, $r=2r'$, and define $\widetilde{\alpha}\colon G^*_1 \times C_r\to C_r$ by $\widetilde{\alpha}(x,z)=\alpha(x)z$.
(Here, $C_r \leq \mathbb{C}^\times$ is the group of $r$th roots of unity under multiplication.)
We say $(G^*\times C_r,\operatorname{ker}\widetilde{\alpha})$ is the \textbf{radicalization} of $(G^*,G^*_1,\alpha)$.
\end{definition}

\begin{theorem}[Lines from permutations]
\label{thm.radicalization}
Let $G \leq S_n$ be a doubly transitive permutation group with $n \geq 3$, and let $G_1\leq G$ be the stabilizer of $1 \in [n]$.
Choose any Schur covering $\pi\colon G^* \to G$, and put $G^*_1:=\pi^{-1}(G_1)$.
If $G$ is a group of automorphisms of a sequence $\mathscr{L}$ of $n > d$ lines spanning $\mathbb{C}^d$, then there is a linear character $\alpha \colon G_1^* \to \mathbb{T}$ such that:
\begin{itemize}
\item[(a)]
the radicalization $(\widetilde{G}^*,H)$ of $(G^*,G^*_1,\alpha)$ is a Higman pair, and
\item[(b)]
the Schurian scheme of $(\widetilde{G}^*,H)$ has a primitive idempotent that is the Gram matrix of $r:=[N_{\widetilde{G}^*}(H):H]$ equal-norm representatives from each of the $n:=[\widetilde{G}^*:N_{\widetilde{G}^*}(H)]$ lines in $\mathscr{L}$, and furthermore, this Gram matrix carries the scheme.
\end{itemize}
\end{theorem}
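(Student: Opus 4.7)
The plan is to construct the character $\alpha$ from an honest unitary representation of the Schur cover $G^*$, obtained by lifting $G$'s projective action on $\mathscr{L}$, and then to form the radicalization, verify it is a Higman pair, and show that the Schurian scheme is carried by the Gram matrix of a natural choice of line representatives. The doubling $r=2r'$ in the radicalization provides the extra room needed both to symmetrize a Higman key and to separate $H$-double cosets.

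First I would apply Lemma~\ref{lem:proj rep} to produce a projective unitary representation $\rho\colon G\to\operatorname{U}(d)$ with $\rho(\sigma)\ell_i=\ell_{\sigma(i)}$, and then invoke Proposition~\ref{prop: make honest} to lift along $\pi\colon G^*\to G$ to an honest unitary representation $\rho^*\colon G^*\to\operatorname{U}(d)$ satisfying $\rho^*(x)\ell_i=\ell_{\pi(x)(i)}$. Choose a unit vector $\varphi_1\in\ell_1$. Since $\rho^*(G_0^*)$ preserves $\ell_1$, the assignment $\rho^*(x)\varphi_1=\alpha(x)\varphi_1$ defines a linear character $\alpha\colon G_0^*\to\mathbb{T}$. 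With $r'=|\operatorname{im}\alpha|$ and $r=2r'$, the radicalization is $\tilde{G}^*=G^*\times C_r$ with $H=\ker\tilde{\alpha}=\{(x,\alpha(x)^{-1}):x\in G_0^*\}$.

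For part (a), I would verify the Higman axioms in turn. Since $G_0$ is self-normalizing in the doubly transitive group $G$, the same holds for $G_0^*$ in $G^*$; combined with the fact that $\mathbb{T}$-valued characters of $G_0^*$ are invariant under $G_0^*$-conjugation (as $\mathbb{T}$ is abelian), this yields $K:=N_{\tilde{G}^*}(H)=G_0^*\times C_r$. Hence $K/H\cong C_r$ is cyclic of order $r$, which gives (H2), and $\tilde{G}^*/K\cong[n]$ inherits the doubly transitive action of $G$, which gives (H1). For (H3)--(H5) I would select a key $b=(b_0,\omega)$ with $b_0\in G^*\setminus G_0^*$ and $\omega\in C_r$ chosen so that $HbH=Hb^{-1}H$; the choice $r=2r'$ furnishes the square root of the relevant phase needed to make this symmetrization possible, while (H4) and (H5) follow from analysing the unique nontrivial $G_0^*$-double coset in $G^*$ and the intersection $G_0^*\cap b_0 G_0^* b_0^{-1}$.

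For part (b), I would define $\varphi_{(x,w)H}:=w\rho^*(x)\varphi_1$; a short computation using $\rho^*(a)\varphi_1=\alpha(a)\varphi_1$ confirms this is well-defined on cosets of $H$ and yields $r$ unit-norm representatives from each of the $n$ lines. The Gram matrix $\mathcal{G}$ has entries $\overline{v}w\langle\varphi_1,\rho^*(y^{-1}x)\varphi_1\rangle$, which depend only on the $H$-double coset of $(y,v)^{-1}(x,w)$, so $\mathcal{G}$ lies in the adjacency algebra of the Schurian scheme. Since Proposition~\ref{prop: 2tran implies ETF} identifies the line representatives as an ETF, a direct computation gives $\mathcal{G}^2=(nr/d)\mathcal{G}$, so $(d/nr)\mathcal{G}$ is an orthogonal projection in the adjacency algebra; transporting through Proposition~\ref{prop.schurian roux} and invoking Proposition~\ref{prop.roux primitive idempotents} then identifies $\mathcal{G}$ as a scalar multiple of a primitive idempotent. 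The main obstacle is the simultaneous verification of (H3)--(H5) and the ``carries the scheme'' property, both of which rely on the doubling $r=2r'$: without it, neither the symmetry $HbH=Hb^{-1}H$ nor the separation of nontrivial $H$-double cosets by entries $s\langle\varphi_1,\rho^*(u)\varphi_1\rangle$ can be guaranteed in general.
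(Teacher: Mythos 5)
Your proposal follows essentially the same route as the paper's proof: lift the projective action to an honest unitary representation of the Schur cover, read off $\alpha$ from the eigenvalue of $G_0^*$ on a unit vector $\varphi\in\ell_1$, and use the doubling $r=2r'$ to extract the square roots of phases needed both for a symmetric key and to match $H$-double cosets with the inner products $\langle\tilde{\rho}^*(\cdot)\varphi,\varphi\rangle$, which then yields (H3)--(H5), the carrying property, and primitivity via Proposition~\ref{prop.roux primitive idempotents}. The only steps you flag but leave unresolved --- the verification of (H4)/(H5) and the separation of distinct $H$-double cosets --- are exactly where the paper's identity $H\tilde{x}H=H\tilde{y}H\iff\langle\tilde{\rho}^*(\tilde{x})\varphi,\varphi\rangle=\langle\tilde{\rho}^*(\tilde{y})\varphi,\varphi\rangle$ (available because $n>d$ forces $\mu\neq0$) does all the work, and your sketch points at precisely that mechanism.
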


Our proof of Theorem~\ref{thm.radicalization} uses the following well-known characterizations of doubly transitive actions.
For background on permutation groups, we refer the reader to~\cite{DixonMortimer:96}.

\begin{proposition}
\label{prop: char of 2tran}
Let $G$ be a finite group acting transitively on a set $X$, and let $G_1 \leq G$ be the stabilizer of some point $x_1 \in X$.
Then the following are equivalent:
\begin{itemize}
\item[(a)]
$G$ acts doubly transitively on $X$.
\item[(b)]
$G_1$ acts transitively on $X\setminus \{x_1\}$.
\item[(c)]
For any $\sigma \in G \setminus G_1$, $G = G_1 \cup G_1 \sigma G_1$.
\item[(d)]
For any epimorphism $\pi \colon G^* \to G$, $G^*$ acts doubly transitively on $G^*/G_1^*$, where $G_1^* := \pi^{-1}(G_1)$.
\end{itemize}
When these conditions hold, $G_1 \leq G$ is a maximal subgroup.
\end{proposition}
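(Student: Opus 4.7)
The plan is to prove the cycle (a)$\Rightarrow$(b)$\Rightarrow$(c)$\Rightarrow$(a), then establish (a)$\Leftrightarrow$(d) separately via the quotient map induced by $\pi$, and finally deduce maximality of $G_0$ directly from (c).

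For (a)$\Leftrightarrow$(b), the idea is the classical reduction. Given (a) and any $y,z\in X\setminus\{x_0\}$, double transitivity applied to the pairs $(x_0,y)$ and $(x_0,z)$ produces $g\in G$ with $g\cdot x_0=x_0$ and $g\cdot y=z$; such a $g$ lies in $G_0$, so (b) holds. Conversely, given (b) and two ordered pairs of distinct points, first use transitivity of $G$ on $X$ to translate each pair so that its first coordinate becomes $x_0$, then use transitivity of $G_0$ on $X\setminus\{x_0\}$ to align the second coordinates. For (b)$\Leftrightarrow$(c), I invoke the standard bijection between the orbits of $G_0$ on $X$ and the double cosets in $G_0\backslash G/G_0$, given by $G_0 g G_0\leftrightarrow G_0\cdot(g\cdot x_0)$. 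The fixed orbit $\{x_0\}$ pairs with the double coset $G_0$ itself, so $G_0$ has exactly one other orbit on $X$ precisely when $G\setminus G_0$ is a single double coset, i.e., $G=G_0\cup G_0\sigma G_0$ for every (equivalently, some) $\sigma\in G\setminus G_0$.

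For (a)$\Leftrightarrow$(d), define $\overline{\pi}\colon G^*/G_0^*\to X$ by $g^*G_0^*\mapsto\pi(g^*)\cdot x_0$. Well-definedness and injectivity both follow from the identity $G_0^*=\pi^{-1}(G_0)$, while surjectivity follows from surjectivity of $\pi$ together with transitivity of $G$ on $X$. The map $\overline{\pi}$ is $G^*$-equivariant when $G^*$ acts on $X$ through $\pi$, so double transitivity of $G^*$ on $G^*/G_0^*$ is equivalent to double transitivity of $G^*$ on $X$ via $\pi$, which in turn is equivalent to double transitivity of $G$ on $X$ since $\pi$ is onto.

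To prove maximality, suppose $G_0\leq H\leq G$ with $H\neq G_0$ and pick any $\sigma\in H\setminus G_0$. By (c), $G=G_0\cup G_0\sigma G_0\subseteq H$, forcing $H=G$. No step is a serious obstacle here; the only slightly delicate bookkeeping lies in the orbit/double-coset correspondence used for (b)$\Leftrightarrow$(c) and in the verification that $G_0^*=\pi^{-1}(G_0)$ is exactly what is needed to descend $\pi$ to an equivariant bijection between the two coset spaces.
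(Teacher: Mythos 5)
Your proof is correct; the paper itself gives no argument for this proposition, instead citing it as well known with a pointer to Dixon--Mortimer, and your chain (a)$\Rightarrow$(b)$\Rightarrow$(c)$\Rightarrow$(a), the orbit/double-coset correspondence, the equivariant bijection $G^*/G_0^* \to X$ induced by $\pi$, and the maximality deduction from (c) are exactly the standard arguments that reference supplies. Nothing further is needed.
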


\begin{proof}[Proof of Theorem~\ref{thm.radicalization}]
Denote $\mathscr{L} =: \{ \ell_j \}_{j \in [n]}$.
Since $G$ consists of automorphisms of $\mathscr{L}$, we can choose unitaries $\{ \rho(\sigma) \}_{\sigma \in G}$ such that $\rho(1) = I$ and $\rho(\sigma) \ell_i = \ell_{\sigma(i)}$ for every $i \in [n]$.
Then $\rho \colon G \to \operatorname{U}(d)$ is a projective unitary representation, by Lemma~\ref{lem:proj rep}.
Applying Proposition~\ref{prop: make honest}, we lift $\rho$ to an honest unitary representation $\rho^* \colon G^* \to \operatorname{U}(d)$ of the Schur cover, where $\rho^*(x) \ell_i = \ell_{\pi(x)(i)}$ for every $x \in G^*$ and $i \in [n]$.

Fix a unit vector $\varphi \in \ell_1$.
Since $G_1^*$ holds $\ell_1$ invariant, every $\xi \in G_1^*$ produces a unimodular constant $\alpha(\xi)$ such that $\rho^*(\xi) \varphi = \alpha(\xi) \varphi$.
Moreover, $\alpha \colon G_1^* \to \mathbb{T}$ is a linear character since $\rho^*$ is a homomorphism.
Put $r' := |\operatorname{im} \alpha|$ and $r := 2r'$, so that $\alpha$ maps $G_1^*$ onto $C_{r'} \leq C_r \leq \mathbb{T}$.
We would like to extend $\rho^*$ in such a way that its image includes all of $\{z I : z \in C_r\}$.
To that end, we define extension groups $\widetilde{G}^* := G^* \times C_r$ and $\widetilde{G}_1^* := G_1^* \times C_r$, and let $\tilde{\rho}^* \colon \widetilde{G}^* \to \operatorname{U}(d)$ be the unitary representation given by $\tilde{\rho}^*(x,z) = z\, \rho^*(x)$.
Then $\widetilde{G}^*$ permutes $\mathscr{L}$ according to the rule $\tilde{\rho}^*(x,z) \ell_i = \ell_{\pi(x)(i)}$, and in particular
\begin{equation}
\label{eq: line stabilizer}
\widetilde{G}_1^* = \bigl\{ \tilde{x} \in \widetilde{G}^* : \tilde{\rho}^*(\tilde{x}) \ell_1 = \ell_1 \bigr\}.
\end{equation}
As before, this implies that $\rho^*(x,z) \varphi = \widetilde{\alpha}(\xi,z) \varphi$ for every $(\xi,z) \in \widetilde{G}_1^*$, where $\widetilde{\alpha} \colon \widetilde{G}_1^* \to C_r$ is the character $\widetilde{\alpha}(\xi,z) = z \alpha(\xi)$.
Let $H \leq \widetilde{G}_1^*$ be the group that stabilizes not only $\ell_1$, but also $\varphi$:
\[ H = \bigl\{ \tilde{x} \in \widetilde{G}^* : \tilde{\rho}^*(\tilde{x}) \varphi = \varphi \bigr\} = \ker \widetilde{\alpha} = \bigl\{ \bigl(\xi,\alpha(\xi)^{-1}\bigr) \in \widetilde{G}_1^* : \xi \in G_1^* \bigr\}. \]
Then $(\widetilde{G}^*, H)$ is the radicalization of $(G^*,G^*_1,\alpha)$.
The notation established thus far is partially summarized in Figure~\ref{fig:notation}.

To prove $(\widetilde{G}^*, H)$ is a Higman pair, first observe that $H = \ker \widetilde{\alpha} \trianglelefteq \widetilde{G}_1^*$, and so $N_{\widetilde{G}^*}(H) \geq \widetilde{G}_1^*$.
By Proposition~\ref{prop: char of 2tran} and~\eqref{eq: line stabilizer}, $\widetilde{G}^*$ acts doubly transitively on $\widetilde{G}^*/\widetilde{G}_1^*$, and $\widetilde{G}_1^*\leq \widetilde{G}^*$ is a maximal subgroup.
We claim that $H$ is not normal in $\widetilde{G}^*$, and therefore $\widetilde{G}_1^* = N_{\widetilde{G}^*}(H)$.
Notice that $G_1^*$ is not normal in $G^*$ since it has $n\geq 3$ left cosets but only two double cosets.
Choose $\xi \in G_1^*$ and $x \in G^*$ such that $x\xi x^{-1} \notin G_1^*$.
Then $(\xi,\alpha(\xi)^{-1}) \in H$, but $(x,1)(\xi,\alpha(\xi)^{-1})(x,1)^{-1} = (x\xi x^{-1},\alpha(\xi)^{-1}) \notin H$.
This proves the claim, and (H1) follows.
We have (H2) since $\widetilde{G}_1^*/\ker \widetilde{\alpha} \cong C_r$.

We now obtain a candidate key for $(\widetilde{G}^*, H)$.
For each $i \in [n]$, choose $x_i \in G^*$ such that $\pi(x_i)(1) = i$.
Then $\{ x_i \}_{i \in [n]}$ is a transversal for $G^* / G_1^*$, and $\Phi := \{ \rho^*(x_i) \varphi \}_{i\in [n]}$ is an ETF by Proposition~\ref{prop: 2tran implies ETF}.
Every $x \in G^*$ can be written uniquely in the form $x = x_i \xi$ for some $i \in [n]$ and $\xi \in G_1^*$, and so
\begin{equation}
\label{eq: phi levels}
| \langle \rho^*(x) \varphi, \varphi \rangle | = \begin{cases}
1 & \text{if }x \in G_1^*, \\
\mu & \text{if }x \notin G_1^*,
\end{cases}
\end{equation}
where $\mu = \sqrt{\frac{n-d}{d(n-1)}} \notin \{0,1\}$ by the Welch bound~\cite{Welch:74}.
We claim that $\mu^{-1} \langle \rho^*(x) \varphi, \varphi \rangle \in C_r$ whenever $x \notin G_1^*$.
Choose such an $x$.
Then $\pi(x) \neq 1$.
Since $G$ acts doubly transitively, there exists $y \in G^*$ such that $\pi(y)[\pi(x)(1)] = 1$ and $\pi(y)(1) = \pi(x)(1)$.
We have $yx,x^{-1}y \in G_1^*$, and so
\begin{equation}
\label{eq:flip inner product}
\langle \rho^*(x) \varphi, \varphi \rangle 
= \bigl\langle \rho^*(yx) \varphi, \rho^*\bigl(xx^{-1}y\bigr) \varphi \bigr\rangle 
= \bigl\langle \alpha(yx) \varphi, \rho^*(x) \alpha\bigl(x^{-1} y \bigr) \varphi \bigr\rangle
= \alpha\bigl(yxy^{-1}x\bigr) \langle \varphi, \rho^*(x) \varphi \rangle.
\end{equation}
Write $\langle \rho^*(x) \varphi, \varphi \rangle =: z \mu$.
Then~\eqref{eq:flip inner product} says that $z \mu = \alpha\bigl(yxy^{-1}x\bigr) z^{-1} \mu$, so that $z^2 = \alpha\bigl(yxy^{-1}x\bigr) \in C_{r'}$.
Therefore, $z \in C_r$, as desired.
In particular, there exists $b = \bigl(x,z^{-1}\bigr) \in \widetilde{G}^* \setminus \widetilde{G}_1^*$ such that $\langle \tilde{\rho}^*(b) \varphi, \varphi \rangle = \mu$.
Fix such $b$ for the remainder of the proof.

Next, we demonstrate the crucial identity
\begin{equation}
\label{eq: double cosets}
\text{for any }\tilde{x},\tilde{y} \in \widetilde{G}^*,
\qquad
H\tilde{x}H = H\tilde{y}H \iff \langle \tilde{\rho}^*(\tilde{x}) \varphi, \varphi \rangle = \langle \tilde{\rho}^*(\tilde{y}) \varphi, \varphi \rangle.
\end{equation}
If $\tilde{x} = a \tilde{y} a'$ for $a,a' \in H$, then
\[ 
\langle \tilde{\rho}^*(\tilde{x}) \varphi, \varphi \rangle 
= \bigl\langle \tilde{\rho}^*(\tilde{y}) \tilde{\rho}^*\bigl(a'\bigr) \varphi, \tilde{\rho}^*\bigl(a^{-1}\bigr)\varphi \bigr\rangle 
= \langle \tilde{\rho}^*(\tilde{y}) \varphi, \varphi \rangle.
\]
Conversely, suppose $\tilde{x} = (x,z)$ and $\tilde{y} = (y,w)$ satisfy $\langle \tilde{\rho}^*(\tilde{x}) \varphi, \varphi \rangle = \langle \tilde{\rho}^*(\tilde{y}) \varphi, \varphi \rangle$, that is, 
\[ z \langle \rho^*(x) \varphi, \varphi \rangle = w \langle \rho^*(y) \varphi, \varphi \rangle. \] 
By~\eqref{eq: phi levels}, either $x,y \in G_1^*$ or $x,y \notin G_1^*$.
In the former case, we have $z \alpha(x) = w \alpha(y)$, so that $\widetilde{\alpha}\bigl(y^{-1}x,w^{-1} z\bigr) = 1$ and $(x,z) = (y,w)\bigl(y^{-1}x,w^{-1}z\bigr) \in H(y,w)H$.
On the other hand, if $x,y \notin G_1^*$ then there exist $\xi, \eta \in G_1^*$ such that $x = \xi y\eta$, by Proposition~\ref{prop: char of 2tran}.
In that case,
\[ 
w \langle \rho^*(y) \varphi, \varphi \rangle 
= z \langle \rho^*(x) \varphi, \varphi \rangle 
= z \bigl\langle \rho^*(y) \alpha\bigl(\eta \bigr) \varphi, \alpha(\xi)^{-1} \varphi \bigr\rangle 
= z \alpha(\eta) \alpha(\xi) \langle \rho^*(y) \varphi, \varphi \rangle
\]
and $z = \alpha(\xi)^{-1} w \alpha(\eta)^{-1}$, so that $(x,z) = \bigl(\xi,\alpha(\xi)^{-1}\bigr) (y,w) \bigl(\eta,\alpha(\eta)^{-1}\bigr) \in H(y,w)H$.
This proves~\eqref{eq: double cosets}.

The remaining Higman pair axioms follow quickly from~\eqref{eq: double cosets}.
Since $\langle \tilde{\rho}^*(b) \varphi, \varphi \rangle = \mu \in \mathbb{R}$, we have $\bigl\langle \tilde{\rho}^*\bigl(b^{-1}\bigr) \varphi, \varphi \bigr\rangle = \langle  \varphi, \tilde{\rho}^*(b) \varphi \rangle = \mu$, and so $b^{-1} \in HbH$.
This is (H3).
For every $a \in \widetilde{G}_1^*$, 
\[ 
\bigl\langle \tilde{\rho}^*\bigl(aba^{-1}\bigr) \varphi, \varphi \bigr\rangle 
= \bigl\langle \tilde{\rho}^*(b) \tilde{\rho}^*\bigl(a^{-1}\bigr) \varphi, \tilde{\rho}^*\bigl(a^{-1}\bigr)\varphi \bigr\rangle 
= \bigl\langle \tilde{\rho}^*(b) \widetilde{\alpha}\bigl(a^{-1}\bigr) \varphi, \widetilde{\alpha}\bigl(a^{-1}\bigr)\varphi \bigr\rangle 
= \langle \tilde{\rho}^*(b) \varphi, \varphi \rangle,
\]
so $aba^{-1} \in HbH$.
This is (H4).
Finally, 
\[ 
\langle \tilde{\rho}^*(ab) \varphi, \varphi \rangle 
= \bigl\langle \tilde{\rho}^*(b) \varphi, \tilde{\rho}^*\bigl(a^{-1}\bigr) \varphi \bigr\rangle 
= \widetilde{\alpha}(a) \langle \tilde{\rho}^*(b) \varphi, \varphi \rangle,
\]
so $ab \in HbH$ only if $a \in \ker \widetilde{\alpha} = H$.
This is (H5), and the proof of (a) is complete.

To prove (b), put $\widetilde{\Phi} = \{ \tilde{\rho}^*(x_i, z) \varphi \}_{i \in [n],\, z \in C_r} = \{ z\cdot \rho^*(x_i) \varphi \}_{i\in [n],\, z \in C_r}$.
As $\operatorname{span}\{ \rho^*(x_i) \varphi \} = \rho^*(x_i) \ell_1 = \ell_i$, $\widetilde{\Phi}$ consists of $r$ representatives from each line in $\mathscr{L}$.
Since $\Phi = \{ \rho^*(x_i) \varphi \}_{i \in [n]}$ is an ETF, Proposition~3.20 in~\cite{IversonJM:17} implies that $\widetilde{\Phi}$ is a tight frame.
In particular, its Gram matrix $\mathcal{G} := \widetilde{\Phi}^* \widetilde{\Phi}$ is a scalar multiple of an orthogonal projection.
Notice that $\{ (x_i, z) \}_{i \in [n],\, z \in C_r}$ is a transversal for $\widetilde{G}^*/H$, since $|\widetilde{G}^*/H| = |\widetilde{G}^*/\widetilde{G}_1^*|\cdot |\widetilde{G}_1^*/H| = nr$ and 
\[ (x_i^{-1} x_j, z^{-1} w) \in H \iff x_i^{-1} x_j \in G_1^*\text{ and } \alpha(x_i^{-1} x_j) = w^{-1} z \iff i=j \text{ and } z=w. \]
It follows from Theorem~3.2 in~\cite{IversonJM:17} that $\mathcal{G}$ lies in the adjacency algebra of $(\widetilde{G}^*,H)$.
Moreover,~\eqref{eq: double cosets} implies that two entries
\[ \mathcal{G}_{(i,z),\, (j,w)} = \langle \tilde{\rho}^*(x_i^{-1} x_j, z^{-1} w) \varphi, \varphi \rangle \qquad \text{and} \qquad \mathcal{G}_{(i',z'),\, (j',w')} = \langle \tilde{\rho}^*(x_{i'}^{-1} x_{j'}, (z')^{-1} {w'}) \varphi, \varphi \rangle \]
of $\mathcal{G}$ are equal if and only if $(x_{i'},z')^{-1} (x_{j'},w') \in H(x_i,z)^{-1} (x_j,w) H$.
Therefore, $\mathcal{G}$ carries the Schurian scheme of $(\widetilde{G}^*,H)$.

It remains only to prove that a scalar multiple of $\mathcal{G}$ is a \emph{primitive} idempotent for $(\widetilde{G}^*,H)$.
To that end, let $B \in \mathbb{C}[C_r]^{n\times n}$ be the roux given by Proposition~\ref{prop.higman's roux} for the Higman pair $(\widetilde{G}^*,H)$.
Specifically, choose $\{ \tilde{x}_i \}_{i\in [n]} := \{ (x_i,1) \}_{i \in [n]}$ as a transversal for $\widetilde{G}^*/\widetilde{G}_1^*$, and $\{ \tilde{a}_z \}_{z\in C_r} := \{ (1,z) \}_{z \in C_r}$ as a transversal for $\tilde{G}_1^* / H$.
For $i \neq j$, \eqref{eq: double cosets} implies that $\tilde{x}_i^{-1} \tilde{x}_j \in H \tilde{a}_z b H$ if and only if $\langle \rho^*(x_i^{-1} x_j) \varphi, \varphi \rangle = z \mu$.
By Proposition~\ref{prop.higman's roux},
\[ B_{ij} 
= \begin{cases}
 \mu^{-1} \langle  \rho^*(x_i^{-1} x_j) \varphi, \varphi \rangle & \text{if }i\neq j, \\
 0 & \text{otherwise}.
 \end{cases}
 \]
 If $\beta \in \hat{C_r}$ is given by $\beta(z) = z^{-1}$, then comparison of the entries shows that 
 \[ \mathcal{G} = \sum_{z \in C_r} \beta(z) \lceil z I \rfloor + \mu \sum_{z\in C_r} \beta(z) \lceil z B \rfloor. \]
 Then Proposition~\ref{prop.roux primitive idempotents} implies that a scalar multiple of $\mathcal{G}$ is a primitive idempotent for $(\widetilde{G}^*,H)$.
 \end{proof}

\begin{proof}[Proof of Theorem~\ref{thm.Higman Pair Theorem}(a)]
Immediate from Theorem~\ref{thm.radicalization}.
\end{proof}

The language of the following corollary intentionally mirrors that of Theorem~3.3 in~\cite{IversonM:18}, which gave a parallel characterization of roux lines.

\begin{corollary}
Let $\mathscr{L}$ be a sequence of linearly dependent complex lines.
Then $\mathscr{L}$ is doubly transitive if and only if all of the following occur simultaneously:
\begin{itemize}
\item[(a)]
$\mathscr{L}$ is equiangular,
\item[(b)]
there exist unit-norm representatives $\{ \varphi_j \}_{j\in [n]}$ of $\mathscr{L}$ whose signature matrix is comprised of $r$th roots of unity for some $r$, and
\item[(c)]
the Gram matrix $\mathcal{G}$ of $\{ z \varphi_j \}_{j \in [n],\, z \in C_r}$ carries a Schurian association scheme.
\end{itemize}
In this case, $\mathcal{G}$ carries the Schurian association scheme of a Higman pair, and a scalar multiple of $\mathcal{G}$ is a primitive idempotent for that scheme.
\end{corollary}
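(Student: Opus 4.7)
The plan is to prove the two directions separately, using Theorem \ref{thm.radicalization} for the forward implication and Proposition \ref{prop.schurian roux} for the reverse.

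For the forward direction, assume $\mathscr{L}$ is doubly transitive. Proposition \ref{prop: 2tran implies ETF} immediately yields that any choice of unit-norm representatives forms an equiangular tight frame, giving~(a). Next I apply Theorem \ref{thm.radicalization} to $G = \operatorname{Aut}\mathscr{L}$ to produce a Higman pair $(\widetilde{G}^*, H)$. In the proof of that theorem, the specific representatives $\varphi_j := \rho^*(x_j) \varphi$ were shown to satisfy $\mu^{-1} \langle \rho^*(x_i^{-1}x_j) \varphi, \varphi \rangle \in C_r$ for every $i \neq j$, which is precisely the assertion that the signature matrix of $\{ \varphi_j \}_{j \in [n]}$ has entries in $C_r$; this is~(b). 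For~(c), part~(b) of Theorem \ref{thm.radicalization} says that the Gram matrix $\mathcal{G}$ of $\{ z \varphi_j \}_{j \in [n],\, z \in C_r}$ carries the Schurian scheme of $(\widetilde{G}^*, H)$. The same theorem identifies $\mathcal{G}$ as a scalar multiple of a primitive idempotent, establishing the final sentence of the corollary.

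For the reverse direction, assume (a), (b), (c). Let $\mathcal{S}$ be the signature matrix from (b), and let $\mathcal{G}$ be the Gram matrix from (c). I would construct a candidate roux $B \in \mathbb{C}[C_r]^{n \times n}$ by setting $B_{ij} = \mathcal{S}_{ij} \in C_r \subset \mathbb{C}[C_r]$ for $i \neq j$ and $B_{ii} = 0$. Axioms (R1)--(R3) are immediate from the definition of a signature matrix. The key step is (R4): the Cayley-embedded matrices $\{ \lceil g I \rfloor \}_{g \in C_r}$ together with $\{ \lceil g B \rfloor \}_{g \in C_r}$ must span an algebra. To see this, observe that $\mathcal{G} = \lceil I \rfloor + \mu \lceil B \rfloor$ (up to the common inner-product magnitude $\mu$), while the diagonal action of $C_r$ on the vectors $\{ z \varphi_j \}$ preserves $\mathcal{G}$ and commutes with the adjacency algebra of the Schurian scheme carried by $\mathcal{G}$. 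A Schurian adjacency algebra is closed under multiplication and under the $C_r$-twist, so the matrices $\{\lceil gI \rfloor, \lceil gB \rfloor\}_{g \in C_r}$ lie in and span an algebra inside it, giving (R4). Once $B$ is a roux, Proposition \ref{prop.schurian roux} shows that the Schurian scheme of the Gelfand-type pair $(G, H)$ carried by $\mathcal{G}$ is isomorphic to the roux scheme of $B$, and therefore $(G,H)$ is a Higman pair. Axiom (H1) then provides a doubly transitive action of $G$ on $G / N_G(H)$, and by tracking the identification of $G/N_G(H)$ with the set of lines $\{\ell_j\}_{j\in[n]}$ (the $C_r$-orbits of $\{z\varphi_j\}$), this action embeds into $\operatorname{Aut}\mathscr{L}$, proving double transitivity of $\mathscr{L}$.

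The main obstacle is establishing (R4) in the reverse direction: the hypotheses~(a) and~(b) only constrain $B$ entrywise, while the algebra condition requires global structure. This is exactly where (c) is essential, and the careful part is verifying that the $C_r$-scalar symmetry of $\mathcal{G}$ lifts to a genuine subgroup of $G$ acting regularly on fibres, so that the algebra generated by the level-set matrices of $\mathcal{G}$ splits as a sum of Cayley-embedded matrices of the desired form. Once past this point, the rest of the corollary, including the final sentence, follows by combining Proposition \ref{prop.schurian roux} with Proposition \ref{prop.two reps of same lines} and Proposition \ref{prop.roux primitive idempotents} to identify $\mathcal{G}$ as a scalar multiple of a primitive idempotent.
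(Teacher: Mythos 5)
Your forward direction is exactly the paper's: both reduce to the observation that (a)--(c) were already verified in the proof of Theorem~\ref{thm.radicalization}, and the final sentence comes along with it. The divergence is in the reverse direction. The paper treats it as a chain of citations: Theorem~4.3 of \cite{IversonM:18} converts (a)--(c) into a roux whose roux scheme is the Schurian scheme carried by $\mathcal{G}$, with a scalar multiple of $\mathcal{G}$ as a primitive idempotent; Proposition~\ref{prop.schurian roux} upgrades the underlying pair to a Higman pair; and Theorem~\ref{thm.Higman Pair Theorem}(b) then says that this primitive idempotent is the Gram matrix of representatives of doubly transitive lines, which must be $\mathscr{L}$. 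You instead try to rebuild the roux by hand, which is legitimate in spirit, but the execution has genuine gaps.

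Concretely: first, the identity $\mathcal{G} = \lceil I \rfloor + \mu \lceil B \rfloor$ is false; the diagonal blocks of $\mathcal{G}$ have $(z,w)$ entry $\overline{w}z$, not $\delta_{zw}$, and the correct expansion is $\mathcal{G} = \sum_{z\in C_r}\beta(z)\lceil zI\rfloor + \mu\sum_{z\in C_r}\beta(z)\lceil zB\rfloor$ with $\beta(z)=z^{-1}$. The salvageable version of your (R4) argument is that the level sets of $\mathcal{G}$ are precisely the matrices $\lceil zI\rfloor$ and $\lceil zB\rfloor$ (since $0<\mu<1$ separates the two families of values), so hypothesis~(c) literally asserts that these matrices span a $*$-algebra. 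Second, the ``Furthermore'' clause of Proposition~\ref{prop.roux primitive idempotents} requires $\mathcal{G}^2 = c\mathcal{G}$ as a hypothesis, and you never establish that the vectors $\{z\varphi_j\}$ form a tight frame; this is where linear dependence must enter (the roux relation forces $\hat{\beta}(B)$ to have exactly two eigenvalues, so a \emph{singular} Gram matrix $I+\mu\hat{\beta}(B)$ is necessarily a multiple of a projection). Third, and most seriously, your concluding claim that the doubly transitive action of $G$ on $G/N_G(H)$ ``embeds into $\operatorname{Aut}\mathscr{L}$'' by ``tracking the identification'' is exactly the nontrivial content of Theorem~\ref{thm.Higman Pair Theorem}(b): one must show that the permutation of the $nr$ vectors induced by $G$ is realized by a unitary and descends coherently to the $n$ lines. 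None of Propositions~\ref{prop.schurian roux}, \ref{prop.two reps of same lines}, or \ref{prop.roux primitive idempotents} delivers this. The repair is to invoke Theorem~\ref{thm.Higman Pair Theorem}(b) once you know $(G,H)$ is a Higman pair and a scalar multiple of $\mathcal{G}$ is a primitive idempotent, which is precisely what the paper does.
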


\begin{proof}
If $\mathscr{L}$ is doubly transitively, then (a)--(c) were verified in the proof of Theorem~\ref{thm.radicalization}.
Conversely, if (a)--(c) are satisfied, then Theorem~3.3 in~\cite{IversonM:18} gives that there is a roux having $\mathscr{L}$ as roux lines and the Schurian association scheme carried by $\mathcal{G}$ as its roux scheme, with a scalar multiple of $\mathcal{G}$ as a primitive idempotent.
By Proposition~\ref{prop.schurian roux}, $\mathcal{G}$ carries the Schurian association scheme of a Higman pair.
Then Theorem~\ref{thm.Higman Pair Theorem} implies that $\mathcal{G}$ is the Gram matrix of $r$ unit-norm representatives from a sequence of doubly transitive lines, namely $\mathscr{L}$.
\end{proof}

\section{Roux from radicalization}
\label{sec.roux tech}

The previous section leveraged Theorem~\ref{thm.radicalization} to prove part~(a) of the Higman Pair Theorem, and in the next section, we will again apply Theorem~\ref{thm.radicalization} to classify sequences of doubly transitive lines that exhibit almost simple symmetries.
In particular, for every almost simple doubly transitive permutation group, we will determine whether that group is a subgroup of the automorphism group of some sequence of lines.
By Theorem~\ref{thm.radicalization}, it suffices to find a linear character for which a certain radicalization is a Higman pair.
In the event that such a character exists, we will need to dig further to describe the lines, find the dimension of their span, and determine whether the lines are real.
The purpose of this section is to build up the machinery needed to answer such questions.

\begin{remark}
\label{rem.setting}
Throughout this section, the results we present will assume the following setting without mention.
Let $G$ be a doubly transitive permutation group on $n\geq 3$ points, and let $G_1\leq G$ be the stabilizer of some point.
Given a Schur covering $\pi\colon G^* \to G$, put $G^*_1 = \pi^{-1}(G_1)$.
Fix a linear character $\alpha \colon G_1^* \twoheadrightarrow C_{r'}$ and denote $r := 2r' = 2|\operatorname{im} \alpha|$, $\widetilde{G}^* := G^* \times C_r$, and $\widetilde{G}_1^* := G_1^* \times C_r$.
Let $\widetilde{\alpha}\colon \widetilde{G}_1^* \to C_r$ be the epimorphism given by $\widetilde{\alpha}(x,z) = z\, \alpha(x)$, and put
\begin{equation}
\label{eq.H as kernel}
H := \ker \widetilde{\alpha} = \bigl\{ \bigl(\xi,\alpha(\xi)^{-1}\bigr) : \xi \in G_1^* \bigr\},
\end{equation}
so that $(\widetilde{G}^*,H)$ is the radicalization of $(G^*,G_1^*,\alpha)$.
This notation is summarized in Figure~\ref{fig:notation}.
\end{remark}

The following lemma is proved as in the proof of Theorem~\ref{thm.radicalization}.

\begin{lemma}
\label{lem.normalizer}
$\widetilde{G}^*$ acts doubly transitively on $\widetilde{G}^*/\widetilde{G}_1^*$. Furthermore, $N_{\widetilde{G}^*}(H) = \widetilde{G}_1^*$.
\end{lemma}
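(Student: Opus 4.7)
My plan is to extract both claims from facts that were essentially already established during the proof of Theorem~\ref{thm.radicalization}, repackaged as a standalone lemma.

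For the first claim, I would invoke Proposition~\ref{prop: char of 2tran}(d) applied to the epimorphism $\widetilde{\pi}\colon \widetilde{G}^* \to G$ defined by $\widetilde{\pi}(x,z)=\pi(x)$. Its preimage of $G_0$ is precisely $\widetilde{G}_0^* = G_0^*\times C_r$, and since $G$ acts doubly transitively on $G/G_0 \cong [n]$, part~(d) of that proposition immediately gives that $\widetilde{G}^*$ acts doubly transitively on $\widetilde{G}^*/\widetilde{G}_0^*$.

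For the second claim, I would first note the easy inclusion $\widetilde{G}_0^*\leq N_{\widetilde{G}^*}(H)$, which holds because $H=\ker\widetilde{\alpha}$ is the kernel of a homomorphism defined on $\widetilde{G}_0^*$, hence normal there. By the first claim combined with the final sentence of Proposition~\ref{prop: char of 2tran}, $\widetilde{G}_0^*$ is a maximal subgroup of $\widetilde{G}^*$. Thus $N_{\widetilde{G}^*}(H)$ is either $\widetilde{G}_0^*$ or all of $\widetilde{G}^*$, and it suffices to rule out the latter by exhibiting a conjugate of some element of $H$ that falls outside $H$.

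To produce such a conjugate, I would argue that $G_0^*$ is not normal in $G^*$: otherwise $G_0 = \pi(G_0^*)$ would be normal in $G$, but since $G\leq S_n$ acts faithfully and doubly transitively on $n\geq 3$ points, $G_0$ is a nontrivial proper subgroup whose normal closure would be contained in the intersection of all point stabilizers, namely the trivial kernel of the action. Hence there exist $\xi\in G_0^*$ and $x\in G^*$ with $x\xi x^{-1}\notin G_0^*$. Then $(\xi,\alpha(\xi)^{-1})\in H$, while
\[ (x,1)\bigl(\xi,\alpha(\xi)^{-1}\bigr)(x,1)^{-1} = \bigl(x\xi x^{-1},\alpha(\xi)^{-1}\bigr)\notin H, \]
because the first coordinate lies outside $G_0^*$, so the element is not even in $\widetilde{G}_0^*$, let alone in $H$. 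This forces $N_{\widetilde{G}^*}(H) = \widetilde{G}_0^*$, completing the proof. The only mild subtlety is justifying non-normality of $G_0$ in $G$, but this is immediate from faithfulness of the action together with the hypothesis $n\geq 3$; no serious obstacle is anticipated.
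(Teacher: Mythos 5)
Your proposal is correct and follows essentially the same route as the paper: double transitivity of $\widetilde{G}^*$ on $\widetilde{G}^*/\widetilde{G}_0^*$ via Proposition~\ref{prop: char of 2tran}(d), the easy inclusion $\widetilde{G}_0^*\leq N_{\widetilde{G}^*}(H)$ from $H=\ker\widetilde{\alpha}$, and the same explicit conjugate $(x,1)\bigl(\xi,\alpha(\xi)^{-1}\bigr)(x,1)^{-1}=\bigl(x\xi x^{-1},\alpha(\xi)^{-1}\bigr)\notin H$. The only (harmless) deviation is your justification that $G_0^*$ is not normal in $G^*$: you argue via faithfulness of the action of $G$ on $[n]$, whereas the paper simply notes that $G_0^*$ has $n\geq 3$ left cosets but only two double cosets; both are valid.
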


\begin{lemma}[Key finder]
\label{lem.key finder}
Assume that $(\widetilde{G}^*, H)$ is a Higman pair.
Given $x \in G^* \setminus G^*_1$, there exist $\xi, \eta \in G_1^*$ such that $x^{-1} = \xi x \eta$.
Pick either of the $z \in C_r$ such that $z^2 = \alpha(\xi\eta)$.
Then $(x,z)$ is a key for $(\widetilde{G}^*,H)$.
\end{lemma}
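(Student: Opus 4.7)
The plan is to verify Higman pair axioms (H3)--(H5) for $(x,z)$, leveraging both the explicit form of $\xi,\eta,z$ and the fact that $(\widetilde{G}^*,H)$ is already known to admit \emph{some} key $b_0$. The non-membership $(x,z)\in\widetilde{G}^*\setminus\widetilde{G}_0^* = \widetilde{G}^*\setminus N_{\widetilde{G}^*}(H)$ is immediate from $x\notin G_0^*$ and Lemma~\ref{lem.normalizer}.

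First I would justify the existence of $\xi,\eta\in G_0^*$. Since $\pi(x)\notin G_0$ and $G$ acts doubly transitively on $G/G_0$ with $n\geq 3$, Proposition~\ref{prop: char of 2tran}(c) gives $\pi(x)^{-1}=g_1\pi(x)g_2$ for some $g_1,g_2\in G_0$; lifting $g_1,g_2$ to $\widetilde{g}_1,\widetilde{g}_2\in G_0^*$ produces $\widetilde{g}_1 x\widetilde{g}_2=k\,x^{-1}$ for some $k\in\ker\pi$. Because $\ker\pi\leq Z(G^*)\cap G_0^*$ (the central containment is intrinsic to a Schur covering), $k$ may be absorbed into $\widetilde{g}_1$ to obtain an exact identity $x^{-1}=\xi x\eta$ with $\xi,\eta\in G_0^*$. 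Since $\alpha(\xi\eta)\in\mathrm{im}\,\alpha=C_{r'}$, it has two square roots in $C_r=C_{2r'}$, and the forthcoming argument works for either. Then (H3) follows from the direct computation
\[
(\xi,\alpha(\xi)^{-1})\,(x,z)\,(\eta,\alpha(\eta)^{-1})=\bigl(\xi x\eta,\,\alpha(\xi\eta)^{-1}z\bigr)=(x^{-1},z^{-1})=(x,z)^{-1},
\]
in which the substitution $\alpha(\xi\eta)^{-1}z=(z^2)^{-1}z=z^{-1}$ is precisely what the choice of $z$ was designed to produce. Since the flanking factors lie in $H=\ker\widetilde{\alpha}$, we conclude $(x,z)^{-1}\in H(x,z)H$, and (H3) follows by taking inverses.

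For (H4) and (H5), I would transfer the axioms from $b_0$. By Lemma~\ref{lem.normalizer} and Proposition~\ref{prop: char of 2tran}(c), $\widetilde{G}^*\setminus\widetilde{G}_0^*=\widetilde{G}_0^*\,b_0\,\widetilde{G}_0^*$, so $(x,z)=c_1b_0c_2$ for some $c_1,c_2\in\widetilde{G}_0^*$, and the normality $H\trianglelefteq\widetilde{G}_0^*$ gives $H(x,z)H=c_1\,Hb_0H\,c_2$. For (H4), given $a\in\widetilde{G}_0^*$, the containment $a(x,z)a^{-1}\in H(x,z)H$ unwinds to
\[
(c_1^{-1}ac_1)\,b_0\,(c_2a^{-1}c_2^{-1})\in Hb_0H.
\]
Applying (H4) for $b_0$ to the first factor rewrites the left side as $h_1 b_0 h_2\,(c_1^{-1}ac_1)(c_2a^{-1}c_2^{-1})$ for some $h_1,h_2\in H$, so the claim reduces to $(c_1^{-1}ac_1)(c_2a^{-1}c_2^{-1})\in H=\ker\widetilde{\alpha}$; this is the crux, and it holds because $\widetilde{\alpha}$ maps into the abelian group $C_r$, allowing the factors to be reordered so that $a$ and $a^{-1}$ cancel. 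For (H5), assuming $a(x,z)=h_1(x,z)h_2$ with $h_i\in H$ and substituting $(x,z)=c_1b_0c_2$ rearranges to $(c_1^{-1}h_1^{-1}ac_1)\,b_0=b_0\,(c_2h_2c_2^{-1})$; the right side lies in $Hb_0H$ because $c_2Hc_2^{-1}=H$, and (H5) for $b_0$ then forces $c_1^{-1}h_1^{-1}ac_1\in H$, from which $a\in H$ follows by another application of $c_1Hc_1^{-1}=H$.

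The only genuine obstacle is the cancellation step in (H4): the identity $(c_1^{-1}ac_1)(c_2a^{-1}c_2^{-1})\in H$ depends essentially on $H$ being the kernel of a homomorphism into an \emph{abelian} group, which is exactly the content of axiom (H2). Everything else is straightforward bookkeeping with the normality of $H$ in $\widetilde{G}_0^*$ and with the decomposition $\widetilde{G}^*\setminus\widetilde{G}_0^*=\widetilde{G}_0^*b_0\widetilde{G}_0^*$.
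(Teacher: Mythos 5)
Your proof is correct, and its overall shape matches the paper's: (H3) is verified by the same one-line computation with the flanking elements $\bigl(\xi,\alpha(\xi)^{-1}\bigr),\bigl(\eta,\alpha(\eta)^{-1}\bigr)\in H$, and (H5) --- the only axiom that genuinely needs the hypothesis that $(\widetilde{G}^*,H)$ is already a Higman pair --- is handled exactly as in the paper, by decomposing $(x,z)$ over the double coset of an assumed key $b_0$ and invoking (H5) for $b_0$ together with $c_iHc_i^{-1}=H$. The one place you diverge is (H4): the paper disposes of it with the direct identity $(\zeta,w)(x,z)(\zeta,w)^{-1}=(\zeta x\zeta^{-1},z)=\bigl(\zeta,\alpha(\zeta)^{-1}\bigr)(x,z)\bigl(\zeta^{-1},\alpha(\zeta)\bigr)\in H(x,z)H$, which needs no reference to $b_0$ at all, whereas you transfer (H4) from $b_0$ and then rely on the abelianness of $C_r$ to get $(c_1^{-1}ac_1)(c_2a^{-1}c_2^{-1})\in\ker\widetilde{\alpha}$. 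Your version is valid but heavier; the direct computation is preferable since it isolates (H5) as the sole axiom requiring the existing key. You also supply a justification for the existence of $\xi,\eta$ (which the paper treats as part of the statement, following from the two-double-coset structure of $G_0^*$ in $G^*$); your lift-and-absorb argument through $\ker\pi\leq Z(G^*)\cap G_0^*$ works, though it is quicker to cite Proposition~\ref{prop: char of 2tran}(c),(d) directly for $G^*$ acting on $G^*/G_0^*$.
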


\begin{proof}
First observe that $x \notin G_1^*$, so $(x,z) \notin \widetilde{G}_1^* = N_{\widetilde{G}^*}(H)$.
To prove (H3) for $b$, notice that $z^{-1} = z\,\alpha(\xi\eta)^{-1}$, so that
\begin{equation}
\label{eq:H3}
(x,z)^{-1}
= \bigl(\xi x \eta, z\, \alpha(\xi \eta)^{-1}\bigr)
=\bigl( \xi, \alpha(\xi)^{-1} \bigr) (x,z) \bigl( \eta, \alpha(\eta)^{-1} \bigr)
\in H(x,z)H.
\end{equation}
For (H4), take any $(\zeta, w) \in \widetilde{G}_1^*$ and compute
\begin{equation}
\label{eq:H4}
(\zeta,w)(x,z)(\zeta,w)^{-1}
= \bigl(\zeta x \zeta^{-1}, z\bigr)
=\bigl( \zeta, \alpha(\zeta)^{-1} \bigr)(x,z)\bigl(\zeta^{-1},\alpha(\zeta) \bigr)
\in H(x,z)H.
\end{equation}

It remains to prove (H5) for $b$. 
To that end, assume $(\zeta, w) \in \widetilde{G}_1^*$ satisfies $(\zeta,w)(x,z) \in H(x,z)H$.
Then there exist $\xi',\eta' \in G_1^*$ such that 
\[ (\zeta,w)(x,z) = \bigl( \xi', \alpha(\xi')^{-1} \bigr) (x,z) \bigl( \eta', \alpha(\eta')^{-1} \bigr). \]
In other words, $\zeta x = \xi' x \eta'$ and $w = \alpha(\xi' \eta')^{-1}$.
Since $(\widetilde{G}^*,H)$ is a Higman pair by assumption, we may select a key $(x_0,z_0)$.
As $x,x_0 \notin G_1^*$, Lemma~\ref{lem.normalizer} and Proposition~\ref{prop: char of 2tran} provide $\xi'', \eta'' \in G_1^*$ such that $x = \xi'' x_0 \eta''$.
Substituting this relation into $\zeta x = \xi' x \eta'$, we obtain $\zeta \xi'' x_0 \eta'' = \xi' \xi'' x_0 \eta'' \eta'$.
Recalling that $w = \alpha(\xi' \eta')^{-1}$, we have
\[
\bigl(\zeta \xi'', w \alpha(\xi'')^{-1}\bigr)(x_0,z_0)
= \bigl(\xi' \xi'' x_0 \eta'' \eta' (\eta'')^{-1}, \alpha(\xi' \eta')^{-1} \alpha(\xi'')^{-1} z_0 \bigr)
\]
\[
= \bigl( \xi' \xi'', \alpha(\xi' \xi'')^{-1} \bigr)
(x_0,z_0)
\bigl(\eta'' \eta' (\eta'')^{-1}, \alpha(\eta')^{-1}\bigr)
\in H(x_0,z_0)H.
\]
Since $(x_0,z_0)$ is a key, (H5) implies that $\bigl(\zeta \xi'', w \alpha(\xi'')^{-1}\bigr) \in H$.
In other words, $\alpha(\zeta \xi'')^{-1} = w \alpha(\xi'')^{-1}$, so that $(\zeta,w) = \bigl( \zeta, \alpha(\zeta)^{-1} \bigr) \in H$.
Therefore (H5) holds for $b$.
\end{proof}

\begin{theorem}[Higman pair detector]
\label{thm.radicalization to Higman}
The following are equivalent for any choice of $x\in G^* \setminus G^*_1$:
\begin{itemize}
\item[(a)]
The radicalization of $(G^*,G^*_1,\alpha)$ is a Higman pair.
\item[(b)]
Every $\xi \in G^*_1$ for which $x\xi x^{-1} \in G^*_1$ satisfies $\alpha(x\xi x^{-1}) = \alpha(\xi)$.
\end{itemize}
\end{theorem}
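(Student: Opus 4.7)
The plan is to reduce condition (a) to a single axiom by choosing a concrete candidate for a key. Observe that (H1) and the equality $N_{\widetilde{G}^*}(H) = \widetilde{G}_0^*$ are supplied for free by Lemma~\ref{lem.normalizer}, while (H2) is automatic since $\widetilde{G}_0^*/H \cong C_r$ is abelian. The content of the theorem therefore lies entirely in (H3)--(H5), and my plan is to exhibit a single explicit $b \in \widetilde{G}^* \setminus \widetilde{G}_0^*$ for which (H3) and (H4) hold \emph{unconditionally}, and then show that (H5) for this $b$ is exactly equivalent to condition~(b).

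To build the candidate, I first use double transitivity: Proposition~\ref{prop: char of 2tran} says $G^*$ acts doubly transitively on $G^*/G_0^*$, so $x$ and $x^{-1}$ both lie in the unique nontrivial double coset $G_0^* x G_0^*$, furnishing $\xi_0, \eta_0 \in G_0^*$ with $x^{-1} = \xi_0 x \eta_0$. Since $r = 2r'$ and $\alpha(\xi_0 \eta_0) \in C_{r'}$, some $z \in C_r$ satisfies $z^2 = \alpha(\xi_0 \eta_0)$; set $b = (x, z)$. The direct computations in the proof of Lemma~\ref{lem.key finder} verify (H3) and (H4) for this $b$ using only the identity $z^{-1} = z\, \alpha(\xi_0 \eta_0)^{-1}$ and the fact that $H \trianglelefteq \widetilde{G}_0^*$; no Higman pair hypothesis enters. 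Consequently $b$ is a key if and only if it also satisfies (H5). On the other hand, if (a) holds at all, Lemma~\ref{lem.key finder} forces this specific $b$ to be a key, so (a) is equivalent to (H5) holding for $b$.

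It remains to prove (H5) for $b$ is equivalent to condition~(b). For (b) $\Rightarrow$ (H5), suppose $a = (\zeta, w) \in \widetilde{G}_0^*$ and $ab \in HbH$. Expanding a double coset factorization yields $\xi', \eta' \in G_0^*$ with $\zeta x = \xi' x \eta'$ and $w = \alpha(\xi' \eta')^{-1}$; rearranging gives $x \eta' x^{-1} = (\xi')^{-1} \zeta \in G_0^*$. Hypothesis (b) applied to $\eta'$ then gives $\alpha(x \eta' x^{-1}) = \alpha(\eta')$, hence $\alpha(\zeta) = \alpha(\xi')\,\alpha(x\eta' x^{-1}) = \alpha(\xi'\eta')$, so $w = \alpha(\zeta)^{-1}$ and $a \in H$. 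Conversely, if (b) fails, take $\xi \in G_0^*$ with $\zeta := x \xi x^{-1} \in G_0^*$ and $\alpha(\zeta) \neq \alpha(\xi)$, and set $a = (\zeta, \alpha(\xi)^{-1}) \in \widetilde{G}_0^* \setminus H$. A direct computation gives
\[ ab = (\zeta x, \alpha(\xi)^{-1} z) = (x\xi, \alpha(\xi)^{-1} z) = b \cdot \bigl(\xi, \alpha(\xi)^{-1}\bigr) \in bH \subseteq HbH, \]
violating (H5).

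The main obstacle is the bookkeeping in (b) $\Rightarrow$ (H5): one has to extract a factorization of $\zeta$ of the form $\xi' \cdot (x \eta' x^{-1})$ with both factors in $G_0^*$ so that (b) can be invoked on $\eta'$, while simultaneously tracking the $C_r$-coordinate to confirm $w = \alpha(\zeta)^{-1}$. Everything else is symbol-pushing inside the direct product $\widetilde{G}^* = G^* \times C_r$.
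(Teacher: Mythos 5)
Your proof is correct and follows essentially the same route as the paper: the same candidate key $b=(x,z)$ built from $x^{-1}=\xi_0 x\eta_0$ and $z^2=\alpha(\xi_0\eta_0)$, the same observation that (H1)--(H4) hold unconditionally, the same invocation of the key finder lemma in the forward direction, and the same double-coset computation reducing everything to (H5). The only cosmetic difference is that you phrase one direction as the contrapositive (exhibiting an explicit violation of (H5) when (b) fails), which is the same calculation the paper runs forwards.
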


We emphasize that the equivalence of (a) and (b) holds for \emph{any} choice of $x \in G^* \setminus G_1^*$, despite the fact that $x$ appears in (b) but not in (a).
As we will see, that flexibility makes this tool powerful.

Theorem~\ref{thm.radicalization to Higman} implies that the radicalization of $(G^*,G^*_1,\alpha)$ is a Higman pair only if $\ker \alpha $ is \textbf{strongly closed} in $G_1^*$ with respect to $G^*$.
That is, for any $x \in G^*$, $x(\ker \alpha)x^{-1} \cap G_1^* \leq \ker \alpha$.
Related conditions for doubly transitive two-graphs appear in Theorem~6.1 of~\cite{Taylor:77}, Theorem~6.1 of~\cite{Cameron:77}, and Remark~7.6 of~\cite{Seidel:76}.

\begin{proof}[Proof of Theorem~\ref{thm.radicalization to Higman}]
Assume (a) holds.
By Lemma~\ref{lem.key finder}, there exists $z \in C_r$ such that $b:=(x,z)$ is a key for the Higman pair $(\widetilde{G}^*,H)$.
Choose $\xi \in G_1^*$ such that $x \xi x^{-1} = \eta \in G^*_1$, as in (b).
Then
\[
\bigl(\xi \eta^{-1}, 1\bigr)(x,z)
= \bigl(\xi x \xi^{-1} x, z\bigr)
= \bigl(\xi, \alpha(\xi)^{-1} \bigr) (x,z) \bigl( \xi^{-1}, \alpha(\xi) \bigr)
\in H(x,z)H.
\]
By (H5), $\bigl(\xi \eta^{-1}, 1\bigr) \in H$. Consequently, $\alpha\bigl(\xi \eta^{-1}\bigr) = 1$ and $\alpha\bigl(x\xi x^{-1}\bigr) = \alpha(\eta) = \alpha(\xi)$.

Conversely, suppose (b) holds.
By Lemma~\ref{lem.normalizer} and Proposition~\ref{prop: char of 2tran}, there exist $\eta, \zeta \in G_1^*$ such that $x^{-1} = \eta x \zeta$.
Let $z \in C_r$ be such that $z^2 = \alpha(\eta \zeta)$.
We claim that $(\widetilde{G}^*,H)$ is a Higman pair with key $b := (x,z)$.
Lemma~\ref{lem.normalizer} supplies (H1), while (H2) holds by the first isomorphism theorem $\widetilde{G}^*_1 / H \cong C_r$.
Calculations similar to those in~\eqref{eq:H3} and~\eqref{eq:H4} verify (H3) and (H4).

To prove (H5), suppose $(\zeta',w) \in \widetilde{G}^*_1$ satisfies $(\zeta',w)(x,z) \in H(x,z)H$.
Then there exist $\xi',\eta' \in G_1^*$ such that
\[ (\zeta' x, wz) = \bigl(\xi', \alpha(\xi')^{-1}\bigr) (x,z) \bigl( \eta', \alpha(\eta')^{-1} \bigr). \]
That is, $\zeta' x = \xi' x \eta'$ and $w = \alpha( \xi' \eta')^{-1}$.
We can rewrite the former relation as $x \eta' x^{-1} = (\xi')^{-1} \zeta' \in G_1^*$.
Then (b) gives $\alpha(\eta') = \alpha\bigl( (\xi')^{-1} \zeta')$.
Therefore $\alpha(\zeta')^{-1} = \alpha(\xi' \eta')^{-1} = w$, and $(\zeta', w) \in H$.
\end{proof}

\begin{lemma}[Roux from radicalization]
\label{lem.radicalization's roux}
Suppose the radicalization $(\tilde{G}^*,H)$ of $(G^*,G_1^*,\alpha)$ is a Higman pair with key $(x,z)$.
Then the roux $B\in\mathbb{C}[C_r]^{n\times n}$ constructed in Proposition~\ref{prop.higman's roux} can be found as follows:
Choose left coset representatives $\{x_j\}_{j\in[n]}$ for $G_1^*$ in $G^*$.
Given $i\neq j$, there exist $\xi,\eta\in G_1^*$ for which $x_i^{-1}x_j=\xi x\eta$.
For any such choice of $\xi$ and $\eta$, define $B_{ij}=\alpha(\xi\eta)z^{-1}$, and set $B_{ii}=0$.
Then $B$ is a roux for $C_r$, and the roux scheme generated by $B$ is isomorphic to the Schurian scheme of $(\tilde{G}^*,H)$.
In particular, if $\alpha$ is real valued and $z\in\{\pm1\}$, then all lines arising from $B$ (in the sense of Proposition~\ref{prop.roux signature}) are real.
\end{lemma}

\begin{proof}
We apply Proposition~\ref{prop.higman's roux} with $b:=(x,z)$.
Lemma~\ref{lem.normalizer} gives that $K:=N_{\tilde{G}^*}(H) = G_1^* \times C_r$.
We may therefore select $\{\tilde{x}_j \}_{j\in [n]} := 
\{ (x_j,1) \}_{j \in [n]}$ as left coset representatives of $K$ in $\tilde{G}^* = G^* \times C_r$.
Furthermore, we may select $\{ a_w \}_{w \in C_r} := \{ (1,w) \}_{w \in C_r}$ as coset representatives of $H$ in $K$; indeed, $\tilde{\alpha}$ maps $K$ onto $C_r$ with kernel $H$, and $\tilde{\alpha}(1,w) = w$ for each $w \in C_r$.
Following Proposition~\ref{prop.higman's roux}, we seek $w \in C_r$ for which $\tilde{x}_i^{-1} \tilde{x}_j \in H a_w b H$.
Recalling the definition of $H$ in~\eqref{eq.H as kernel}, then for any choice of $w \in C_r$ and $\xi,\eta \in G_1^*$, we have
\[
(\xi,\alpha(\xi)^{-1}) a_w b (\eta, \alpha(\eta)^{-1})
= (\xi x \eta, \alpha(\xi \eta)^{-1} w z).
\]
This equals $\tilde{x}_i^{-1} \tilde{x}_j$ if and only if $x_i^{-1} x_j = \xi x \eta$ and $w = \alpha(\xi \eta) z^{-1}$.
There exist $\xi,\eta \in G_1^*$ for which this occurs by Proposition~\ref{prop.double cosets}, and the desired result follows immediately from Proposition~\ref{prop.higman's roux}.
For the ``in particular'' part of the result, assume $\alpha$ is real valued and $z\in\{\pm1\}$.
Then every off-diagonal entry of $B$ has order $2$, and evaluating at any character results in a real signature matrix, which in turn determines real lines, by Proposition~\ref{prop:real lines are real}.
\end{proof}

\begin{lemma}[Roux parameters for radicalization]
\label{lem.cgs of radical roux}
Assume $(\widetilde{G}^*,H)$ is a Higman pair, and let $(x,z)$ be any key.
Then the roux for $C_r\cong \widetilde{G}_1^*/H$ constructed in Proposition~\ref{prop.higman's roux} has parameters $\{c_{w}\}_{w\in C_r}$ given by
\[
c_{w}
=\frac{n-1}{|G^*_1|}\cdot\Big|\Big\{\zeta \in G^*_1:
\exists \xi,\eta \in G^*_1
\text{ s.t. }
x\zeta x^{-1}=\xi x\eta
\text{ and }
\alpha\bigl( \xi \eta \zeta^{-1} \bigr) z^{-1} = w
\Big\}\Big|.
\]
\end{lemma}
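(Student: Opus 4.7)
The plan is to apply Proposition~\ref{prop.higman's roux} directly with well-chosen transversals and then unravel the resulting expression. Proposition~\ref{prop.higman's roux} expresses the roux parameter as $c_g = \frac{n-1}{|H|} |bHb^{-1} \cap Ha_g bH|$, so I need to do three things: identify $|H|$, pick a convenient transversal $\{a_g\}$ of $H$ in $\widetilde{G}_0^* = N_{\widetilde{G}^*}(H)$ (using Lemma~\ref{lem.normalizer}), and then count the intersection.

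First I would observe that the projection $(\xi, \alpha(\xi)^{-1}) \mapsto \xi$ is a bijection $H \to G_0^*$, so $|H| = |G_0^*|$; this accounts for the prefactor $\frac{n-1}{|G_0^*|}$. For the transversal, the obvious choice is $a_w = (1,w)$ for $w\in C_r$, since $\widetilde{\alpha}(1,w) = w$ realizes the isomorphism $\widetilde{G}_0^*/H \cong C_r$.

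The heart of the proof is a direct computation with $b = (x,z)$. Expanding definitions gives
\[ bHb^{-1} = \bigl\{ \bigl(x\zeta x^{-1}, \alpha(\zeta)^{-1}\bigr) : \zeta \in G_0^* \bigr\}, \]
and any element of $Ha_w bH$ has the form
\[ \bigl(\xi, \alpha(\xi)^{-1}\bigr)(1,w)(x,z)\bigl(\eta, \alpha(\eta)^{-1}\bigr) = \bigl(\xi x \eta,\, wz\,\alpha(\xi\eta)^{-1}\bigr) \]
for some $\xi,\eta \in G_0^*$. Setting a typical element of $bHb^{-1}$ equal to a typical element of $Ha_wbH$ yields the two simultaneous conditions $x\zeta x^{-1} = \xi x\eta$ and $\alpha(\zeta)^{-1} = wz\,\alpha(\xi\eta)^{-1}$, the second of which rearranges to $\alpha(\xi\eta \zeta^{-1}) z^{-1} = w$. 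These are precisely the conditions defining the set in the lemma.

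Finally I would note that different $\zeta \in G_0^*$ produce distinct elements of $bHb^{-1}$ (their first coordinates $x\zeta x^{-1}$ already differ), so the map $\zeta \mapsto (x\zeta x^{-1}, \alpha(\zeta)^{-1})$ is a bijection from the indicated set of $\zeta$'s onto $bHb^{-1} \cap Ha_w bH$. Substituting the resulting cardinality into Proposition~\ref{prop.higman's roux} completes the proof. There is no conceptual obstacle here; the only thing to watch is bookkeeping of the various $\alpha$-values and inverses, and verifying that the choice of transversal $\{a_w\}$ really does represent $\widetilde{G}_0^*/H$ (which is immediate from $\widetilde{\alpha}(a_w) = w$).
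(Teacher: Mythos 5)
Your proposal is correct and follows essentially the same route as the paper's proof: the same transversal $\{(1,w)\}_{w\in C_r}$, the same explicit computations of $bHb^{-1}$ and $Ha_wbH$, and the same reduction to counting $\zeta$ via $|H|=|G_0^*|$. The only addition is your explicit remark that $\zeta\mapsto(x\zeta x^{-1},\alpha(\zeta)^{-1})$ is injective, which the paper leaves implicit but which is a harmless (and correct) bit of extra care.
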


\begin{proof}
It follows from~\eqref{eq.higman's roux}.
Specifically, we choose $\{ a_w \}_{w \in C_r} := \{ (1,w) \}_{w \in C_r}$ as a transversal for $C_r$ in $\widetilde{G}_1^*$.
Then
\[ H a_w b H = \bigl\{ \bigl( \xi, \alpha(\xi)^{-1} \bigr)(1,w)(x,z)\bigl( \eta, \alpha(\eta)^{-1} \bigr) : \xi, \eta \in G_1^* \bigr\} 
= \bigl\{ \bigl( \xi x \eta, \alpha(\xi \eta)^{-1} wz\bigr) : \xi, \eta \in G_1^* \bigr\},
\]
while
\[ bHb^{-1} = \bigl\{ \bigl( x \zeta x^{-1}, \alpha(\zeta)^{-1} \bigr) : \zeta \in G_1^* \big\}. \]
Consequently,
\[ | bHb^{-1} \cap H a_w b H | = \Big|\Big\{\zeta \in G^*_1:
\exists \xi,\eta \in G^*_1
\text{ s.t. }
x \zeta x^{-1} =  \xi x \eta
\text{ and }
\alpha(\xi \eta)^{-1}wz  = \alpha(\zeta)^{-1}
\Big\}\Big|.
\]
The formula for $c_w$ now follows from~\eqref{eq.higman's roux} since $|H| = |G_1^*|$.
\end{proof}

\begin{lemma}
\label{lem.alpha=1 is dumb}
If $\alpha = 1$, then all roux lines coming from the Higman pair $(\widetilde{G}^*,H)$ span a space of dimension $1$ or $n-1$.
\end{lemma}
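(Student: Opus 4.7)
The plan is a short direct computation that combines Lemma~\ref{lem.cgs of radical roux} with the rank formula in Proposition~\ref{prop.roux primitive idempotents}. The key observation is that $\alpha \equiv 1$ collapses the parameter-counting formula in Lemma~\ref{lem.cgs of radical roux} so that only one roux parameter is nonzero, and this in turn forces the quantity under the square root in the rank formula to be a perfect square.

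First I would set up the data. Since $\alpha \equiv 1$, we have $r' = |\operatorname{im} \alpha| = 1$, so $r = 2$ and $C_r = C_2 = \{\pm 1\}$; moreover $H = G_0^* \times \{1\}$. Invoking Lemma~\ref{lem.key finder}, select any key $b = (x,z)$ for $(\widetilde{G}^*, H)$ (with $z \in C_2$) and let $B \in \mathbb{C}[C_2]^{n\times n}$ be the roux supplied by Proposition~\ref{prop.higman's roux}. Applying Lemma~\ref{lem.cgs of radical roux}, the factor $\alpha(\xi \eta \zeta^{-1})$ is identically $1$, so the constraint $\alpha(\xi \eta \zeta^{-1}) z^{-1} = w$ reduces to $w = z^{-1}$. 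Hence $c_w = 0$ for $w \neq z^{-1}$, and since the parameters sum to $n-2$ by Proposition~\ref{prop.B squared}, we must have $c_{z^{-1}} = n-2$.

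The rank computation then proceeds cleanly. For each character $\beta \in \widehat{C_2}$, the values of $\beta$ lie in $\{\pm 1\}$, so
\[ \hat{c}_\beta = (n-2)\,\overline{\beta(z^{-1})} \in \{\pm(n-2)\}, \]
and therefore $(\hat{c}_\beta)^2 + 4(n-1) = (n-2)^2 + 4(n-1) = n^2$. Substituting into the formula of Proposition~\ref{prop.roux primitive idempotents} gives
\[ \mu_\beta^\epsilon = \frac{\hat{c}_\beta + \epsilon n}{2(n-1)} \in \Bigl\{ \pm 1,\ \pm \tfrac{1}{n-1} \Bigr\}, \]
so in each of the four cases $(\mu_\beta^\epsilon)^2 \in \{1,\ 1/(n-1)^2\}$, and the rank formula $d_\beta^\epsilon = n / (1 + (n-1)(\mu_\beta^\epsilon)^2)$ produces $d_\beta^\epsilon \in \{1, n-1\}$. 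Since every sequence of roux lines coming from $(\widetilde{G}^*, H)$ is determined by a primitive idempotent of the roux scheme (Proposition~\ref{prop.roux primitive idempotents} together with Proposition~\ref{prop.two reps of same lines}), the conclusion follows. There is no serious obstacle here: the entire argument is bookkeeping, and the only conceptual step is recognizing that $\alpha \equiv 1$ trivializes the roux-parameter formula by fixing the value of~$w$.
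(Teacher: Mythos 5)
Your proof is correct and follows essentially the same route as the paper: both use Lemma~\ref{lem.key finder} to produce a key, Lemma~\ref{lem.cgs of radical roux} to see that $\alpha\equiv 1$ forces all but one roux parameter to vanish, Proposition~\ref{prop.B squared} to pin the surviving parameter at $n-2$, and the rank formula of Proposition~\ref{prop.roux primitive idempotents} to conclude $d\in\{1,n-1\}$. The only cosmetic difference is that the paper fixes the key as $(x,1)$ so that $c_{-1}=0$ and $c_1=n-2$, whereas you carry a general $z\in C_2$ through the computation; both give the same $\hat{c}_\beta=\pm(n-2)$ and hence the same ranks.
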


\begin{proof}
We have $r'=1$ and $r=2$.
Notice that $(\widetilde{G}^*,H)$ is a Higman pair by Theorem~\ref{thm.radicalization to Higman}.
Given any $x \in G^* \setminus G_1^*$, Lemma~\ref{lem.key finder} implies that $b=(x,1)$ is a key.
There are just two roux parameters $\{ c_1,c_{-1}\}$, and $c_{-1} = 0$ by Lemma~\ref{lem.cgs of radical roux}.
The sum of the roux parameters is therefore $c_1 = n-2$, by Proposition~\ref{prop.B squared}.
In particular, the Fourier transform of $\{ c_w \}_{w \in C_2}$ is given by $\hat{c}_\beta \equiv n-2$, $\beta \in\widehat{C_2}$.
In the notation of Proposition~\ref{prop.roux primitive idempotents}, we have $\mu_\beta^\epsilon \equiv \frac{n-2 + \epsilon n}{2n-2}$.
That is,  $\mu_\beta^+ \equiv 1$ and $\mu_\beta^- \equiv -1/(n-1)$.
Therefore, $d_\beta^+ \equiv 1$ and $d_\beta^- \equiv n-1$.
By Proposition~\ref{prop.roux primitive idempotents}, any sequence of $n$ roux lines coming from the Higman pair $(\widetilde{G}^*,H)$ spans a space of dimension $1$ or $n-1$.
\end{proof}

\begin{lemma}
\label{lem.real radical Higman lines}
Assume $\alpha$ is real valued and $(\widetilde{G}^*,H)$ is a Higman pair.
If $b=(x,z)$ is any key for $(\widetilde{G}^*,H)$ and $z \in \{\pm1\}$, then all roux lines coming from $(\widetilde{G}^*,H)$ are real.
\end{lemma}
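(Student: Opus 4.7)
The plan is to combine the explicit formula for the roux parameters in Lemma~\ref{lem.cgs of radical roux} with the real roux lines detector (Proposition~\ref{prop.real line detector}). First I would note that since $\alpha \colon G_0^* \to \mathbb{T}$ is a character whose image lies in $\mathbb{R}$, we actually have $\operatorname{im} \alpha \subseteq \mathbb{R} \cap \mathbb{T} = \{\pm 1\}$. So both $\alpha$-values and the component $z$ of the key $b = (x,z)$ lie in $\{\pm 1\}$.

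Next I would let $\{c_w\}_{w \in C_r}$ denote the roux parameters of the roux $B \in \mathbb{C}[C_r]^{n \times n}$ produced by Proposition~\ref{prop.higman's roux} from the key $b$. By Lemma~\ref{lem.cgs of radical roux}, if $c_w \neq 0$ then there exist $\xi, \eta, \zeta \in G_0^*$ such that $w = \alpha(\xi \eta \zeta^{-1}) z^{-1}$. Since $\alpha(\xi \eta \zeta^{-1}) \in \{\pm 1\}$ and $z^{-1} \in \{\pm 1\}$, this forces $w \in \{\pm 1\}$. Thus the support of $\{c_w\}$ is contained in $\{\pm 1\} \subseteq C_r$.

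From here the conclusion is immediate: for every character $\beta \in \widehat{C_r}$, we have $\beta(\pm 1) \in \{\pm 1\} \subset \mathbb{R}$, so $\beta(w) \in \mathbb{R}$ for each $w$ with $c_w \neq 0$. Proposition~\ref{prop.real line detector} then asserts that $\hat{\beta}(B)$ is the signature matrix of a real ETF. Since by Proposition~\ref{prop.roux signature} every sequence of roux lines coming from $B$ arises from $\hat{\beta}(B)$ for some $\beta \in \widehat{C_r}$, all such line sequences are real, as claimed.

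I do not anticipate any real obstacle: the only subtlety is to remember that characters have image in the unit circle, so real-valued characters take values in $\{\pm 1\}$; the rest is just plugging in to the formula in Lemma~\ref{lem.cgs of radical roux} and quoting Proposition~\ref{prop.real line detector}.
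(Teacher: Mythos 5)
Your proof is correct and follows essentially the same route as the paper: both arguments use Lemma~\ref{lem.cgs of radical roux} to show the roux parameters are supported on $\{\pm 1\}\subseteq C_r$, then note every $\beta\in\widehat{C_r}$ is real-valued on $\{\pm1\}$ and invoke the real roux lines detector (Proposition~\ref{prop.real line detector}). The only cosmetic difference is that the paper specializes immediately to $r'=2$, $r=4$ and names the vanishing parameters $c_{\mathrm i}=c_{-\mathrm i}=0$, whereas you argue with general $r$; the content is identical.
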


\begin{proof}
We apply Proposition~\ref{prop.real line detector}, where $\Gamma = C_r$.
Since $\alpha$ is real valued, we either have $r' = 1$ or $r'=2$.
If $r'=1$, then $r=2$, and the desired result is immediate from Proposition~\ref{prop.real line detector}.
Now assume $r'=2$, so that $r=4$.
Since $z \in \{ \pm 1 \}$, the roux parameters $\{ c_w \}_{w \in C_4}$ satisfy $c_{\mathrm{i}} = c_{-\mathrm{i}} = 0$ by Lemma~\ref{lem.cgs of radical roux}.
For any $\beta \in \widehat{C_4}$, $\beta(1) = 1$ and $\beta(-1) \in \{ \pm 1 \}$.
In particular, both are real.
By Proposition~\ref{prop.real line detector}, the roux lines corresponding to any $\beta \in \widehat{C_4}$ are real.
\end{proof}

Combining Lemmas~\ref{lem.key finder} and~\ref{lem.real radical Higman lines}, we immediately obtain the following.

\begin{lemma}
\label{lem:real involution}
Assume $\alpha$ is real-valued and $(\widetilde{G}^*,H)$ is a Higman pair.
If there exists $x \in G^* \setminus G_1^*$ with $x = x^{-1}$, then all roux lines coming from $(\widetilde{G}^*,H)$ are real.
\end{lemma}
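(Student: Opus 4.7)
The plan is to deduce this directly by feeding the involution $x$ into Lemma~\ref{lem.key finder} with the trivial choice $\xi = \eta = 1$, and then invoking Lemma~\ref{lem.real radical Higman lines} on the resulting key.

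In detail, the hypothesis $x = x^{-1}$ supplies the factorization $x^{-1} = 1 \cdot x \cdot 1$ required by Lemma~\ref{lem.key finder}. Taking $\xi = \eta = 1$ in that lemma gives $\alpha(\xi \eta) = \alpha(1) = 1$, so the admissible scalars $z \in C_r$ with $z^2 = \alpha(\xi \eta)$ are precisely $z \in \{ \pm 1 \}$. Lemma~\ref{lem.key finder} therefore produces a key $b = (x, z)$ for $(\widetilde{G}^*, H)$ with $z \in \{ \pm 1 \}$. Since $\alpha$ is assumed real-valued, Lemma~\ref{lem.real radical Higman lines} applies verbatim to this key and concludes that every sequence of roux lines coming from $(\widetilde{G}^*, H)$ is real.

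The only mild subtlety is that Lemma~\ref{lem.key finder} is phrased as an existence statement for $(\xi, \eta)$, whereas here we need to use a \emph{specific} choice; but inspecting its proof shows the construction works for any decomposition $x^{-1} = \xi x \eta$ with $\xi, \eta \in G_0^*$, so taking $\xi = \eta = 1$ is legitimate. I do not foresee any real obstacle, as the argument is simply that $x$ being an involution forces the ``square root'' step in the definition of the key to land in $\{ \pm 1 \}$, which is exactly the input required by Lemma~\ref{lem.real radical Higman lines}.
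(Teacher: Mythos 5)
Your proof is correct and takes exactly the paper's route: the paper obtains Lemma~\ref{lem:real involution} by ``combining Lemmas~\ref{lem.key finder} and~\ref{lem.real radical Higman lines},'' i.e.\ the involution hypothesis gives the decomposition $x^{-1} = 1\cdot x\cdot 1$, so the key's scalar satisfies $z^2 = \alpha(1) = 1$ and hence $z \in \{\pm 1\}$, which is precisely the input to Lemma~\ref{lem.real radical Higman lines}. Your observation that the key finder works for any valid decomposition $x^{-1} = \xi x \eta$ (not just some existential one) is the right reading of that lemma and is borne out by its proof.
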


\section{Partial classification of doubly transitive lines}

Having developed the necessary machinery, we now put our tools to use in order to partially classify doubly transitive lines, as summarized in Theorem~\ref{thm:main result}.
In this section, we determine all sequences of linearly dependent doubly transitive lines whose automorphism groups are almost simple.

\subsection{Overview of classification strategy}

Doubly transitive permutation groups have been classified as a consequence of the classification of finite simple groups.
The following list is taken from~\cite{Kantor:85}.
Most of the groups below are not literally permutation groups, but each has a well-known permutation representation whose image is a doubly transitive subgroup of $S_n$.
Some of the groups have two different doubly transitive permutation representations, but when that happens they are exchanged by an outer automorphism of the source, and in particular, they produce the same range in $S_n$.
In that sense, each item below corresponds to a unique permutation group up to permutation equivalence.

\begin{proposition}[\cite{Kantor:85}]
\label{prop.classification}
If $S$ is a doubly transitive permutation group of a set of size $n$, then one of the following cases occurs:
\begin{itemize}
\item[(I)]
$S$ is of affine type.
\item[(II)]
$S$ has a normal subgroup $G \unlhd S\leq\operatorname{Aut}(G)$, where $G$ and $n$ are
\begin{itemize}
\item[(1)]
Alternating.
\begin{itemize}
\item[(i)]
$A_n$, $n\geq5$.
\end{itemize}
\item[(2)]
Lie type.
\begin{itemize}
\item[(ii)]
$\operatorname{PSL}(m,q)$, $m\geq2$, $n=(q^m-1)/(q-1)$, $(m,q)\neq(2,2),(2,3)$.
\item[(iii)]
$\operatorname{PSU}(3,q)$, $n=q^3+1$, $q>2$.
\item[(iv)]
$\operatorname{Sz}(q)$, $n=q^2+1$, $q=2^{2m+1}>2$.
\item[(v)]
$^2G_2(q)$, $n=q^3+1$, $q=3^{2m+1}$.
\end{itemize}
\item[(3)]
Other.
\begin{itemize}
\item[(vi)]
$\operatorname{Sp}(2m,2)$, $m\geq3$, $n=2^{2m-1}\pm2^{m-1}$.
\item[(vii)]
$\operatorname{PSL}(2,11)$, $n=11$.
\item[(viii)]
$A_7$, $n=15$.
\item[(ix)]
$M_n$, $n\in\{11,12,22,23,24\}$.
\item[(x)]
$M_{11}$, $n=12$.
\item[(xi)]
$\operatorname{HS}$ (the Higman--Sims group), $n=176$.
\item[(xii)]
$\operatorname{Co}_3$ (the third Conway group), $n=276$.
\end{itemize}
\end{itemize}
\end{itemize}
\end{proposition}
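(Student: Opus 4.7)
The plan is to reduce this to the classification of finite simple groups (CFSG) together with an enumeration of doubly transitive actions for each simple group. The starting point is Burnside's theorem, which we already invoked in the introduction: any finite doubly transitive permutation group $S$ has a unique minimal normal subgroup (its socle) that is either a nonabelian simple group $G$ (the almost simple case, (A)) or an elementary abelian regular subgroup (the affine case, (B)). So the real work is entirely in case (A): once the socle $G = \operatorname{soc} S$ is known, one has $G \trianglelefteq S \leq \operatorname{Aut}(G)$ automatically, and it remains to find all pairs $(G,n)$ for which a nonabelian simple group $G$ admits a faithful doubly transitive action on $n$ points.

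The next step is to reformulate the question using the fact (Proposition~\ref{prop: char of 2tran}) that a doubly transitive action of $G$ on $n$ points is determined up to isomorphism by a maximal subgroup $G_0 \leq G$ of index $n$ such that $G_0$ acts transitively on the remaining $n-1$ points. So I would proceed as follows:\ invoke CFSG to enumerate the candidate socles $G$ (alternating groups $A_m$, groups of Lie type, and the $26$ sporadic groups), and for each $G$ consult the list of maximal subgroups together with their permutation character to extract those that yield doubly transitive actions. The alternating family contributes the natural action of $A_n$ on $n$ points (plus the exceptional action of $A_7$ on $15$ points), and the remaining Lie-type families contribute their natural actions on projective points, isotropic points, Suzuki ovoids, Ree ovoids, and certain cosets of orthogonal subgroups in $\operatorname{Sp}(2m,2)$; the sporadic contributions are the Mathieu actions, the two exceptional Mathieu actions, and the Higman--Sims and Conway $\operatorname{Co}_3$ actions.

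The truly hard part is the Lie-type case, and historically this was carried out by Curtis, Kantor, and Seitz using the Borel--Tits structure theory of parabolic subgroups\:\ one shows that a doubly transitive action of a Lie-type simple group $G$ of rank $\geq 2$ corresponds either to the action on cosets of an end-node maximal parabolic (yielding $\operatorname{PSL}(m,q)$ on projective space) or to an exceptional ovoid-type action for low rank (yielding the $\operatorname{PSU}(3,q)$, $\operatorname{Sz}(q)$, and ${}^2 G_2(q)$ cases), with only finitely many sporadic exceptions. For rank $1$, the list reduces to $\operatorname{PSL}(2,q)$, $\operatorname{PSU}(3,q)$, $\operatorname{Sz}(q)$, and ${}^2 G_2(q)$ acting on their natural geometries, plus the two small exceptional doubly transitive actions of $\operatorname{PSL}(2,11)$ on $11$ points and $A_7$ on $15$ points.

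Finally, I would verify that the indices $n$ match the formulas listed in (ii)--(xii) by computing $|G|/|G_0|$ for each maximal subgroup $G_0$ identified above, and check that no doubly transitive action has been duplicated under an outer automorphism of $G$ (which matters for sorting out isomorphism types among $\operatorname{PSL}$ actions on points versus hyperplanes). The only gap this plan leaves open is the affine case (B), which the present paper explicitly defers; case (B) is typically completed using Hering's theorem on the possible point stabilizers $G_0 \leq \operatorname{GL}(m,p)$ acting transitively on the nonzero vectors of $\mathbb{F}_p^m$, but since the excerpt bundles (B) into a single line, nothing further is required here.
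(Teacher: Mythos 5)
Your outline is essentially a faithful roadmap of how Proposition~\ref{prop.classification} is established in the literature, but you should be aware that the paper itself does not prove this statement at all: it is quoted verbatim from Kantor's survey~\cite{Kantor:85} and used as a black box, which is why the proposition carries a citation rather than a proof environment. Measured against the actual literature proof, your decomposition is correct: Burnside's theorem reduces to the socle dichotomy (almost simple versus affine), Proposition~\ref{prop: char of 2tran} converts the almost simple case into a search for maximal subgroups $G_0$ with $G_0$ transitive on the remaining points, and the heavy lifting for groups of Lie type is the Curtis--Kantor--Seitz theorem on parabolic and ovoid-type actions, with the sporadic and alternating contributions read off from known subgroup data. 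What your sketch buys is a transparent account of where each line of the list comes from; what it cannot supply --- and what no reasonable write-up could --- is a self-contained argument, since every step ultimately leans on the classification of finite simple groups, the Curtis--Kantor--Seitz analysis, and the determination of maximal subgroups of the sporadic groups. Two small points of care: the doubly transitive action of $A_7$ on $15$ points is not a rank-one Lie-type exception but arises from the embedding $A_7 < A_8 \cong \operatorname{PSL}(4,2)$ acting on the points of $\operatorname{PG}(3,2)$, so it is better filed with the ``finitely many sporadic exceptions''; and your final duplication check under outer automorphisms is exactly the reason $\operatorname{PSL}(m,q)$ appears once in the list even though it acts doubly transitively on both points and hyperplanes for $m \geq 3$. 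Neither affects the correctness of the plan.
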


To prove our main result, we will consider each permutation group $G$ above in turn, and find all instances of doubly transitive lines whose automorphism group contains $G$.

We can quickly cross some groups off our list with the help of the following lemma, which extends Theorem~II.6 in~\cite{King:19}.
Recall that a permutation group $G \leq S_n$ is \textbf{triply transitive} if for any ordered triple of distinct indices $(i,j,k)$ in $[n]$ and any other such ordered triple $(i',j',k')$ there exists $\sigma \in G$ such that $\sigma(l) = l'$ for every $l \in \{ i,j,k\}$.

\begin{lemma}
\label{lem.3tran}
Let $\mathscr{L}$ be a sequence of $n>d$ lines with span $\mathbb{C}^d$.
If $\operatorname{Aut} \mathscr{L}$ is triply transitive, then $d\in\{1,n-1\}$, $\mathscr{L}$ is real, and $\operatorname{Aut} \mathscr{L} =S_n$.
\end{lemma}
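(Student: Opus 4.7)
The plan is to exploit the fact (already used in the proof of Proposition~\ref{prop:real lines are two-graphs}) that the triple product of three ordered Gram entries is a unitary invariant of the line sequence, and to combine this with triple transitivity to force very rigid constraints on the Gram matrix.

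First I would choose unit-norm representatives $\Phi = \{\varphi_j\}_{j \in [n]}$, which form an ETF with some common off-diagonal magnitude $\mu > 0$ by Proposition~\ref{prop: 2tran implies ETF}, and set
\[ T_{ijk} = \mu^{-3}\langle\varphi_i,\varphi_j\rangle\langle\varphi_j,\varphi_k\rangle\langle\varphi_k,\varphi_i\rangle \in \mathbb{T} \]
for every ordered distinct triple. Given $\sigma \in S$ taking $(i,j,k)$ to $(i',j',k')$, choose a unitary $Q$ realizing $\sigma$ and unimodular scalars $\omega_m$ with $Q\varphi_m = \omega_m\varphi_{\sigma(m)}$; substituting into the triple product, the $\omega$-factors cancel cyclically and $T_{ijk} = T_{i'j'k'}$. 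Triple transitivity then collapses $T_{ijk}$ to a single constant $T$, while transposing two indices conjugates the triple product, so $T = \overline{T} \in \{+1,-1\}$.

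Next, rephase so $\langle\varphi_j,\varphi_1\rangle = \mu$ for every $j \geq 2$. The equation $T_{1jk} = T$ then forces $\langle\varphi_j,\varphi_k\rangle = T\mu \in \mathbb{R}$ for all distinct $j, k \geq 2$, so the Gram matrix is real and $\mathscr{L}$ is real in the sense of Definition~\ref{def:real lines}. I would then split on the sign $T$ and read off the rank. If $T = +1$, the Gram matrix equals $(1-\mu)I + \mu J$ with eigenvalues $1 - \mu$ and $1 + (n-1)\mu$; since $\mu > 0$, the only way to get rank $d < n$ is $\mu = 1$, forcing $d = 1$. If $T = -1$, an additional sign flip on $\varphi_1$ turns the Gram matrix into $(1+\mu)I - \mu J$ with eigenvalues $1 + \mu$ and $1 - (n-1)\mu$, and rank $d < n$ now forces $\mu = 1/(n-1)$, hence $d = n-1$.

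Finally, the equality $S = S_n$ is immediate in each case: when $d = 1$ all the lines coincide and any permutation is trivially an automorphism, while when $d = n-1$ the frame $\Phi$ is (up to rephasing) the regular simplex in $\mathbb{R}^{n-1}$, whose symmetry group is $S_n$. Since $S \leq S_n$ by definition, equality follows. Overall the argument is elementary once the triple-product invariant is in hand; the only delicate step is the phase bookkeeping used to reduce the Gram matrix to its real canonical form, and that is where a misstated invariance could slip in, so those calculations are where I would be most careful.
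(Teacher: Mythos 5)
Your proposal is correct and follows essentially the same route as the paper: both arguments hinge on the unitary invariance of the normalized triple products, which triple transitivity collapses to a single constant that must be real (hence $\pm 1$) by swapping two indices, after which rephasing against $\varphi_1$ makes the Gram matrix real with constant off-diagonal entries. The only difference is the endgame --- the paper reads off $d \in \{1,n-1\}$ and $\operatorname{Aut}\mathscr{L} = S_n$ from the two-graph correspondence of Proposition~\ref{prop:real lines are two-graphs} applied to $\mathcal{T}_{\mathscr{L}}$ being empty or all $3$-subsets, whereas you compute the eigenvalues of $(1\mp\mu)I \pm \mu J$ directly and identify the $d=n-1$ configuration as the simplex; both work, though you tacitly assume $n \geq 3$, while the paper dispatches $n=2$ separately as trivial.
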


\begin{proof}
If $n=2$, then $d=1$ and the result is trivial.
Hence we can assume $n \geq 3$.
Then $\operatorname{Aut} \mathscr{L}$ is doubly transitive, and $\mathscr{L}$ is equiangular by Proposition~\ref{prop: 2tran implies ETF}.
Choose unit-norm representatives $\{ \varphi_j \}_{j\in [n]}$ for $\mathscr{L} = \{ \ell_j \}_{j\in [n]}$, and put $\mu := | \langle \varphi_i, \varphi_j \rangle|$ for $i \neq j$.
By choosing new representatives for $\ell_2,\dotsc,\ell_n$ if necessary, we may assume that $\langle \varphi_1,\varphi_j \rangle = \mu$ whenever $2 \leq j \leq n$.
Given any pair of 3-subsets $\{i,j,k\}, \{i',j',k'\} \subset [n]$ there exists a unitary $U \in \operatorname{U}(d)$ and constants $\omega_l \in \mathbb{T}$ such that $U \varphi_l = \omega_l \varphi_{l'}$ for $l \in \{ i,j,k\}$.
Consequently,
\begin{equation}
\label{eq:triple products are equal}
\langle \varphi_i, \varphi_j \rangle \langle \varphi_j, \varphi_k \rangle \langle \varphi_k, \varphi_i \rangle = \langle \varphi_{i'}, \varphi_{j'} \rangle \langle \varphi_{j'}, \varphi_{k'} \rangle \langle \varphi_{k'}, \varphi_{i'} \rangle =: C.
\end{equation}
Taking $k=1$ shows that $\langle \varphi_i, \varphi_j \rangle = C/\mu^2$ whenever $2 \leq i\neq j \leq n$, and $C = \overline{C}$ since we can switch $i$ and $j$.
Consequently, $\mathscr{L}$ is real.
In the notation of Proposition~\ref{prop:real lines are two-graphs}, \eqref{eq:triple products are equal} implies that we either have $\mathcal{T}_{\mathscr{L}} = \emptyset$ or $\mathcal{T}_{\mathscr{L}} = \{ \text{all 3-subsets of }[n]\}$.
In either case, the two-graph $([n],\mathcal{T}_{\mathscr{L}})$ has automorphism group $S_n$.
By Proposition~\ref{prop:real lines are two-graphs}, $\operatorname{Aut}\mathscr{L} = S_n$, and some choice of unit-norm representatives for $\mathscr{L}$ has signature matrix $J-I$ or $I-J$.
In the former case $d=1$, and in the latter case $d=n-1$.
\end{proof}

For the remaining groups on our list, we apply the following broad strategy.
Let $G \leq S_n$ be a doubly transitive group from Proposition~\ref{prop.classification}.
Our first step is to obtain a Schur covering $\pi \colon G^* \to G$.
(This is the most difficult part in practice.
Thankfully, the multipliers of all the finite simple groups were computed as part of their classification, and a covering group can be found.)
Let $G_1 \leq G$ be the stabilizer of a point, and put $G_1^* := \pi^{-1}(G_1)$.
If there exists a sequence $\mathscr{L}$ of $n > d$ lines spanning $\mathbb{C}^d$ with $G \leq \operatorname{Aut}\mathscr{L}$, then Theorem~\ref{thm.radicalization} explains how to recover $\mathscr{L}$ from the radicalization $(\widetilde{G}^*,H)$ of $(G^*,G_1^*,\alpha)$, for some choice of linear character $\alpha \colon G_1^* \to \mathbb{T}$.
In particular, $(\widetilde{G}^*,H)$ must be a Higman pair.
Our plan is to iterate through all linear characters $\alpha \colon G_1^* \to \mathbb{T}$, find those for which $(\widetilde{G}^*,H)$ is a Higman pair, and describe the resulting lines.

In particular, for any choice of linear character $\alpha \colon G_1^* \to \mathbb{T}$, Theorem~\ref{thm.radicalization to Higman} provides a simple test to determine whether or not the radicalization $(\widetilde{G}^*,H)$ of $(G^*,G_1^*,\alpha)$ is a Higman pair.
When this test is affirmative, a sequence $\mathscr{L}$ of doubly transitive lines satisfying $G \leq \operatorname{Aut} \mathscr{L}$ exists by Theorem~\ref{thm.Higman Pair Theorem}.
For every such $\mathscr{L}$, it then remains to describe $\mathscr{L}$, determine whether or not $\mathscr{L}$ is real, and find the dimension of its span.

In many cases, Lemma~\ref{lem:real involution} provides a fast judgment that all lines coming from $(\widetilde{G}^*,H)$ are real.
In that case, our job is simple.
All real doubly transitive lines derive from Taylor's classification of doubly transitive two-graphs, hence they appear in Proposition~\ref{prop:real2tran}.

For the remaining cases, we dig deeper.
First, we apply Lemma~\ref{lem.key finder} to find a key for $(\widetilde{G}^*,H)$.
Then, we find the resulting roux with Lemma~\ref{lem.radicalization's roux}.
Next, we obtain roux parameters using Lemma~\ref{lem.cgs of radical roux}, and compute ranks of the primitive idempotents in the roux scheme with Proposition~\ref{prop.roux primitive idempotents}.
Finally, with the roux parameters in hand, Propositions~\ref{prop.two reps of same lines} and~\ref{prop.real line detector} team up to give a final determination of which primitive idempotents correspond to real lines, and which do not.

\subsection{Linear groups}
\label{subsec:linear}

We first consider $\operatorname{PSL}(m,q)$ and its doubly transitive action on the $1$-dimensional subspaces of $\mathbb{F}_q^m$.

\begin{example}
\label{ex.Paley conference}
Let $q$ be an odd prime power, take $X:=\mathbb{F}_q\cup\{\infty\}$, and consider the vectors $\{t_i\}_{i\in X}$ in $\mathbb{F}_q^2$ defined by $t_a:=[a,1]^\top$ for $a\in\mathbb{F}_q$
and $t_\infty:=[1,0]^\top$.
We will construct doubly transitive lines in $\mathbb{C}^{(q+1)/2}$ that are indexed by $X$ and have $\operatorname{PSL}(2,q)$ in their automorphism group.
The signature matrix can be described by composing a multiplicative character with a symplectic form.
Let $Q\leq\mathbb{F}_q^\times$ denote the multiplicative subgroup of index $2$, let $\chi\colon\mathbb{F}_q\to\mathbb{R}$ denote the quadratic character defined by
\[
\chi(a)
:=
\begin{cases}
1 & \text{if }a \in Q, \\
0 & \text{if }a = 0, \\
-1 & \text{if }a \not\in Q \cup\{0\},
\end{cases}
\]
and consider the symplectic form $[\cdot,\cdot]\colon\mathbb{F}_q^2\times\mathbb{F}_q^2\to\mathbb{F}_q$ defined by
\[
\Bigl[ [ a,b]^\top, [c,d]^\top \Bigr]:=ad-bc=\operatorname{det}\left(\left[\begin{array}{cc}a&c\\b&d\end{array}\right]\right).
\]
Finally, we define $A\in\mathbb{R}^{X\times X}$ by $A_{ij}=\chi([t_i,t_j])$, and put $\mathcal{S}:=z A\in\mathbb{C}^{X\times X}$, where
\[
z
:=
\begin{cases}
1 & \text{if } q \equiv 1 \bmod 4, \\
\mathrm{i} & \text{if } q \equiv 3 \bmod 4.
\end{cases}
\]
Then $\mathcal{S}$ is the signature matrix of an ETF of $q+1$ vectors in $\mathbb{C}^{(q+1)/2}$; see \cite{Renes:07} or \cite[p67]{Zauner:99}, for example.

To see that $\operatorname{PSL}(2,q)$ is in the automorphism group of these lines, first observe that each member of $\operatorname{PSL}(2,q)$ permutes the lines in $\mathbb{F}_q^2$ spanned by $\{t_i\}_{i\in X}$, which in turn determines a permutation of $X$.
Select any such permutation $\sigma$, and let $C$ denote a corresponding member of $\operatorname{SL}(2,q)$.
Then there exist scalars $\{\omega_i\}_{i\in X}$ in $\mathbb{F}_q^\times$ such that $t_{\sigma(i)}=\omega_iCt_i$.
Since $[\cdot,\cdot]$ is invariant under the action of $\operatorname{SL}(2,q)$, it follows that
\[
A_{\sigma(i),\sigma(j)}
=\chi([t_{\sigma(i)},t_{\sigma(j)}])
=\chi([\omega_iCt_i,\omega_jCt_j])
=\chi(\omega_i\omega_j[Ct_i,Ct_j])
=\chi(\omega_i) A_{ij} \chi(\omega_j).
\]
Letting $P\in\mathbb{R}^{X\times X}$ denote the matrix representation of $\sigma$ and $D\in\mathbb{R}^{X\times X}$ denote the diagonal matrix whose $i$th diagonal entry equals $\chi(\omega_i)$, this implies $PAP^{-1}=DAD$.
Multiplying both sides by $z$ then gives that $\mathcal{S}$ is switching equivalent to $P\mathcal{S}P^{-1}$.
Now Proposition~\ref{prop:switching equiv is unitary equiv} shows that $\sigma\in \operatorname{Aut} \mathscr{L}$.
(See Theorem~\ref{thm: PSL automorphism group} for the full automorphism group of $\mathscr{L}$.)
\end{example}

\begin{theorem}
\label{thm.linear groups}
Let $\mathscr{L}$ be a sequence of $n\geq 2d > 2$ lines with span $\mathbb{C}^d$ and doubly transitive automorphism group containing $\operatorname{PSL}(m,q)$ as in Proposition~\ref{prop.classification}(ii).
Then $m=2$, $d = (q+1)/2$, $n = q+1$, and $\mathscr{L}$ is unitarily equivalent to the lines of Example~\ref{ex.Paley conference}, where either
\begin{itemize}
\item[(a)]
$\mathscr{L}$ is real and $q\equiv1\bmod4$, or
\item[(b)]
$\mathscr{L}$ is not real and $3 < q\equiv3\bmod4$.
\end{itemize}
\end{theorem}

In general, the Schur cover of $\operatorname{PSL}(m,q)$ is $\operatorname{SL}(m,q)$, with a few exceptions, which we address with the following lemmas.

\begin{lemma}[Computer-assisted result]
\label{lem.PSL29}
Let $\mathscr{L}$ be a sequence of $n=10 \geq 2d > 2$ lines with span $\mathbb{C}^d$ and $\operatorname{Aut} \mathscr{L} \geq \operatorname{PSL}(2,9)$ as in Proposition~\ref{prop.classification}(ii).
Then $\mathscr{L}$ is real and unitarily equivalent to the lines of Example~\ref{ex.Paley conference} with $q = 9$ and $d = 5$.
\end{lemma}

\begin{proof}[Computer-assisted proof]
We used GAP~\cite{GAP} to find a Schur covering $\pi \colon G^* \to G$ of $G=\operatorname{PSL}(2,9)$.
Denote $G_1 \leq G$ for the stabilizer of a point, and $G_1^* = \pi^{-1}(G_1)$.
For each nontrivial linear character $\alpha \colon G_1^* \to \mathbb{T}$ we applied Theorem~\ref{thm.radicalization to Higman} to determine if the radicalization of $(G^*,G_1^*,\alpha)$ was a Higman pair (the trivial character being handled by Lemma~\ref{lem.alpha=1 is dumb}).
Exactly one choice of $\alpha$ led to a Higman pair in this way.
For that choice of $\alpha$, we found a key using Lemma~\ref{lem.key finder} and then computed the roux parameters using Proposition~\ref{prop.higman's roux}.
An application of Proposition~\ref{prop.real line detector} showed that the resulting lines were real.
Proposition~\ref{prop:real2tran} then implied that these lines were unique up to equivalence, from which it followed that they were constructed in Example~\ref{ex.Paley conference}.
(Our code is available online~\cite{github}.)
By Theorem~\ref{thm.radicalization}, this procedure captures every sequence $\mathscr{L}$ of $n=10 > d$ lines spanning $\mathbb{C}^d$ for which $\operatorname{Aut} \mathscr{L}$ contains $\operatorname{PSL}(2,9)$ in its doubly transitive action on 1-dimensional subspaces of $\mathbb{F}_9^2$.
\end{proof}

\begin{lemma}[Computer-assisted result]
\label{lem.PSL weirdos}
For each of
\[
(m,q)\in\{(2,4),(3,2),(4,2),(3,3),(3,4)\},
\]
there does not exist a sequence $\mathscr{L}$ of $n = \frac{q^m-1}{q-1} \geq 2d > 2$ doubly transitive lines with span $\mathbb{C}^d$ and $\operatorname{Aut} \mathscr{L} \geq\operatorname{PSL}(m,q)$ as in Proposition~\ref{prop.classification}(ii).
\end{lemma}

\begin{proof}[Computer-assisted proof]
The action of $\operatorname{PSL}(2,4)\cong A_5$ on $n=5$ points is triply transitive, so this case was eliminated by Lemma~\ref{lem.3tran}.
For every other choice of $(m,q)$ above, we proceeded as in the proof of Lemma~\ref{lem.PSL29}.
In every such case, there were no nontrivial characters $\alpha \colon G_1^* \to \mathbb{T}$ for which the radicalization of $(G^*,G_1^*,\alpha)$ was a Higman pair.
(Our code is available online~\cite{github}.)
\end{proof}

While the following result may be known, we were not able to locate a reference.
Hence, we supply our own proof.

\begin{proposition}
\label{prop.SL stabilizer}
Consider the action of $\operatorname{SL}(m,q)$ on the one-dimensional subspaces of $\mathbb{F}_q^m$, where matrices act on column vectors from the left.
Then the stabilizer of the line spanned by $[1,0,\ldots,0]^\top\in\mathbb{F}_q^m$ is
\[
G^*_1
=\left\{
\begin{bmatrix}
(\det M)^{-1}&u^\top\\0&M
\end{bmatrix}
: u\in\mathbb{F}_q^{m-1},M\in\operatorname{GL}(m-1,q)
\right\}.
\]
Furthermore, if $(m,q) \notin \{(2,2),(2,3), (3,2)\}$ then the linear characters $\alpha \colon G_1^* \to \mathbb{T}$ are in one-to-one correspondence with the characters $\alpha' \in\widehat{\mathbb{F}_q^\times}$ through the relation
\[
\alpha\left(\begin{bmatrix}
a&u^\top\\0&M
\end{bmatrix}\right)
=\alpha'(a).
\]
\end{proposition}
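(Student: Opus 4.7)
The first assertion is a direct block calculation: $A \in \operatorname{SL}(m, q)$ fixes the line $\mathbb{F}_q e_1$ exactly when its first column is a scalar multiple of $e_1$, so $A$ has the stated block upper-triangular form with $M \in \operatorname{GL}(m-1, q)$ (by invertibility), and $\det A = a \det M = 1$ forces $a = (\det M)^{-1}$.

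For the second assertion, the plan is to identify the abelianization of $G_0^*$ explicitly as $\mathbb{F}_q^\times$. Let $\chi \colon G_0^* \to \mathbb{F}_q^\times$ be the surjective homomorphism sending $A$ to its $(1,1)$-entry $(\det M)^{-1}$ (this is a homomorphism because $G_0^*$ is block upper triangular, so the $(1,1)$-entry of a product is the product of the $(1,1)$-entries). The claimed correspondence is then exactly $\alpha = \alpha' \circ \chi$, and since every linear character factors through the abelianization, it suffices to prove $\ker \chi \leq [G_0^*, G_0^*]$; the reverse inclusion is automatic from commutativity of $\mathbb{F}_q^\times$.

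To this end, decompose $G_0^* = U \rtimes L$, where $U$ is the unipotent subgroup (elements with $M = I$ and $a = 1$), identified with $\mathbb{F}_q^{m-1}$, and $L$ is the Levi subgroup (elements with $u = 0$), identified with $\operatorname{GL}(m-1, q)$. Then $\ker \chi = U \rtimes \operatorname{SL}(m-1, q)$, so the task splits into showing $\operatorname{SL}(m-1, q) \subseteq [G_0^*, G_0^*]$ and $U \subseteq [G_0^*, G_0^*]$. The first follows from the classical identity $[\operatorname{GL}(n, q), \operatorname{GL}(n, q)] = \operatorname{SL}(n, q)$ applied inside $L$; this identity fails only for $(n, q) = (2, 2)$, which corresponds precisely to the excluded case $(m, q) = (3, 2)$, and is trivial when $m = 2$. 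For the second, a direct conjugation computation shows $L$ acts on $U$ by $M \cdot v = (\det M)^{-1}(M^\top)^{-1} v$, so $[L, U]$ is an $L$-invariant additive subgroup of $\mathbb{F}_q^{m-1}$. For $m \geq 3$, the restriction of this action to $\operatorname{SL}(m-1, q) \leq L$ is transitive on $\mathbb{F}_q^{m-1} \setminus \{0\}$, so $[L, U]$ must be either trivial or all of $U$, and exhibiting any single nonzero commutator (easy, since $L$ contains non-scalar matrices) forces $[L, U] = U$. For $m = 2$, the action degenerates to $c \cdot v = c^{-2} v$ on $\mathbb{F}_q$, giving commutators of the form $(c^{-2} - 1) v$; these surject onto $U$ precisely when some $c \in \mathbb{F}_q^\times$ satisfies $c^2 \neq 1$, i.e., when $q \geq 4$, which rules out the remaining two exceptions $(2, 2)$ and $(2, 3)$.

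The main obstacle is organizing the commutator bookkeeping so that all three excluded pairs emerge naturally from a single argument. The transitivity/irreducibility argument dispatches $m \geq 3$ uniformly, but the degenerate case $m = 2$ must be handled by hand in order to pinpoint where $q \in \{2, 3\}$ breaks down. A secondary subtlety is that the identity $[\operatorname{GL}(n, q), \operatorname{GL}(n, q)] = \operatorname{SL}(n, q)$ must be invoked even in the corner $(n, q) = (2, 3)$, relevant to $(m, q) = (3, 3)$, where $\operatorname{SL}(2, 3)$ itself is not perfect; this can be verified directly by realizing a transvection as $[\operatorname{diag}(c, 1), I + s E_{12}]$ for $c \in \mathbb{F}_q^\times \setminus \{1\}$.
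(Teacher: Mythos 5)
Your proposal is correct, and it shares the paper's skeleton: both proofs identify the epimorphism $\chi$ onto $\mathbb{F}_q^\times$ given by the $(1,1)$-entry and reduce the character claim to showing $\ker\chi \leq [G_0^*,G_0^*]$, splitting the kernel into a Levi piece and a unipotent piece. The execution of both halves differs, though, and in one place your version is actually more robust. For the Levi piece, the paper takes commutators of two block-diagonal elements with $M,N \in \operatorname{SL}(m-1,q)$ and appeals to perfectness of $\operatorname{SL}(m-1,q)$; as you note, that group is not perfect when $(m,q)=(3,3)$, a case the proposition does not exclude, so the paper's cited justification has a small gap there. Your use of $[\operatorname{GL}(m-1,q),\operatorname{GL}(m-1,q)]=\operatorname{SL}(m-1,q)$, realized inside the full Levi subgroup and failing only at $(m,q)=(3,2)$, which is excluded, closes that gap cleanly (and your explicit transvection commutator $[\operatorname{diag}(c,1),\,I+sE_{12}]$ is exactly the right check for the $(2,3)$ corner). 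For the unipotent piece, the paper constructs an explicit $M\in\operatorname{GL}(m-1,q)$ such that $\det M$ is not an eigenvalue of $M^{-1}$ and then solves a linear system to hit an arbitrary $u$; your route instead observes that $[L,U]$ is an $L$-invariant additive subgroup of $\mathbb{F}_q^{m-1}$ and invokes transitivity of $\operatorname{SL}(m-1,q)$ on nonzero vectors for $m\geq 3$, reserving a hands-on computation for $m=2$, where the action degenerates to $v\mapsto c^{-2}v$ and the exceptions $q\in\{2,3\}$ fall out. The paper's construction is fully explicit and uniform in $m$ once the eigenvalue-avoiding matrix is exhibited; your argument trades that explicit matrix for a softer irreducibility statement and makes the three excluded pairs emerge more transparently.
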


\begin{proof}
It is easy to see that $G_1^*$ is the stabilizer of the line spanned by $[1,0,\ldots,0]^\top\in\mathbb{F}_q^m$.
For the other statement, consider the epimorphism $\varphi \colon G_1^* \to \mathbb{F}_q^\times$ given by $\varphi\left( \begin{bmatrix} a & u^\top \\ 0 & M \end{bmatrix} \right) = a = (\det M)^{-1}$,
with
\[ \ker \varphi =\left\{
\begin{bmatrix}
1&u^\top\\0&M
\end{bmatrix}
:u\in\mathbb{F}_q^{m-1},M\in\operatorname{SL}(m-1,q)
\right\}.
\]
Every $\alpha' \in \widehat{ \mathbb{F}_q^\times }$ produces a character $\alpha := \alpha' \circ \varphi$ of $G_1^*$, and the mapping $\alpha' \mapsto \alpha$ is injective since $\varphi$ is surjective.
To complete the proof, it suffices to show that the commutator subgroup $[G_1^*,G_1^*]$ contains (hence equals) $\ker \varphi$;
in that case the kernel of every character $\alpha \colon G_1^* \to \mathbb{T}$ contains $[G_1^*, G_1^*] \geq \ker \varphi$, and so $\alpha$ factors through $\varphi$.

We may assume that $m \geq 2$.
To begin, take any $M,N \in \operatorname{SL}(m-1,q)$ and observe that $\xi = \begin{bmatrix} 1 & 0^\top \\ 0 & M \end{bmatrix}$ and $\eta = \begin{bmatrix} 1 & 0^\top \\ 0 & N \end{bmatrix}$ have commutator
$\xi \eta \xi^{-1} \eta^{-1} = \begin{bmatrix} 1 & 0^\top \\ 0 & MNMN^{-1} \end{bmatrix}$.
Since $(m,q) \neq (3,2)$, the group $\operatorname{SL}(m-1,q)$ is perfect (cf.\ Ch.~XIII of~\cite{Lang:02}).
Consequently, $[G_1^*,G_1^*]$ contains every matrix $\begin{bmatrix} 1 & 0^\top \\ 0 & M \end{bmatrix}$ with $M \in \operatorname{SL}(m-1,q)$.

Since $\begin{bmatrix} 1 & 0^\top \\ 0 & M \end{bmatrix} \begin{bmatrix} 1 & u^\top \\ 0 & I \end{bmatrix} = \begin{bmatrix} 1 & u^\top \\ 0 & M \end{bmatrix}$, it remains only to show that $[G_1^*,G_1^*]$ contains every matrix $\begin{bmatrix} 1 & u^\top \\ 0 & I \end{bmatrix}$ with $u \in \mathbb{F}_q^{m-1}$.
To that end, we claim there exists $M \in \operatorname{GL}(m-1,q)$ such that $\det M = a^{-1}$ is not an eigenvalue of $M^{-1}$.
Assume this is true for the moment.
Then $M^{-1} - a^{-1} I$ is invertible.
Given $u \in \mathbb{F}_q^{m-1}$, we can find $v \in \mathbb{F}_q^{m-1}$ such that 
\[ u^\top = av^\top(M^{-1} - a^{-1} I) = av^\top M^{-1} - v^\top. \]
Then $\xi = \begin{bmatrix} a & 0^\top \\ 0  & M \end{bmatrix}$ and $\eta = \begin{bmatrix} 1 & v^\top \\ 0 & I \end{bmatrix}$ have commutator 
\[ \xi \eta \xi^{-1} \eta^{-1} = \begin{bmatrix} 1 & -v^\top + av^\top M^{-1} \\ 0 & I \end{bmatrix} = \begin{bmatrix} 1 & u^\top \\ 0 & I \end{bmatrix}, \]
as desired.

To prove the claim, we find an invertible matrix $N = M^{-1}$ not having $(\det N)^{-1} = \det M$ as an eigenvalue.
If $m-1 = 1$, then $q > 3$ by assumption, and so we can choose $N = b \in \mathbb{F}_q^\times \setminus \{ \pm 1\}$.
If $m-1 = 2$, we take $N = \begin{bmatrix} 0 & 1 \\ 1 & -1 \end{bmatrix}$.
Finally, if $m-1 \geq 3$, we take $N$ to have entries $N_{ij} = 1$ if $j \geq i$ or $(i,j) = (m-1,1)$, and $N_{ij} = 0$ otherwise.
In other words, $N$ has all 1's on and above the diagonal, and all 0's below the diagonal, except for a 1 in the bottom-left corner.
For example, $N = \begin{bmatrix} 1 & 1 & 1 \\ 0 & 1 & 1 \\ 1 & 0 & 1 \end{bmatrix}$ when $m-1=3$.
A cofactor expansion along the bottom row shows that $\det N = 1$ (when the first column and bottom row are deleted, the remaining matrix has a repeated column).
Meanwhile, the columns of $N-I$ are linearly independent, and so $1$ is not an eigenvalue of $N$.
\end{proof}

\begin{lemma}
For each $m > 2$ and prime power $q$, there does not exist a sequence $\mathscr{L}$ of $n = \frac{q^m-1}{q-1} \geq 2d > 2$ doubly transitive lines with span $\mathbb{C}^d$ and $\operatorname{Aut} \mathscr{L} \geq \operatorname{PSL}(m,q)$ as in Proposition~\ref{prop.classification}(ii).
\end{lemma}

\begin{proof}
We may assume that $(m,q)$ is not among the cases handled by Lemma~\ref{lem.PSL weirdos}.
Then the quotient map $\pi \colon \operatorname{SL}(m,q) \to \operatorname{PSL}(m,q)$ is a Schur covering, by Theorem~7.1.1 of~\cite{Karpilovsky:87}.
As in Proposition~\ref{prop.SL stabilizer}, the pre-image of a point stabilizer $G_1 \leq \operatorname{PSL}(m,q)$ is
\[ G_1^* := \pi^{-1}(G_1) =\left\{
\begin{bmatrix}
(\det M)^{-1}&u^\top\\0&M
\end{bmatrix}
: u\in\mathbb{F}_q^{m-1},M\in\operatorname{GL}(m-1,q)
\right\}.
\]
Choose any nontrivial linear character $\alpha \colon G_1^* \to \mathbb{T}$.
We will apply Theorem~\ref{thm.radicalization to Higman} to show that the radicalization of $(\operatorname{SL}(m,q),\, G_1^*,\, \alpha)$ is not a Higman pair.

By Proposition~\ref{prop.SL stabilizer} there is a nontrivial character $\alpha' \in \widehat{ \mathbb{F}_q^\times}$ such that $\alpha\left(\begin{bmatrix}
a&u^\top\\0&M
\end{bmatrix}\right)
=\alpha'(a)$ for every $\begin{bmatrix}
a&u^\top\\0&M
\end{bmatrix} \in G_1^*$.
Choose any $a \in \mathbb{F}_q^\times$ for which $\alpha'(a) \neq 1$, and any $N \in \operatorname{GL}(m-2,q)$ such that $\det N = a^{-1}$.
Put
\[ M := \begin{bmatrix} a & 0^\top \\ 0 & N \end{bmatrix} \in \operatorname{SL}(m-1,q), \qquad 
\xi := \begin{bmatrix} 1 & (1-a)\delta_1^\top \\ 0 & M \end{bmatrix} \in G_1^*, \qquad
x := \begin{bmatrix} 1 & 0^\top \\ \delta_1 & I \end{bmatrix} \in \operatorname{SL}(m,q) \setminus G_1^*,
\]
where $\delta_1 := [1,0,\dotsc,0]^\top \in \mathbb{F}_q^{m-1}$.
Then a straightforward calculation shows that
\[ x\xi x^{-1} 
=\begin{bmatrix} 1 & 0^\top \\ \delta_1 & I \end{bmatrix}
\begin{bmatrix} 1 & (1-a)\delta_1^\top \\ 0 & M \end{bmatrix}
\begin{bmatrix} 1 & 0^\top \\ -\delta_1 & I \end{bmatrix}
=\begin{bmatrix} a & (1-a)\delta_1^\top \\ 0 & (1-a)\delta_1^\top \delta_1 + M \end{bmatrix}
\in G_1^*,
\]
and yet $\alpha(x\xi x^{-1}) = \alpha'(a) \neq 1 = \alpha(\xi)$.
By Theorem~\ref{thm.radicalization to Higman}, the radicalization of $(\operatorname{SL}(m,q),\, G_1^*,\, \alpha)$ is not a Higman pair.
The desired result now follows from Theorem~\ref{thm.radicalization} and Lemma~\ref{lem.alpha=1 is dumb}.
\end{proof}

\begin{proof}[Proof of Theorem~\ref{thm.linear groups}]
We may assume that $m=2$ and $q \notin \{ 4,9 \}$, since all other cases were handled above.
Then the quotient map $\pi \colon \operatorname{SL}(2,q) \to \operatorname{PSL}(2,q)$ is a Schur covering, by Theorem~7.1.1 of~\cite{Karpilovsky:87}.
According to Proposition~\ref{prop.SL stabilizer}, the pre-image of a point stabilizer $G_1 \leq \operatorname{PSL}(2,q)$ is
\[ G_1^* := \pi^{-1}(G_1) = \left\{ \begin{bmatrix} a & b \\ 0 & a^{-1} \end{bmatrix} : a \in \mathbb{F}_q^\times,\, b \in \mathbb{F}_q \right\}, \]
and the linear characters $\alpha \colon G_1^* \to \mathbb{T}$ are in one-to-one correspondence with the characters $\alpha' \in \widehat{ \mathbb{F}_q^\times }$ through the relation $\alpha\left( \begin{bmatrix} a & b \\ 0 & a^{-1} \end{bmatrix} \right) = \alpha'(a)$.

Choose any $\alpha' \in \widehat{ \mathbb{F}_q^\times }$, and let $\alpha \colon G_1^* \to \mathbb{T}$ be the corresponding character of $G_1^*$.
To begin, we apply the Higman pair detector (Theorem~\ref{thm.radicalization to Higman}) to show that the radicalization of $( \operatorname{SL}(2,q),\, G_1^*,\, \alpha)$ is a Higman pair if and only if $\alpha'$ is real valued.
Take $x := \begin{bmatrix} 0 & 1 \\ -1 & 0 \end{bmatrix} \in \operatorname{SL}(2,q) \setminus G_1^*$.
To identify which choices of $\xi := \begin{bmatrix} a & b \\ 0 & a^{-1} \end{bmatrix} \in G_1^*$ satisfy the condition $x \xi x^{-1} \in G_1^*$, we compute $x \xi x^{-1} = \begin{bmatrix} a^{-1} & 0 \\ -b & a \end{bmatrix}$.
Thus, $x\xi x^{-1} \in G_1^*$ if and only if $b = 0$, and for such $\xi$ we have $\alpha(x\xi x^{-1}) = \alpha'(a^{-1}) = \overline{ \alpha'(a) } = \overline{\alpha(\xi)}$.
Comparing $\alpha(x \xi x^{-1})$ and $\alpha(\xi)$, we find that condition~(b) of Theorem~\ref{thm.radicalization to Higman} holds if and only if $\alpha$ (or equivalently, $\alpha'$) is real valued.
The claim now follows from Theorem~\ref{thm.radicalization to Higman}.

If $q$ is even, the only real-valued character of $\mathbb{F}_q^\times$ is trivial.
By Lemma~\ref{lem.alpha=1 is dumb} and Theorem~\ref{thm.radicalization}, we conclude that no sequence of $n = q+1 \geq 2d > 2$ lines spanning $\mathbb{C}^d$ has automorphism group containing $\operatorname{PSL}(2,q)$ when $q$ is even.

For the remainder of the proof, we assume that $q$ is odd.
Let $\alpha' \in \widehat{ \mathbb{F}_q^\times }$ be the unique nontrivial real-valued character.
Explicitly, $\alpha'(a) = 1$ if $a$ is a quadratic residue and $-1$ otherwise.
Let $\alpha \colon G_1^* \to \mathbb{T}$ be the linear character corresponding to $\alpha'$.
We begin by finding a key for the radicalization of $( \operatorname{SL}(2,q),\, G_1^*,\, \alpha)$ via Lemma~\ref{lem.key finder}.
Let $x := \begin{bmatrix} 0 & 1 \\ -1 & 0 \end{bmatrix}$, as above.
Then $\xi := I$ and $\eta := -I$ satisfy $\xi x \eta = -x = x^{-1}$ and
\[ \alpha(\xi \eta) = \alpha'(-1)=
\begin{cases}
1 & \text{if } q \equiv 1 \bmod 4, \\
-1 & \text{if } q \equiv 3 \bmod 4.
\end{cases}
\]
Put $z :=1$ when $q \equiv 1 \bmod 4$, and $z:=\mathrm{i}$ when $q \equiv 3 \bmod 4$.
Then $(x,z)$ is a key for the radicalization of $( \operatorname{SL}(2,q),\, G_1^*,\, \alpha)$, by Lemma~\ref{lem.key finder}.

When $q\equiv1\bmod4$, the resulting lines are real by Lemma~\ref{lem.radicalization's roux}.
By Proposition~\ref{prop:real2tran}, these lines are unique, and so they are constructed in Example~\ref{ex.Paley conference}.
For the case $q\equiv3\bmod4$, the resulting lines are not real by Proposition~\ref{prop:real2tran}.
We proceed by constructing the roux $B\in\mathbb{C}[C_4]^{n\times n}$ using Lemma~\ref{lem.radicalization's roux}.
Following the notation in Example~\ref{ex.Paley conference}, let $[\cdot,\cdot] : \mathbb{F}_q^2 \times \mathbb{F}_q^2 \to \mathbb{F}_q$ be the symplectic form given by $\Bigl[ [ a,b]^\top, [c,d]^\top \Bigr] = ad - bc$, and take $t_a:=[a,1]^\top$ for each $a\in\mathbb{F}_q$ and $t_\infty:=[1,0]^\top$.
We denote $X:=\mathbb{F}_q \cup \{ \infty \}$, so that $\{ t_i \}_{i \in X}$ is a full set of representatives for lines through the origin of $\mathbb{F}_q^2$.
Here, $G^*_1\leq G^*$ is the stabilizer of $\operatorname{span}\{t_\infty\}$.
For each $i\in X$, we select a representative $x_i$ of the coset of $G^*_1$ that sends $\operatorname{span}\{t_\infty\}$ to $\operatorname{span}\{t_i\}$.
To this end, it suffices to take $x_a:=\left[\begin{array}{cc}a&-1\\1&0\end{array}\right]$ for each $a\in\mathbb{F}_q$, and $x_\infty:=I$.
Then  $\{x_i\}_{i\in X}$ is a full set of left coset representatives for $G^*_1$ in $G^*$.
Choose any $i,j\in X$ with $i\neq j$.
We apply Lemma~\ref{lem.radicalization's roux} in cases to show $B_{ij} = z \alpha'([t_i,t_j])$.
In the case where $i=\infty$, then $\xi:=\left[\begin{array}{cc}-1&-j\\0&-1\end{array}\right]$ and $\eta:=I$ satisfy $\xi x \eta = x_j = x_i^{-1}x_j$.
Then $\alpha(\xi \eta) = -1$, and so Lemma~\ref{lem.radicalization's roux} gives $B_{\infty j}=\alpha(\xi\eta)z^{-1}=z = z \alpha'([t_\infty, t_j])$.
Next, if $j=\infty$, then $B_{i\infty}=B_{\infty i}^{-1}=z^{-1} = z \alpha'([t_i, t_\infty])$.
Otherwise, we have $i,j\in\mathbb{F}_q$, in which case $\xi:=\left[\begin{array}{cc}(j-i)^{-1}&-1\\0&j-i\end{array}\right]$ and $\eta:=\left[\begin{array}{cc}1&(i-j)^{-1}\\0&1\end{array}\right]$ satisfy $\xi x \eta = \left[\begin{array}{cc}1&0\\i-j&1\end{array}\right] = x_i^{-1}x_j$.
Then $\alpha(\xi\eta)=-\alpha'(i-j)$, and so Lemma~\ref{lem.radicalization's roux} gives $B_{ij}=\alpha(\xi\eta)z^{-1}=z\alpha'(i-j)=z\alpha'([t_i,t_j])$.
This proves the claim.

A signature matrix of our lines is given by $\hat\beta(B)$ for some character $\beta\colon C_4\to\mathbb{T}$.
Furthermore, $\beta$ is not real valued since the lines are not real.
When $\beta$ is the identity character, $\hat\beta(B)$ is exactly the signature matrix $\mathcal{S}$ given in Example~\ref{ex.Paley conference}.
Otherwise, $\beta$ is defined by $\beta(w)=\overline{w}$, in which case $\hat\beta(B)=\overline{\mathcal{S}}$.
In what follows, we show that $\overline{\mathcal{S}}$ is equivalent to $\mathcal{S}$ up to switching and permutation; then Proposition~\ref{prop:switching equiv is unitary equiv} implies the underlying line sets are equivalent, too.
Let $\sigma\colon X\to X$ be the permutation given by $\sigma(\infty)=\infty$ and $\sigma(a)=-a$ for $a\in\mathbb{F}_q$, define $\omega_\infty:=-1$ and $\omega_a:=1$ for $a\in\mathbb{F}_q$, and take $F:=\left[\begin{array}{cc}-1&0\\0&1\end{array}\right]$.
Then for any $i,j\in X$ with $i\neq j$, we have
\[
\omega_i\omega_j\mathcal{S}_{\sigma(i)\sigma(j)}
=\omega_i\omega_jz\alpha'([t_{\sigma(i)},t_{\sigma(j)}])
=z\alpha'([Ft_{i},Ft_{j}])
=z\operatorname{det}(F)\alpha'([t_{i},t_{j}])
=\overline{z\alpha'([t_{i},t_{j}])}
=\overline{\mathcal{S}_{ij}}.
\]
This completes the proof.
\end{proof}

\subsection{Unitary groups}
\label{subsec:unitary}

In this subsection, we fix a prime power $q>2$, which may be even or odd.
Throughout, matrices act on column vectors from the left.
Take $\operatorname{GU}(3,q) \leq \operatorname{GL}(3,q^2)$ to stabilize the Hermitian form $(u,v) = u_1 v_3^q + u_2 v_2^q + u_3 v_1^q$ on $\mathbb{F}_{q^2}^3$, let $\operatorname{SU}(3,q) \leq \operatorname{GU}(3,q)$ be the subgroup of matrices with determinant~$1$, and let $\operatorname{PSU}(3,q)$ be the quotient of $\operatorname{SU}(3,q)$ by its scalar subgroup.
We next consider the doubly transitive action of $\operatorname{PSU}(3,q)$ on isotropic lines in $\mathbb{F}_{q^2}^3$, that is, on the lines spanned by $u \in \mathbb{F}_{q^2}^3 \setminus \{0 \}$ with $(u,u) = 0$.
There are $n = q^3 + 1$ such lines.
Explicitly, take $X:=T\cup\{\infty\}$, where
\[
T:=\{[a,b]^\top\in\mathbb{F}_{q^2}^2:a^{q+1}+b+b^q=0\},
\]
and define the vectors $t_\infty:=[1,0,0]^\top$ and 
$t_{a,b}:=[b,a,1]^\top$ 
for $[a,b]^\top\in T$.
Then $\{t_i\}_{i\in X}$ gives a full set of representatives for the $q^3+1$ isotropic lines in $\mathbb{F}_{q^2}^3$.
The theorem below constructs doubly transitive lines in $\mathbb{C}^{q^2-q+1}$ that are indexed by $X$ and have $\operatorname{PSU}(3,q)$ in their automorphism group.
These lines can be described in terms of a roux over the unique subgroup $\mathbb{T}_q\leq\mathbb{F}_{q^2}^\times$ of order $q+1$.

\begin{theorem}
\label{thm.updated unitary groups}
With notation as above, define $B\in\mathbb{C}[\mathbb{T}_q]^{X\times X}$ by
\[
B_{ij}=
\begin{cases}
-(t_i,t_j)^{q-1} & \text{if } i\neq j,\\
0 & \text{if }i=j.
\end{cases}
\]
Then the following hold:
\begin{itemize}
\item[(a)]
$B$ is a roux with parameters $c_1=q-1$ and $c_a=q^2-1$ for  $a\in\mathbb{T}_q\setminus\{1\}$.
\item[(b)]
For each nontrivial character $\beta\colon\mathbb{T}_q\to\mathbb{T}$, it holds that $-\hat\beta(B)\in\mathbb{C}^{X\times X}$ is the signature matrix of a $d\times n$ ETF, where $d=q^2-q+1$ and $n=q^3+1$.
Furthermore, the lines $\mathscr{L}_\beta$ spanned by these ETF vectors have $\operatorname{Aut}\mathscr{L}_\beta\geq\operatorname{PSU}(3,q)$, and exactly one of the following holds:
\begin{itemize}
\item[(i)]
The lines are real and $\beta$ is real valued.
\item[(ii)]
The lines are not real and $\beta$ takes non-real values.
\end{itemize}
\item[(c)]
For any dimension $d'$, any sequence $\mathscr{L}$ of $n\geq 2d'>2$ lines in $\mathbb{C}^{d'}$ for which $\operatorname{Aut}\mathscr{L}\geq\operatorname{PSU}(3,q)$ 
arises as in (b).
In particular, $d' = d$.
\end{itemize}
\end{theorem}

We prove Theorem~\ref{thm.updated unitary groups} at the end of this subsection.
Different choices of the character $\beta$ in Theorem~\ref{thm.updated unitary groups}(b) may or may not give equivalent lines, as detailed in Theorem~\ref{thm:unitary equivalences} below.

\begin{remark}
As explained in~\cite{IversonM:18}, every roux $B$ yields a \textbf{roux graph} with adjacency matrix $\lceil B \rfloor$.
In Theorem~\ref{thm.updated unitary groups}, the roux graph of $B$ is a (cyclic) distance regular antipodal $(q+1)$-fold cover of $K_{q^3+1}$ with the property that vertices at distance two have exactly $q^2-1$ neighbors in common.
This follows from Theorem~4.2 in~\cite{IversonM:18}; in the terminology of that paper, the graph is a $(q^3+1,q+1,q^2-1)$-\textsc{drackn}.
Furthermore, one can show that this graph admits an arc-transitive group of automorphisms isomorphic to $\operatorname{SU}(3,q)$.
By a recent theorem of Tsiovkina~\cite[Theorem~5.11]{Tsiovkina:22}, there is only one such graph up to isomorphism.
It was constructed by Godsil~\cite{Godsil:92} and rediscovered by Fickus, et.~al, in the context of equiangular lines~\cite{FickusJMPW:17}.
When $q$ is odd, the construction of this graph in terms of Theorem~\ref{thm.updated unitary groups} is new, as far as the authors know.
When $q$ is even, Cameron gave an equivalent construction in Proposition~5.1 of~\cite{Cameron:91}.
For any choice of $m \in [q]$, the roux graph of $B^{\circ m}$ is also distance regular, and when $q$ or $m$ is even, our construction also amounts to that of Cameron~\cite{Cameron:91}.
When $q$ and $m$ are both odd, the difference between our construction and Cameron's amounts to the leading minus sign in the expression $-(t_i,t_j)^{q-1}$.
\end{remark}

The following is surely known.
We include a proof for the sake of completeness.

\begin{proposition}
The quotient mapping $\pi \colon \operatorname{SU}(3,q) \to \operatorname{PSU}(3,q)$ is a Schur covering.
\end{proposition}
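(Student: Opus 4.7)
My plan is to verify, for the canonical surjection $\pi\colon \operatorname{SU}(3,q) \twoheadrightarrow \operatorname{PSU}(3,q)$, each of the three defining axioms of a Schur covering. I would begin by identifying the kernel: since $\operatorname{PSU}(3,q) = \operatorname{SU}(3,q)/Z(\operatorname{SU}(3,q))$, one has
\[ A := \ker \pi = Z(\operatorname{SU}(3,q)) = \{ \lambda I : \lambda \in \mathbb{F}_{q^2},\ \lambda^3 = 1,\ \lambda^{q+1} = 1 \}, \]
which is cyclic of order $d := \gcd(3, q+1)$. Axiom (i), that $A$ is central, is immediate from this description.

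For axiom (ii), $A \leq [\operatorname{SU}(3,q), \operatorname{SU}(3,q)]$, I would reduce the claim to perfectness of $\operatorname{SU}(3,q)$. Since Proposition~\ref{prop.classification}(iii) only concerns $q>2$, one may invoke the standard fact that $\operatorname{SU}(n,q)$ is perfect outside a short list of small exceptions (Ch.~XIII of~\cite{Lang:02}); alternatively, one exhibits the $d$ generators of $A$ as explicit commutators of transvections in the root subgroups of $\operatorname{SU}(3,q)$, using the Steinberg relations.

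The substantive step is axiom (iii), the isomorphism $A \cong M(\operatorname{PSU}(3,q))$. Once (i) and (ii) are in place, $\pi$ realizes $\operatorname{SU}(3,q)$ as a perfect central extension of $\operatorname{PSU}(3,q)$, so the universal property of the Schur cover among perfect central extensions furnishes a surjection $M(\operatorname{PSU}(3,q)) \twoheadrightarrow A$; equality is then equivalent to the vanishing of $M(\operatorname{SU}(3,q))$. To finish, I would appeal to the Steinberg--Griess computation of Schur multipliers of finite simple groups of Lie type, tabulated in Chapter~7 of~\cite{Karpilovsky:87}: it records $|M(\operatorname{PSU}(3,q))| = \gcd(3, q+1)$ for all $q>2$, with no exceptional multiplier occurring in the $\operatorname{PSU}(3,\cdot)$ family (the exceptional cases in the $\operatorname{PSU}$ series occur at $\operatorname{PSU}(4,2)$, $\operatorname{PSU}(4,3)$, and $\operatorname{PSU}(6,2)$). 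The main obstacle is precisely this last citation: without the multiplier tables one would need to derive $M(\operatorname{SU}(3,q))=1$ from a concrete Steinberg-style presentation of $\operatorname{SU}(3,q)$, which is possible but nontrivial. Combining the three axioms completes the verification.
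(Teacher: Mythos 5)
Your argument is correct for $q>2$ and takes essentially the same route as the paper: identify $\ker\pi$ with $Z(\operatorname{SU}(3,q))$, reduce axiom (ii) to perfectness of $\operatorname{SU}(3,q)$, and cite the literature for the Schur multiplier. The only substantive difference is how the multiplier is invoked. The paper cites Theorem~3 of Griess, which asserts $M(\operatorname{PSU}(3,q)) \cong Z(\operatorname{SU}(3,q))$ in one stroke; you instead route through the universal central extension of a perfect group to get a surjection $M(\operatorname{PSU}(3,q)) \twoheadrightarrow \ker\pi$ and then match orders against the tabulated value $\gcd(3,q+1)$. Both are legitimate, and yours makes the logical structure more explicit at the cost of an extra appeal to the universal-extension machinery (which itself requires $\operatorname{PSU}(3,q)$ perfect, again forcing $q>2$). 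Where you fall short of the statement as written is $q=2$: the proposition carries no restriction on $q$, and the paper covers $q=2$ by exhibiting the generator $aI$ of the center as an explicit commutator of two concrete matrices. That separate treatment is genuinely necessary there, since $\operatorname{SU}(3,2)$ is not perfect and $\operatorname{PSU}(3,2)$ is not simple, so neither the perfectness argument nor the simple-group multiplier tables apply. Your decision to restrict to $q>2$ is defensible for the application (Proposition~\ref{prop.classification}(iii) assumes $q>2$), but it leaves the proposition as stated unproved in the excluded case.
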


\begin{proof}
The Schur multiplier of $\operatorname{PSU}(3,q) = \operatorname{SU}(3,q)/Z(\operatorname{SU}(3,q))$ is isomorphic to $Z(\operatorname{SU}(3,q))$, by Theorem~3 in~\cite{Griess:73}.
Since $q > 2$, $\operatorname{SU}(3,q)$ is perfect (cf.\ Theorem~11.22 in~\cite{Grove:02}), and in particular $Z(\operatorname{SU}(3,q)) \leq [\operatorname{SU}(3,q),\operatorname{SU}(3,q)]$.
\end{proof}

Our proof of Theorem~\ref{thm.updated unitary groups} requires detailed knowledge about $G_1^* := \pi^{-1}(G_1)$, where $G_1 \leq \operatorname{PSU}(3,q)$ is the stabilizer of $\operatorname{span}\{ t_\infty \}$.
In particular, we must understand all of its linear characters, as well as their interactions with the double coset structure of $G_1^*$ in $G^*:= \operatorname{SU}(3,q)$.
This information is provided below.

\begin{proposition}
\label{prop:PSU stabilizer}
Consider the action of $\operatorname{SU}(3,q)$ on isotropic lines in $\mathbb{F}_{q^2}^3$.
The stabilizer of the line spanned by $t_\infty \in\mathbb{F}_{q^2}^3$ is $G_1^* = K \ltimes N$, where
\[ 
K := \{ \eta_e : e \in \mathbb{F}_{q^2}^\times \},
\qquad
N := \{ \xi_{a,b} : a,b \in \mathbb{F}_{q^2},\, a^{q+1} + b + b^q = 0\},
\]
with $\eta_{e}$ and $\xi_{a,b}$ given by
\[
\eta_e:=\begin{bmatrix}e & 0 & 0 \\ 0 & e^{q-1} & 0 \\ 0 & 0 & e^{-q} \end{bmatrix},
\qquad
\xi_{a,b}:=\begin{bmatrix}1 & a & b \\ 0 & 1 & -a^q \\ 0 & 0 & 1 \end{bmatrix}
\qquad
(e \in \mathbb{F}_{q^2}^\times, a^{q+1}+b+b^q = 0).
\]
Furthermore, the linear characters $\alpha\colon G_1^* \to \mathbb{T}$ are in one-to-one correspondence with the characters $\alpha'\in\widehat{\mathbb{F}_{q^2}^\times}$ through the relation $\alpha(\eta_e \xi_{a,b})=\alpha'(e)$.
\end{proposition}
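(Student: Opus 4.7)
The proposition has two parts: the semidirect-product description of $G_0^*$ and, for $q > 2$, the correspondence between its linear characters and those of $\mathbb{F}_{q^2}^\times$. I would prove these in sequence.

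For the first part, the plan is to start with $M \in \operatorname{SU}(3,q)$ stabilizing the line spanned by $e_1 = [1,0,0]^\top$, so that $M e_1 = \lambda e_1$ for some $\lambda \in \mathbb{F}_{q^2}^\times$, and then let the Hermitian constraints pin down the remaining entries. Applying $(M e_1, Mv) = (e_1, v) = v_3^q$ for every $v$ forces the bottom row of $M$ to be $(0, 0, \lambda^{-q})$, and $\det M = 1$ then pins the $(2,2)$-entry as $\lambda^{q-1}$. After reparameterizing $e = \lambda$, $a = M_{12}/e$, $b = M_{13}/e$, the conditions $(Me_2, Me_3) = 0$ and isotropy of $Me_3$ reduce to the single constraint $a^{q+1} + b + b^q = 0$, and the unique factorization $M = \eta_e \xi_{a,b}$ follows. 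Direct matrix computation verifies $\eta_e, \xi_{a,b} \in \operatorname{SU}(3,q)$, while the conjugation identity $\eta_e \xi_{a,b} \eta_e^{-1} = \xi_{e^{2-q}a,\, e^{q+1} b}$ shows $N \trianglelefteq G_0^*$ and completes the semidirect structure $G_0^* = K \ltimes N$.

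For the character correspondence, I would show $[G_0^*, G_0^*] = N$; the inclusion $\subseteq$ is automatic since $G_0^*/N \cong K$ is abelian, and the reverse inclusion implies that every linear character of $G_0^*$ is trivial on $N$, hence factors through $K \cong \mathbb{F}_{q^2}^\times$ as claimed. To exhibit enough commutators to generate $N$, I would proceed in two steps. First, a direct computation gives $[\xi_{a_0, b_0}, \xi_{a_1, b_1}] = \xi_{0,\, a_1 a_0^q - a_0 a_1^q}$, and since $t \mapsto t - t^q$ is $\mathbb{F}_q$-linear with kernel $\mathbb{F}_q$, its image is the trace-zero hyperplane, which exactly parameterizes $Z(N) = \{\xi_{0, b} \in N\}$. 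Thus $[N, N] = Z(N)$. Second, the mixed commutator $[\eta_e, \xi_{a,b}]$ has first coordinate $(e^{2-q}-1)a$, hence maps to $(e^{2-q}-1)a$ under the projection $N/[N,N] \cong \mathbb{F}_{q^2}$, $\xi_{a,b} \mapsto a$.

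The main obstacle is this last step, where I need $(e^{2-q}-1)a$ to range over all of $\mathbb{F}_{q^2}$ as $e$ and $a$ vary: it suffices to produce one $e \in \mathbb{F}_{q^2}^\times$ with $e^{2-q} \neq 1$, after which scaling $a$ yields any target value. The equation $e^{2-q} = 1$ has $\gcd(2-q,\, q^2-1)$ solutions in $\mathbb{F}_{q^2}^\times$, and reducing $q \equiv 2 \bmod (2-q)$ gives $q^2 - 1 \equiv 3 \bmod (2-q)$, so $\gcd(2-q, q^2-1) = \gcd(2-q, 3) \leq 3$. Since $q > 2$ forces $q^2 - 1 > 3$, an admissible $e$ always exists. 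The hypothesis $q > 2$ is sharp: when $q = 2$, $e^{2-q} = e^0 = 1$ identically, the mixed commutators land inside $[N, N]$, and strictly more characters survive on $G_0^*$ than those descending from $K$, which is precisely why the case $q = 2$ is excluded.
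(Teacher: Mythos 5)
Your proposal is correct, and the character-correspondence half follows essentially the same strategy as the paper: reduce to showing $[G_0^*,G_0^*]\supseteq N$, obtain the central elements $\xi_{0,b}$ from commutators inside $N$, and then reach a general $\xi_{a,b}$ using conjugation by a suitable $\eta_e$. The differences are in packaging and in the first half. For the stabilizer decomposition, the paper simply cites O'Nan for $G_0^* = K\ltimes N$, whereas you derive it directly from the Hermitian form (fixing the bottom row via $(Me_1,Mv)=(e_1,v)$, pinning $M_{22}$ with the determinant, and reducing the remaining unitarity conditions to $a^{q+1}+b+b^q=0$); this makes the argument self-contained at the cost of a routine matrix computation. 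For the commutator step, you prove the clean identity $[\xi_{a_0,b_0},\xi_{a_1,b_1}]=\xi_{0,\,a_1a_0^q-a_0a_1^q}$ and then argue at the level of the quotient $N/[N,N]\cong\mathbb{F}_{q^2}$, $\xi_{a,b}\mapsto a$, so that it suffices to make the mixed commutators $[\eta_e,\xi_{a,b}]$, with first coordinate $(e^{2-q}-1)a$, surject onto that quotient; the paper instead writes down explicit commutators hitting each $\xi_{0,b}$ and each $\xi_{a,b}$ individually (correcting by a central element at the end). Your existence argument for an $e$ with $e^{2-q}\neq 1$ via $\gcd(q-2,q^2-1)=\gcd(q-2,3)\le 3$ is a nice quantitative refinement of the paper's choice of a generator $\lambda$ of $\mathbb{F}_{q^2}^\times$, and it makes transparent exactly where the hypothesis $q>2$ enters. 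Both routes are sound; yours is marginally more structural, the paper's marginally more explicit.
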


We were not able to locate a reference for Proposition~\ref{prop:PSU stabilizer}, so we supply our own proof.

\begin{proof}
Notice that $Z(\operatorname{SU}(3,q)) = \{ \eta_e : e \in \mathbb{F}_{q^2},\, e^{q+1} = e^3 = 1 \}$.
O'Nan~\cite{ONan:72} establishes that $G_1^* \leq \operatorname{SU}(3,q)$ is the stabilizer of the line spanned by $t_\infty$,
and that $G_1^* = K \ltimes N$.
The latter are subgroups, and the group operations for $K$, $N$, and $G_1^*$ follow the basic identities 
\begin{equation}
\label{eq:PSUrels}
\eta_e \eta_h = \eta_{eh},
\qquad
\xi_{a,b} \xi_{f,g} = \xi_{a+f, b+g-af^q},
\qquad
\xi_{a,b}^{-1} = \xi_{-a,b^q},
\qquad
\eta_e \xi_{a,b} \eta_e^{-1} = \xi_{e^{2-q}a,e^{q+1}b}.
\end{equation}
Identifying $G_1^*/N \cong K$ with $\mathbb{F}_{q^2}^\times$, we obtain the epimorphism $\varphi \colon G_1^* \to \mathbb{F}_{q^2}^\times$, $\varphi(\eta_{e}\xi_{a,b}) = e$.
Consequently, every $\alpha' \in \widehat{ \mathbb{F}_{q^2}^\times }$ defines a character $\alpha := \alpha' \circ \varphi$ of $G_1^*$, and the mapping $\alpha' \mapsto \alpha$ is injective.
It remains to show that every linear character of $G_1^*$ factors through $\varphi$, or equivalently, that $[G_1^*,G_1^*]$ contains $\ker \varphi = N$.

To begin, we consider the $\mathbb{F}_q$-linear subspace $V \leq \mathbb{F}_{q^2}$ of all $b$ for which $b + b^q = 0$.
The $\mathbb{F}_q$-linear transformation $\varphi\colon \mathbb{F}_{q^2} \to \mathbb{F}_{q^2}$ given by $\varphi(a) = a - a^q$ maps into $V$, with one-dimensional kernel $\mathbb{F}_q \leq \mathbb{F}_{q^2}$.
Consequently, $\varphi$ maps $\mathbb{F}_{q^2}$ onto $V$.
Similarly, the mapping $a \mapsto a + a^q$ is a surjection of $\mathbb{F}_{q^2}$ onto $\mathbb{F}_q$.

Given $b \in V$, choose $a \in \mathbb{F}_{q^2}$ such that $a-a^q = b$.
Then find $f \in \mathbb{F}_{q^2}$ such that $f + f^q = -a^{q+1}$, and $g \in \mathbb{F}_{q^2}$ satisfying $ g+g^q = -1$.
A simple computation shows that $[\xi_{a,f}, \xi_{-1,g}] = \xi_{0,a-a^q} = \xi_{0,b}$.
Therefore, $[G_1^*,G_1^*]$ contains the group $\{ \xi_{0,b} : b \in V\}$.

Finally, let $\xi_{a,b} \in N$ be arbitrary.
Notice that $\lambda^{2-q} - 1 \neq 0$ since $q^2 - 1 > 2q - 1$ by the assumption $q > 2$.
Put $f := a(\lambda^{2-q} -1)^{-1}$, and find $g \in \mathbb{F}_{q^2}$ such that $g+g^q = -f^{q+1}$.
Then one computes $[\eta_\lambda, \xi_{f,g}] = \xi_{f(\lambda^{2-q}-1),h} = \xi_{a,h}$ for some appropriate choice of $h \in \mathbb{F}_{q^2}$, which necessarily satisfies 
\[ a^{q+1} + h + h^q = 0 = a^{q+1} + b + b^q. \]
Hence $b-h \in V$, and $[G_1^*,G_1^*]$ contains $\xi_{a,h} \xi_{0,b-h} = \xi_{a,b}$.
\end{proof}

\begin{lemma}
\label{lem:PSUdcsts}
Adopt the notation of Proposition~\ref{prop:PSU stabilizer}, and put $x := \begin{bmatrix}
0 & 0 & 1 \\
0 & -1 & 0 \\
1 & 0 & 0
\end{bmatrix}$.
Let $\zeta \in G_1^*$ be arbitrary.
If $\zeta \in K$, then $x\zeta x^{-1} \in K \leq G_1^*$.
If $\zeta \notin K$, then $x \zeta x^{-1} \in G_1^* x G_1^*$.

Furthermore, suppose $\alpha' \in \widehat{ \mathbb{F}_{q^2}^\times}$ satisfies $\operatorname{im} \alpha' \leq C_{q+1}$, and let $\alpha$ be the corresponding linear character of $G_1^*$.
If $\zeta = \eta_e \xi_{a,b} \notin K$, then every decomposition $x \zeta x^{-1} = \xi x \eta$ with $\xi,\eta \in G_1^*$ satisfies $\alpha(\xi \eta \zeta^{-1}) = \alpha'(b)$.
\end{lemma}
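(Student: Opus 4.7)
The plan is to handle the three claims in sequence. Some preliminary observations: $x = x^{-1}$ (a short direct check), $x$ preserves the Hermitian form (the sign on the middle basis vector cancels with itself under the $q$-th power, so $x \in \operatorname{SU}(3,q)$), and clearly $x \notin G_0^*$. For the first assertion, when $\zeta = \eta_e \in K$ a direct conjugation gives $x\eta_e x^{-1} = \eta_{e^{-q}} \in K$, using the identity $(e^{-q})^{q-1} = e^{-q(q-1)} = e^{q-1}$ in $\mathbb{F}_{q^2}^\times$. For the second assertion, $\zeta \notin K$ means $\xi_{a,b} \neq I$, and I would invoke Proposition~\ref{prop: char of 2tran}: because $G$ acts doubly transitively on the isotropic lines and $x \notin G_0^*$, the only two double cosets are $G_0^*$ and $G_0^* x G_0^*$, so it suffices to rule out $x\zeta x^{-1} \in G_0^*$. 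Conjugation by $x$ sends the stabilizer of $\operatorname{span}\{[1,0,0]^\top\}$ to the stabilizer of $\operatorname{span}\{[0,0,1]^\top\}$, hence $x\zeta x^{-1} \in G_0^*$ iff $\zeta$ also fixes $\operatorname{span}\{[0,0,1]^\top\}$, and one checks directly that $\eta_e \xi_{a,b}$ fixes this second line iff $a=b=0$.

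For the character identity, the crucial starting point (from Proposition~\ref{prop:PSU stabilizer}) is that $\alpha$ factors through the $(1,1)$-entry map $G_0^* \to \mathbb{F}_{q^2}^\times$. Writing $\xi = \eta_{e_1}\xi_{a_1,b_1}$ and $\eta = \eta_{e_2}\xi_{a_2,b_2}$, I have $\alpha(\xi \eta \zeta^{-1}) = \alpha'(e_1 e_2 e^{-1})$, which reduces the problem to determining $e_1 e_2$. The plan is to compute the $(1,1)$ and $(3,1)$ entries of both sides of $x\zeta x^{-1} = \xi x \eta$. A direct calculation gives
\[
x\zeta x^{-1} = \begin{bmatrix} e^{-q} & 0 & 0 \\ e^{q-1}a^q & e^{q-1} & 0 \\ eb & -ea & e \end{bmatrix},
\]
while multiplying out the upper triangular $\xi$, the signed permutation $x$, and the upper triangular $\eta$ yields $(\xi x \eta)_{1,1} = e_1 e_2 b_1$ and $(\xi x \eta)_{3,1} = e_1^{-q} e_2$. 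Matching entries forces $e_2 = eb\, e_1^{q}$, hence $e_1 e_2 e^{-1} = b\, e_1^{q+1}$.

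The last step is the subtlety that justifies the hypothesis $\operatorname{im}\alpha' \leq C_{q+1}$: since $\alpha'$ is a homomorphism whose image lies in $C_{q+1}$, we have $\alpha'(e_1)^{q+1} = 1$, so $\alpha'(e_1 e_2 e^{-1}) = \alpha'(b)\,\alpha'(e_1)^{q+1} = \alpha'(b)$. This simultaneously proves decomposition-independence (the free parameters $e_1, a_1, b_1, a_2, b_2$ do not appear in the final value) and yields the specified identity. The main obstacle I anticipate is simply choosing the correct pair of matrix entries to compare in the middle step; the $(1,1)$ and $(3,1)$ slots work cleanly because the off-diagonal data of $\xi$ and $\eta$ drop out, while the residual $e_1^{q+1}$ factor in $e_1 e_2 e^{-1}$ is precisely what the hypothesis $\operatorname{im}\alpha' \leq C_{q+1}$ is designed to annihilate.
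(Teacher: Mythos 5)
Your proposal is correct and follows essentially the same route as the paper: compute $x\zeta x^{-1}$ explicitly, use the two-double-coset structure from double transitivity for the coset claim, extract the scalar relation $e_2 = e b\, e_1^{q}$ by comparing a single matrix entry of $x\zeta x^{-1} = \xi x \eta$, and annihilate the residual $e_1^{q+1}$ with the hypothesis $\operatorname{im}\alpha' \leq C_{q+1}$. The only cosmetic difference is that you compare the $(3,1)$ entries directly, whereas the paper first rearranges the equation and compares top-right entries; the resulting relation is identical.
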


\begin{proof}
First, we compute
\begin{equation}
\label{eq:PSUrelx}
x \eta_e x^{-1}
=
\eta_{e^{-q}},
\qquad
x \xi_{a,b} x^{-1} = \begin{bmatrix} 1 & 0 & 0 \\ a^q & 1 & 0 \\ b & -a & 1 \end{bmatrix}
\qquad
(e \neq 0, a^{q+1} + b + b^q = 0).
\end{equation}
Therefore $xKx^{-1} = K$, and $x(\eta_e \xi_{a,b})x^{-1} = (x\eta_ex^{-1}) (x\xi_{a,b}x^{-1}) \in G_1^*$ if and only if $a=b=0$.
Since $G^* := \operatorname{SU}(3,q)$ acts doubly transitively on $G^*/G_1^*$, there are just two double cosets of $G_1^*$ in $G^*$.
When $\zeta \notin K$ we have $x \zeta x^{-1} \notin G_1^*$, and so $x \zeta x^{-1} \in G_1^* x G_1^*$.

Now let $\alpha' \in \mathbb{F}_{q^2}^\times$ be such that $\operatorname{im} \alpha' \leq C_{q+1}$.
Suppose $\zeta = \eta_e \xi_{a,b} \notin K$, and $x\zeta x^{-1} = (\eta_{h} \xi_{f,g} ) x (\eta_{k} \xi_{l,m})$ for some $\eta_h \xi_{f,g}$, $\eta_k \xi_{l,m} \in G_1^*$.
Using~\eqref{eq:PSUrels} and~\eqref{eq:PSUrelx}, we rewrite
\[
\eta_{h^{-q} k^{-1}} \zeta
=
x^{-1} \xi_{k^{2q-1} f, k^{q+1} g} x \xi_{l,m} x.
\]
The left side of this equation is
\[ \eta_{h^q k^{-1}} \zeta 
=
\eta_{h^q k^{-1} e} \xi_{a,b}
=
\begin{bmatrix}
h^q k^{-1} e & h^q k^{-1} a & h^q k^{-1} e b \\
0 & h^{1-q} k^{1-q} e^{q-1} & -h^{1-q} k^{1-q} e^{q-1} a^q \\
0 & 0 & h^{-1} k^q e^{-q}
\end{bmatrix},
\]
while the right side takes the form
\[
\begin{bmatrix} 0 & 0 & 1 \\ 0 & -1 & 0 \\ 1 & 0 & 0 \end{bmatrix}
\begin{bmatrix} 1 & \ast & \ast \\ 0 & 1 & \ast \\ 0 & 0 & \ast \end{bmatrix}
\begin{bmatrix} 0 & 0 & 1 \\ 0 & -1 & 0 \\ 1 & 0 & 0 \end{bmatrix}
\begin{bmatrix} 1 & \ast & \ast \\ 0 & 1 & \ast \\ 0 & 0 & \ast \end{bmatrix}
\begin{bmatrix} 0 & 0 & 1 \\ 0 & -1 & 0 \\ 1 & 0 & 0 \end{bmatrix}
=
\begin{bmatrix} \ast & \ast & 1 \\ \ast & \ast & \ast \\ \ast & \ast & \ast \end{bmatrix},
\]
where $\ast$ marks (possibly different) entries we have not computed.
Comparing the top-right entries, we see that
$b = h^{-q} k e^{-1}$.
We have $\alpha'(h^{-q}) = \alpha'(h)$ since $\operatorname{im} \alpha' \leq C_{q+1}$, and therefore
\[ \alpha( \eta_h \xi_{f,g} \eta_k \xi_{l,m} \zeta^{-1} ) = \alpha'(hke^{-1}) = \alpha'(h^{-q} k e^{-1}) = \alpha'(b). \qedhere \]
\end{proof}

\begin{proof}[Proof of Theorem~\ref{thm.updated unitary groups}]
To begin, we find all characters $\alpha \colon G_1^* \to \mathbb{T}$ for which the radicalization of $(G^*, G_1^*, \alpha)$ is a Higman pair.
Fix a linear character $\alpha' \in \widehat{ \mathbb{F}_{q^2}^\times }$, and let $\alpha$ be the corresponding character of $G_1^*$.
Put $x := \begin{bmatrix} 0 & 0 & 1 \\ 0 & -1 & 0 \\ 1 & 0 & 0 \end{bmatrix}$.
Given $\xi \in G_1^*$, we have $x \xi x^{-1} \in G_1^*$ if and only if $\xi = \eta_e$ for $e \neq 0$, in which case $x \eta_e x^{-1} = \eta_{e^{-q}}$.
By Theorem~\ref{thm.radicalization to Higman}, the radicalization $(\widetilde{G}^*,H)$ of $(G^*, G_1^*, \alpha)$ is a Higman pair if and only if $\alpha'(e) = \alpha'(e)^{-q}$ for every $e \neq 0$, if and only if $\operatorname{im} \alpha' \leq C_{q+1}$.

Now fix a choice of $\alpha$ for which $\operatorname{im}\alpha'\leq C_{q+1}$, and put $r':=|\operatorname{im}\alpha'|$, $r:=2r'$.
Then the radicalization of $(G^*,G_1^*,\alpha)$ is a Higman pair.
To find a key for this Higman pair, first observe that $\mathbb{F}_q^\times \leq \operatorname{ker}\alpha'$.
Consequently, there is a unique character $\alpha''\colon\mathbb{T}_q\to C_{r'}$ for which $\alpha'(a)=\alpha''(a^{q-1})$ for each $a\in\mathbb{F}_{q^2}^\times$.
Put $z:=\alpha''(-1)\in\{\pm1\}$.
Then since $x=x^{-1}$, Lemma~\ref{lem.key finder} implies that $(x,z)$ is a key for the radicalization of $(G^*,G_1^*,\alpha)$.
With this key, Proposition~\ref{prop.higman's roux} produces a roux $\tilde{B}\in\mathbb{C}[C_r]^{X\times X}$.

We now construct the entries of $\tilde{B}$ using Lemma~\ref{lem.radicalization's roux}.
To begin, we fix a choice of left coset representatives for $G_1^*$ in $G^*$.
Since $G^*$ acts doubly transitively on isotropic lines in $\mathbb{F}_{q^2}^3$, where $G_1^*$ stabilizes $\operatorname{span}\{t_\infty\}$, it suffices to find $x_i\in G^*$ for which $x_it_\infty=t_i$ for each $i\in X$.
We take $x_\infty:=\left[\begin{array}{ccc}1&0&0\\0&1&0\\0&0&1\end{array}\right]$, and for $[a,b]^\top\in T$ we define 
$x_{a,b}:=\left[\begin{array}{ccc}b&-a^q&1\\a&1&0\\1&0&0\end{array}\right]$, 
where 
$x_{a,b}^{-1}=\left[\begin{array}{ccc}0&0&1\\0&1&-a\\1&a^q&b^q\end{array}\right]$.
Then $\{x_i\}_{i\in X}$ is a full set of left coset representatives for $G_1^*$ in $G^*$.
For any choice of $i,j\in X$ with $i\neq j$, a straightforward calculation gives that the $(3,1)$ entry of the matrix $x_i^{-1}x_j$ equals $(t_j,t_i)=(t_i,t_j)^q$.
For any $\xi,\eta\in G_1^*$ such that $\xi x \eta = x_i^{-1}x_j$, one may examine the (3,1) entry to conclude that $\alpha(\xi \eta) = \alpha( (t_i,t_j)^q) = \overline{ \alpha( (t_i,t_j) ) }$.
By Lemma~\ref{lem.radicalization's roux}, it follows that $\tilde{B}_{ij} = z \overline{ \alpha( (t_i,t_j) ) } = \overline{ \alpha''\bigl(- (t_i,t_j)^{q-1} \bigr)}$.
At this point, we observe that the entries of $\tilde{B}$ reside in the subring $\mathbb{C}[C_{r'}] \leq \mathbb{C}[C_{r}]$, and so we may view $\tilde{B}$ as a roux for either $C_{r'}$ or $C_{r}$.
Furthermore, the lines given by $\tilde{B}$ do not change when we view it as a roux over $C_{r'}$ instead of $C_r$, in light of Proposition~\ref{prop.two reps of same lines} and the fact that every character of $C_{r'}$ extends to a character of $C_r$.

To prove (a) and (b), we focus on a special choice of $\alpha$.
Namely, we fix a character $\alpha_0 \colon \mathbb{F}_{q^2}^\times \to C_{q+1}$ for which $\alpha_0'' \colon \mathbb{T}_q \to C_{q+1}$ is an isomorphism.
Take $\alpha = \alpha_0$ above, and let $\tilde{B}\in \mathbb{C}[C_{2(q+1)}]^{X \times X}$ be the resulting roux with entries $\tilde{B}_{ii} = 0$ and $\tilde{B}_{ij} =  \overline{\alpha_0''(-(t_i,t_j))}$ for $i \neq j$ in $X$.
For the moment, we view $\tilde{B}$ as a roux over $C_{2(q+1)}$, and we compute its roux parameters $\{c_w\}_{w\in C_{2(q+1)}}$.
For any $w \in C_{2(q+1)}$, Lemmas~\ref{lem.cgs of radical roux} and~\ref{lem:PSUdcsts}
combine to show that 
\begin{equation}
\label{eq:PSUcw}
c_w = 
\frac{q^3}{|G_1^*|}\cdot  \left| \{ \eta_e \xi_{a,b} \in G_1^* : b\neq 0 \text{ and } \alpha_0'(b) = z w \} \right|
= \frac{q^3(q^2 - 1)}{|K||N|}\cdot \sum_{ \substack{ b \in \mathbb{F}_{q^2}^\times, \\ \alpha'(b) = z w }} N_b
= \sum_{ \substack{ b \in \mathbb{F}_{q^2}^\times, \\ \alpha'(b) = z w }} N_b,
\end{equation}
where 
\[ N_b
=
| \{ a \in \mathbb{F}_{q^2} : a^{q+1} + b + b^q = 0 \} |
\qquad
(b \in \mathbb{F}_{q^2}^\times).
\]
Since $a\mapsto a^{q+1}$ is a surjective homomorphism of $\mathbb{F}_{q^2}^\times$ onto $\mathbb{F}_q^\times$, and since $-(b+b^q)\in\mathbb{F}_q$ for every $b\in\mathbb{F}_{q^2}$, we deduce that
\[
N_b
=\left\{\begin{array}{cl}
1&\text{if }b+b^q=0 \\
q+1&\text{else}
\end{array}\right\}
=\left\{\begin{array}{cl}
1&\text{if }b^{q-1}=-1 \\
q+1&\text{else}
\end{array}\right\}
\qquad
(b \in \mathbb{F}_{q^2}^\times).
\] 
Therefore,
\[
c_w
=\Big|\Big\{b\in\mathbb{F}_{q^2}^\times:b^{q-1}=-1\text{ and }\alpha_0'(b)=zw\Big\}\Big|
+(q+1)\Big|\Big\{b\in\mathbb{F}_{q^2}^\times:b^{q-1}\neq-1\text{ and }\alpha_0'(b)=zw\Big\}\Big|.
\]
Our choice of $z=\alpha_0''(-1)$ ensures that $\alpha_0'(b)=\alpha_0''(b^{q-1})=z$ whenever $b^{q-1}=-1$.
There are exactly $q-1$ choices of $b$ for which $b^{q-1}=-1$, and so 
\[
c_w
=(q-1)\delta_{w,1}
+(q+1)\Big|\Big\{b\in\mathbb{F}_{q^2}^\times:b^{q-1}\neq-1\text{ and }\alpha_0'(b)=zw\Big\}\Big|.
\]
For $w\not\in C_{q+1}=\operatorname{im}\alpha_0'$, we have $w\neq 1$, and there does not exist $b\in\mathbb{F}_{q^2}^\times$ for which $w=z\alpha_0'(b)$ since $z\in\operatorname{im}\alpha_0'$.
Hence, $c_w=0$ whenever $w\not\in C_{q+1}$.
Given $w\in C_{q+1}$, we have $zw\in\operatorname{im}\alpha_0'$, and there are exactly $|\operatorname{ker}\alpha_0'|=q-1$ choices of $b\in\mathbb{F}_{q^2}^\times$ for which $\alpha_0'(b)=zw$.
For $w=1$, any such $b$ satisfies $b^{q-1} = -1$ by our choice of $z$, and so
$c_1 = q-1$.
For $w \in C_{q+1} \setminus \{1\}$, any such $b$ satisfies $b^{q-1}\neq-1$, and so $c_w =q^2-1$.

To prove (a), view $\tilde{B}$ as a roux over $C_{q+1}$, and observe that its roux parameters $\{c_w\}_{w\in C_{q+1}}$ are as given above.
Indeed, $c_w = 0$ when $w \in C_{2(q+1)}\setminus C_{q+1}$, and so $\tilde{B}^2 = (n-1) I + \sum_{w\in C_{2(q+1)}} c_w w \tilde{B} = (n-1) I + \sum_{w\in C_{q+1}} c_w w \tilde{B}$.
Next, define $\gamma \colon C_{q+1} \to \mathbb{T}_q$ to be the inverse mapping of $\alpha_0''$, so that $\gamma\bigl( \overline{\alpha_0''(a)} \bigr) = a$ for every $a \in \mathbb{T}_q$.
Then $\gamma$ extends to an isomorphism $\tilde{\gamma} \colon \mathbb{C}[C_{q+1}]^{X \times X} \to \mathbb{C}[\mathbb{T}_q]^{X\times X}$, and $B:= \tilde{\gamma}(\tilde{B}) \in \mathbb{C}[\mathbb{T}_q]^{X\times X}$ is a roux with entries $B_{ii} = 0$ and $B_{ij} = \gamma( \tilde{B}_{ij} ) = - (t_i,t_j)^{q-1}$ for every $i \neq j$.
Considering the parameters for $\tilde{B}$ we found above, then $B$ has roux parameters $\{ c_a \}_{a\in \mathbb{T}_q}$ with $c_1 = q-1$ and $c_a = q^2 - 1$ for $a \in \mathbb{T}_q \setminus \{1\}$.
This proves (a).

For (b), we apply Proposition~\ref{prop.roux primitive idempotents} to compute the ranks $\{ d_\beta^\epsilon \}_{\beta \in \hat{\mathbb{T}_q}, \epsilon \in \{1,-1\}}$ of the primitive idempotents in the roux scheme given by $B$.
Given a character $\beta \colon \mathbb{T}_q \to \mathbb{T}$, we compute
\[
\hat{c}_{\beta} = \sum_{a \in \mathbb{T}_q} c_a \overline{\beta(a)}
= c_1 \overline{\beta(1)} + \sum_{a \in \mathbb{T}_q \setminus \{1\} } c_a \overline{\beta(a)}
= (q-1) + (q^2-1) \cdot \overline{ \sum_{a\in \mathbb{T}_q \setminus\{1\} } \beta(a) }.
\]
When $\beta=1$ is the trivial character, this yields $\hat{c}_1 = (q-1) + (q^2-1)q = q^3-1 = n-2$.
In Proposition~\ref{prop.roux primitive idempotents}, we obtain $\mu_1^+ = 1$ and $\mu_1^- = (1-n)^{-1}$, so that $d_1^+ = 1$ and $d_1^- = n-1$.
Now assume $\beta$ is nontrivial.
Then $\hat{c}_{\beta} = (q-1) - (q^2-1) = q - q^2$.
In Proposition~\ref{prop.roux primitive idempotents}, we obtain $\mu_\beta^+ = q^{-2}$ and $\mu_\beta^- = -q^{-1}$, so that $d_\beta^+ = q(q^2-q+1)$ and $d_\beta^- = q^2-q+1$.
In particular, Proposition~\ref{prop.two reps of same lines} implies that $- \hat{\beta}(B)$ is the signature matrix of a $d\times n$ ETF with $d = d_{\beta^{-1}}^- = q^2-q+1$ and $n = |X| = q^3+1$.
Furthermore, the lines $\mathscr{L}_\beta$ spanned by the vectors in this ETF are real if and only if $\beta$ takes real values, by Proposition~\ref{prop.real line detector}.
To see that $\operatorname{Aut} \mathscr{L}_\beta \geq \operatorname{PSU}(3,q)$, let $\sigma \colon X \to X$ be a permutation of isotropic line indices that is implemented by a unitary $U \in \operatorname{SU}(3,q)$.
Then for each $i \in X$ there exists $a_i \in \mathbb{F}_{q^2}^\times$ such that $U a_i t_i = t_{\sigma(i)}$.
Given $i,j \in X$ with $i \neq j$, it holds that 
\[
\hat{\beta}(B)_{\sigma(i),\sigma(j)} = \beta \bigl( - (t_{\sigma(i)},t_{\sigma(j)})^{q-1} \bigr)
= \beta \bigl( - (U a_i t_i, U a_j t_j)^{q-1} \bigr)
= \beta \bigl( - a_i^{q-1} a_j^{q(q-1)} (t_i, t_j)^{q-1} \bigr)
\]
\[
= \beta\bigl(a_i^{q-1}\bigr) \overline{\beta\bigl(a_j^{q-1}\bigr)} \hat{\beta}(B)_{ij}.
\]
Overall, the result of permuting the entries of $\hat{\beta}(B)$ with $\sigma$ is the same as that of applying a switching equivalence to $\hat{\beta}(B)$.
Then $\sigma \in \operatorname{Aut} \mathscr{L}_\beta$, by Proposition~\ref{prop:switching equiv is unitary equiv}.
This proves (b).

It remains to prove (c).
Let $\mathscr{L}$ be a sequence of $n \geq 2d > 2$ lines for $\mathbb{C}^d$ for which $\operatorname{Aut} \mathscr{L} \geq \operatorname{PSU}(3,q)$.
Then we can take $X$ as an indexing set for $\mathscr{L}$, and $n = |X| = q^3+1$.
By Theorem~\ref{thm.radicalization}, there is a character $\alpha \colon G_1^* \to \mathbb{T}$ for which the radicalization of $(G^*,G_1^*,\alpha)$ is a Higman pair whose Schurian scheme has a primitive idempotent that represents $\mathscr{L}$.
By the above, $\alpha(\eta_e \xi_{a,b}) = \alpha''(e^{q-1})$ for some character $\alpha'' \colon \mathbb{T}_q \to C_{q+1}$, and $\mathscr{L}$ can be obtained from a primitive idempotent in the roux scheme of $\tilde{B} \in \mathbb{C}[C_{r'}]^{X \times X}$ with entries $\tilde{B}_{ii} = 0$ and $\tilde{B}_{ij} = \overline{ \alpha''\bigl( - (t_i,t_j)^{q-1} \bigr) }$ for $i \neq j$, where $r' := | \operatorname{im} \alpha''|$.
Considering Proposition~\ref{prop.two reps of same lines}, there exists a character $\beta' \colon C_{r'} \to \mathbb{T}$ and $\epsilon \in \{1,-1\}$ such that $\epsilon\hat{\beta'}(\tilde{B})$ is the signature matrix of an ETF whose vectors span $\mathscr{L}$.
To see this has the form of (b), take $\beta \colon \mathbb{T}_q \to \mathbb{T}$ to be the character given by $\beta(a) = \beta'\bigl( \overline{ \alpha''(a) } \bigr)$ for every $a \in \mathbb{T}_q$.
Then $\epsilon \hat{\beta'}(\tilde{B}) = \epsilon \hat{\beta}(B)$.
By Proposition~\ref{prop.two reps of same lines}, $d = d_{\beta^{-1}}^\epsilon$ is as computed above.
Then the condition $n \geq 2d > 2$ ensures that $\beta$ is nontrivial and $\epsilon = -1$.
This completes the proof.
\end{proof}

\subsection{Suzuki groups}
We now consider the doubly transitive action of a Suzuki group $\operatorname{Sz}(q)$, where $q = 2^{2m+1} > 2$ as in Proposition~\ref{prop.classification}(iv).
Our primary reference is \S XI.3 in~\cite{HuppertB:82:III}.
Fix the following notation for the duration of this subsection.
Set $t = 2^{m+1}$.
Given $a,b \in \mathbb{F}_q$ and $e \in \mathbb{F}_q^\times$, put
\[
\xi_{a,b}
:=
\begin{bmatrix}
1 & 0 & 0 & 0 \\
a & 1 & 0 & 0 \\
b & a^t & 1 & 0 \\
a^{2+t} + ab + b^t & a^{t + 1} + b & a & 1
\end{bmatrix},
\qquad
\eta_e
:=
\begin{bmatrix}
e^{1+2^m} & 0 & 0 & 0 \\
0 & e^{2^m} & 0 & 0 \\
0 & 0 & e^{-2^m} & 0 \\
0 & 0 & 0 & e^{-1-2^m}
\end{bmatrix}.
\]
Also define $x \in \operatorname{GL}(4,q)$ to be the anti-diagonal matrix with entries $x_{ij} = \delta_{i,5-j}$.
By definition,
\[ \operatorname{Sz}(q) := \langle \xi_{a,b}, \eta_e, x : a,b \in \mathbb{F}_q, e \in \mathbb{F}_q^\times \rangle \leq \operatorname{GL}(4,q). \]
It has a doubly transitive permutation representation with point stabilizer
\[ G_1 := \langle \xi_{a,b}, \eta_e : a,b \in \mathbb{F}_q, e \in \mathbb{F}_q^\times \rangle. \]

\begin{theorem}
\label{thm.Suzuki groups}
There does not exist a sequence $\mathscr{L}$ of $n = q^2+1 \geq 2d > 2$ doubly transitive lines with span $\mathbb{C}^d$ and $\operatorname{Aut} \mathscr{L} \geq \operatorname{Sz}(q)$ as in Proposition~\ref{prop.classification}(iv).
\end{theorem}

By~\cite{AlperinGorenstein:66}, the Schur multiplier of $\operatorname{Sz}(q)$ is trivial with a single exception, addressed below.

\begin{lemma}[Computer-assisted result]
\label{lem.Sz8}
There does not exist a sequence $\mathscr{L}$ of $n \geq 2d > 2$ doubly transitive lines with span $\mathbb{C}^d$ and $\operatorname{Aut} \mathscr{L} \geq \operatorname{Sz}(8)$ as in Proposition~\ref{prop.classification}(iv).
\end{lemma}

\begin{proof}[Computer-assisted proof]
The Schur multiplier of $G := \operatorname{Sz}(8)$ is $\mathbb{Z}_2 \times \mathbb{Z}_2$, by Theorem~2 in~\cite{AlperinGorenstein:66}.
We used the AtlasRep~\cite{AtlasRep} package in GAP~\cite{GAP} to obtain a Schur covering $\pi \colon G^* \to G$.
Then we proceeded as in the proof of Lemma~\ref{lem.PSL29}.
No nontrivial character $\alpha \colon G_1^* \to \mathbb{T}$ produced a Higman pair through radicalization.
Consequently, any sequence of lines whose automorphism group contains $\operatorname{Sz}(8)$ spans a space of dimension $d \in \{1,n-1\}$.
(Our code is available online~\cite{github}.)
\end{proof}

We supply our own proof for the following proposition, which we could not locate elsewhere.

\begin{proposition}
\label{prop:Szstab}
The point stabilizer $G_1 \leq \operatorname{Sz}(q)$ is a semidirect product $G_1 = N \rtimes K$, where 
\[ N := \{ \xi_{a,b} : a,b \in \mathbb{F}_q \}, \qquad K := \{ \eta_e : e \in \mathbb{F}_q^\times \}. \]
Furthermore, the linear characters $\alpha\colon G_1^* \to \mathbb{T}$ are in one-to-one correspondence with the characters $\alpha'\in\widehat{\mathbb{F}_{q}^\times}$ through the relation $\alpha(\xi_{a,b} \eta_e )=\alpha'(e)$.
\end{proposition}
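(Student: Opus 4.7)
The plan has two parts. For the structural claim $G_0 = N \rtimes K$, I would verify directly from the matrix forms of $\xi_{a,b}$ and $\eta_e$ the relations
\[
\xi_{a,b}\xi_{c,d} = \xi_{a+c,\, b+d+a^tc}, \quad \xi_{a,b}^{-1} = \xi_{a,\, b+a^{t+1}}, \quad \eta_e\eta_f = \eta_{ef}, \quad \eta_e \xi_{a,b}\eta_e^{-1} = \xi_{e^{-1}a,\, e^{-1-t}b}.
\]
Here the formula $\xi_{a,b}^{-1} = \xi_{a,b+a^{t+1}}$ uses characteristic $2$, and the conjugation relation rests on the key congruence $t^2 = 2^{2m+2} \equiv 2 \pmod{q-1}$, which is precisely what makes the bottom-left entry of $\eta_e\xi_{a,b}\eta_e^{-1}$ still of the form $c^{2+t}+cd+d^t$ for $c = e^{-1}a$, $d = e^{-1-t}b$. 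These relations show $N$ and $K$ are subgroups, $K$ normalizes $N$, and $N\cap K = \{I\}$ (the diagonal entries of $\eta_e$ differ from $1$ unless $e=1$). Combined with the known order $|G_0| = q^2(q-1)$ from~\cite[\S XI.3]{HuppertB:82:III}, this forces $G_0 = N \rtimes K$.

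For the character bijection, I would first note that for $q \neq 8$ the Schur multiplier of $\operatorname{Sz}(q)$ is trivial by~\cite{AlperinGorenstein:66}, so that $G_0^* = G_0$ (the case $q=8$ is Lemma~\ref{lem.Sz8}). Since $G_0/N \cong K \cong \mathbb{F}_q^\times$ is abelian, $[G_0,G_0] \subseteq N$ automatically; the task is to prove the reverse inclusion $N \subseteq [G_0,G_0]$. Once this is in hand, every linear character of $G_0$ factors through the projection $G_0 \twoheadrightarrow K \cong \mathbb{F}_q^\times$, and the parametrization $\alpha(\xi_{a,b}\eta_e) = \alpha'(e)$ follows since the map $\alpha' \mapsto \alpha' \circ \pi_K$ is an isomorphism $\widehat{\mathbb{F}_q^\times} \to \widehat{G_0}$.

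To prove $N \subseteq [G_0,G_0]$, I would compute, using the relations above,
\[
[\eta_e, \xi_{a,b}] = \xi_{e^{-1}a,\, e^{-1-t}b}\, \xi_{a,\, b+a^{t+1}} = \xi_{a(e^{-1}+1),\, b(e^{-1-t}+1)\, +\, a^{t+1}(1+e^{-t})}.
\]
Specializing to $a=0$ yields $[\eta_e, \xi_{0,b}] = \xi_{0,\, b(e^{-1-t}+1)}$, which ranges over $\{\xi_{0,b'} : b' \in \mathbb{F}_q\}$ as $b$ varies provided some $e \neq 1$ satisfies $e^{1+t} \neq 1$. I would establish this by showing $\gcd(1+t,\, q-1) = 1$: from $(1+t)(t-1) = t^2 - 1 = 2q - 1 = 2(q-1)+1$, any common divisor of $1+t$ and $q-1$ must divide $1$. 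Having produced all $\xi_{0,b'}$, I would then use $[\eta_e,\xi_{a,0}] = \xi_{a(e^{-1}+1),\, a^{t+1}(1+e^{-t})}$, whose first coordinate exhausts $\mathbb{F}_q$ as $a$ varies, and kill the residual second coordinate by multiplying by an already-captured element of the form $\xi_{0,b'}$.

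The main obstacle I anticipate is the coprimality $\gcd(1+t, q-1) = 1$: without it, the commutators $[\eta_e, \xi_{0,b}]$ alone would fail to generate $\{\xi_{0,b}\}$, and one would need to exploit the (nonabelian, exponent-$4$) internal structure of $N$, making the argument significantly more delicate. Fortunately the identity $(1+t)(t-1) = 2(q-1)+1$ makes this coprimality automatic in every characteristic-$2$ Suzuki setting, so the commutator calculation closes cleanly.
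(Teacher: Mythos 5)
Your proposal is correct and follows essentially the same route as the paper: both reduce the character claim to showing $N \leq [G_0,G_0]$ and establish this via commutators of $\eta_e$ with elements of $N$ for a suitable $e \neq 1$ satisfying $e^{t+1} \neq 1$, after which every linear character factors through $G_0/N \cong \mathbb{F}_q^\times$. The only differences are cosmetic --- the paper realizes each $\xi_{a,b}$ as a single commutator by solving for $f,g$, whereas you first generate $\{\xi_{0,b'}\}$ and then correct the second coordinate, and your identity $(1+t)(t-1)=2(q-1)+1$ giving $\gcd(1+t,q-1)=1$ is a cleaner (and stronger) replacement for the paper's counting argument that some $e$ with $e^{t+1}\neq 1$ exists.
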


\begin{proof}
In~\cite[Lemma~XI.3.1]{HuppertB:82:III} it is shown that $G_1 = N \rtimes K$.
Moreover, the group operation for $G_1$ satisfies the basic identities
\[
\xi_{a,b} \xi_{f,g} = \xi_{a+f, b + g + a^t f},
\qquad
\eta_e \eta_h = \eta_{eh},
\qquad
\eta_e^{-1} \xi_{a,b} \eta_e = \xi_{ea, e^{t + 1} b}
\qquad
(a,b,f,g \in \mathbb{F}_q,\, e, h \in \mathbb{F}_q^\times). 
\]
Consequently, there is a well-defined epimorphism $\varphi \colon G_1 \to \mathbb{F}_q^\times$ given by $\varphi(\xi_{a,b} \eta_e) = e$.
Each $\alpha' \in \widehat{ \mathbb{F}_q^\times }$ determines a linear character $\alpha := \alpha' \circ \varphi \colon G_1 \to \mathbb{T}$, and the mapping $\alpha' \mapsto \alpha$ is injective.
It remains to prove that $[G_1,G_1]$ contains $\ker \varphi = N$, so that every linear character $\alpha \colon G_1 \to \mathbb{T}$ factors through $\varphi$.

Since $m > 0$ we have $2^{m+1}+1 < 2^{2m+1} - 1 = q-1$, and so there exists $e \in \mathbb{F}_q^\times$ such that $e^{t+1} \neq 1$.
Fix such an $e$.
Given any $a,b \in \mathbb{F}_q$, define 
\[
f := (e-1)^{-1}a,
\qquad
g := (e^{t + 1} - 1)^{-1}[b-(1-e^t)f^{t+1}].
\]
Then
\[
\eta_{e}^{-1} \xi_{f,g} \eta_e \xi_{a,b}^{-1}
=
\xi_{ef, e^{t + 1} g} \xi_{-f,-g+f^{t+1}}
=
\xi_{ (e-1)^{-1} f, (e^{t+1}-1)g+(1-e^t)f^{t+1}}
=
\xi_{a,b},
\]
so that $\xi_{a,b} \in [G_1,G_1]$.
\end{proof}

\begin{proof}[Proof of Theorem~\ref{thm.Suzuki groups}]
We can assume $q > 8$. 
Then the Schur muliplier is trivial by~\cite[Theorem~1]{AlperinGorenstein:66}, and so $G^* = G$ is its own Schur cover.
Let $\alpha \colon G_1 \to \mathbb{T}$ be any nontrivial linear character of $G_1$, and let $\alpha' \in \widehat{ \mathbb{F}_q^\times }$ be the corresponding character in Proposition~\ref{prop:Szstab}.
Choose any $e \in \mathbb{F}_q^\times$ such that $\alpha'(e) \neq 1$.
Since $\mathbb{F}_q^\times \cong C_{q-1}$ and $q$ is even, there exists $h \in \mathbb{F}_q^\times$ with $h^2 = e$.
Then $x \eta_h x^{-1} = \eta_{h^{-1}} \in G_1$, and yet $\alpha'(h) \neq \alpha'(h^{-1})$.
By Theorem~\ref{thm.radicalization to Higman}, the radicalization of $(G,G_1, \alpha)$ is not a Higman pair.
From Theorem~\ref{thm.radicalization} and Lemma~\ref{lem.alpha=1 is dumb} we conclude that any sequence of lines admitting $\operatorname{Sz}(q)$ as a group of automorphisms spans a space of dimension $d \in \{1,n-1\}$.
\end{proof}

\subsection{Ree groups}

Next, we find all line sequences whose automorphism group contains the Ree group ${}^2G_2(q)$, where $q = 3^{2m+1}$.
Our primary reference is~\cite[Section 3]{Baarnhielm:14}, which compiles information about Ree groups taken from \cite{HuppertB:82:III,KemperLM:01,Levchuk:85,Ward:66,Wilson:09}.
We use the following notation in this subsection.
Set $t = 3^m$.
Given $a,b,c \in \mathbb{F}_q$, put $\xi_{a,b,c} := \xi_{a,0,0} \xi_{0,b,0} \xi_{0,0,c}$, where
\[
\xi_{a,0,0}
:=
\left[ \begin{array}{ccccccc}
1 & a^t & 0 & 0 & -a^{3t+1} & -a^{3t+2} & a^{4t+2} \\
0 & 1 & a & a^{t+1} & -a^{2t+1} & 0 & -a^{3t+2} \\
0 & 0 & 1 & a^t & -a^{2t} & 0 & a^{3t+1} \\
0 & 0 & 0 & 1 & a^t & 0 & 0 \\
0 & 0 & 0 & 0 & 1 & -a & a^{t+1} \\
0 & 0 & 0 & 0 & 0 & 1 & -a^t \\
0 & 0 & 0 & 0 & 0 & 0 & 1
\end{array} \right],
\]
\[
\xi_{0,b,0}
:=
\left[ \begin{array}{ccccccc}
1 & 0 & -b^t & 0 & -b & 0 & -b^{t+1} \\
0 & 1 & 0 & b^t & 0 & -b^{2t} & 0 \\
0 & 0 & 1 & 0 & 0 & 0 & b \\
0 & 0 & 0 & 1 & 0 & b^t & 0 \\
0 & 0 & 0 & 0 & 1 & 0 & b^t \\
0 & 0 & 0 & 0 & 0 & 1 & 0 \\
0 & 0 & 0 & 0 & 0 & 0 & 1
\end{array} \right],
\qquad
\xi_{0,0,c}
:=
\left[ \begin{array}{ccccccc}
1 & 0 & 0 & -c^t & 0 & -c & -c^{2t} \\
0 & 1 & 0 & 0 & -c^t & 0 & c \\
0 & 0 & 1 & 0 & 0 & c^t & 0 \\
0 & 0 & 0 & 1 & 0 & 0 & -c^t \\
0 & 0 & 0 & 0 & 1 & 0 & 0 \\
0 & 0 & 0 & 0 & 0 & 1 & 0 \\
0 & 0 & 0 & 0 & 0 & 0 & 1
\end{array} \right].
\]
For $e \in \mathbb{F}_q^\times$, put
\[
\eta_e := \operatorname{diag}(e^t, e^{1-t}, e^{2t-1}, 1, e^{1-2t}, e^{t-1}, e^{-t}).
\]
Finally, define $x \in \operatorname{GL}(7,q)$ to be the anti-diagonal matrix with entries $x_{ij} = - \delta_{i,8-j}$.
By definition,
\[
{}^2G_2(q) := \langle \xi_{a,b,c}, \eta_e, x : a,b,c \in \mathbb{F}_q, e \in \mathbb{F}_q^\times \rangle \leq \operatorname{GL}(7,q).
\]
It acts doubly transitively on $n := q^3 + 1$ points with point stabilizer
\[
G_1 := \langle \xi_{a,b,c}, \eta_e : a,b,c \in \mathbb{F}_q, e \in \mathbb{F}_q^\times \rangle
=
N \rtimes K,
\]
where
\[
N := \left\{ \xi_{a,b,c} : a,b,c \in \mathbb{F}_q \right\},
\qquad
K := \left\{ \eta_e : e \in \mathbb{F}_q^\times \right\}.
\]
Multiplication in $G_1$ follows the basic identities
\[
\xi_{a,b,c} \xi_{f,g,h}
=
\xi_{a+f,b+g-af^{3t},c+h-fb+af^{3t+1}-a^2f^{3t}},
\qquad
\xi_{a,b,c}^{-1}
=
\xi_{-a, -b-a^{3t+1}, -c-ab+a^{3t+2} },
\]
\[
\eta_e^{-1} \xi_{a,b,c} \eta_e
=
\xi_{e^{3t-2} a, e^{1-3t} b, e^{-1} c},
\qquad
\eta_e \eta_k = \eta_{ek}.
\]

\begin{theorem}
\label{thm:Ree}
Let $\mathscr{L}$ be a sequence of $n > d$ doubly transitive lines with span $\mathbb{C}^d$ and $\operatorname{Aut} \mathscr{L} \geq {}^2G_2(q)$ as in Proposition~\ref{prop.classification}(v).
Then $\mathscr{L}$ is real, hence described by Proposition~\ref{prop:real2tran}(iv).
\end{theorem}

Our technique for proving Theorem~\ref{thm:Ree} requires $q>3$.
We dispose of the case $q=3$ by computer, as below.

\begin{lemma}
[Computer-assisted result]
Assume $q=3$.
Let $\mathscr{L}$ be a sequence of $n > d$ doubly transitive lines with span $\mathbb{C}^d$ and $\operatorname{Aut} \mathscr{L} \geq {}^2G_2(3)$ as in Proposition~\ref{prop.classification}(v).
Then $\mathscr{L}$ is real.
\end{lemma}

\begin{proof}[Computer-assisted proof]
We proceeded as in the proof of Lemma~\ref{lem.PSL29}, using GAP to compute a Schur cover $G^*$.
For every linear character $\alpha \colon G_1^* \to \mathbb{T}$, we checked to see if the radicalization of $(G^*,G_1^*,\alpha)$ was a Higman pair.
If it was, we computed roux parameters.
Then we applied the real roux lines detector of Propositions~\ref{prop.two reps of same lines} and~\ref{prop.real line detector}.
In every case, we found that the resulting lines were real.
(Our code is available online~\cite{github}.)
\end{proof}

\begin{proposition}
\label{prop:ReeStab}
Assume $q > 3$.
Then the linear characters $\alpha \colon G_1 \to \mathbb{F}_q$ of the point stabilizer $G_1 \leq {}^2G_2(q)$ are in one-to-one correspondence with the characters $\alpha' \in \mathbb{F}_q^\times$ through the relation $\alpha(\xi_{a,b,c} \eta_e) = \alpha'(e)$.
\end{proposition}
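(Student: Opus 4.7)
My plan mirrors the approach already used in Propositions~\ref{prop.SL stabilizer}, \ref{prop:PSU stabilizer}, and \ref{prop:Szstab}. Since the Schur multiplier of ${}^2G_2(q)$ is trivial for $q > 3$, we have $G^* = G$, and the proposition concerns honest linear characters of $G_0$ itself. Using the semidirect product structure $G_0 = N \rtimes K$ with $K \cong \mathbb{F}_q^\times$, I would first define the epimorphism $\varphi \colon G_0 \to \mathbb{F}_q^\times$ by $\varphi(\xi_{a,b,c} \eta_e) = e$. The assignment $\alpha' \mapsto \alpha' \circ \varphi$ is then an injection from $\widehat{\mathbb{F}_q^\times}$ into the linear characters of $G_0$, so it suffices to prove $N = \ker\varphi \subseteq [G_0,G_0]$; every linear character vanishes on the derived subgroup and must therefore factor through $G_0/N \cong \mathbb{F}_q^\times$.

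To exhibit $N$ as a set of commutators, I would sweep out $N$ in three stages corresponding to the filtration by the three coordinates. First, the conjugation rule gives $\eta_e \xi_{0,0,c} \eta_e^{-1} = \xi_{0,0,ec}$, and since $\xi_{0,0,c}\xi_{0,0,h} = \xi_{0,0,c+h}$, the commutator simplifies to $[\eta_e, \xi_{0,0,c}] = \xi_{0,0,(e-1)c}$. Choosing any $e \in \mathbb{F}_q^\times \setminus \{1\}$ (possible since $q>3$), this yields $\{\xi_{0,0,c} : c \in \mathbb{F}_q\} \subseteq [G_0,G_0]$. Next, the analogous computation with $\xi_{0,b,0}$ gives $[\eta_e, \xi_{0,b,0}] = \xi_{0,(e^{3t-1}-1)b,0}$, and since $\gcd(3t-1,q-1) = 3^{\gcd(m+1,2m+1)}-1 = 2 < q - 1$, there exists $e$ with $e^{3t-1} \neq 1$. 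Multiplying by elements from the previous stage (note $\xi_{0,b,0}\xi_{0,0,c} = \xi_{0,b,c}$) shows $\{\xi_{0,b,c} : b,c\in\mathbb{F}_q\} \subseteq [G_0,G_0]$.

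The final stage handles $\xi_{a,0,0}$ with $a \neq 0$. From $\eta_e \xi_{a,0,0} \eta_e^{-1} = \xi_{e^{2-3t}a, 0, 0}$ and the product rule in $N$, the first coordinate of $[\eta_e, \xi_{a,0,0}] = \xi_{e^{2-3t}a,0,0}\,\xi_{a,0,0}^{-1}$ is $(e^{2-3t}-1)a$. Since $3t-2 = 3^{m+1}-2 < 3^{2m+1}-1 = q-1$ for $m \geq 1$, we may choose $e$ with $e^{2-3t} \neq 1$, and then the first coordinate ranges over all of $\mathbb{F}_q$ as $a$ varies. Any residue in the second and third coordinates lies in $\{\xi_{0,b',c'}\}$, which already sits inside $[G_0,G_0]$ by the previous stage, so right-multiplying by a suitable such element produces arbitrary $\xi_{a,b,c}$. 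This closes the argument.

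The main obstacle is purely bookkeeping: the second and third coordinates of $[\eta_e, \xi_{a,0,0}]$ are genuinely complicated expressions in $a$ and $e$, because $N$ is non-abelian and the inverse $\xi_{a,0,0}^{-1} = \xi_{-a,-a^{3t+1},a^{3t+2}}$ has nontrivial entries in every slot. The conceptual trick that makes the argument clean is to handle the three coordinates in the correct order (innermost first), so that the messy lower coordinates of any commutator can be absorbed into the subgroup already produced at earlier stages. The hypothesis $q > 3$ enters only to guarantee that $\mathbb{F}_q^\times$ is large enough to supply an $e$ for which each of the three scalars $e - 1$, $e^{3t-1}-1$, $e^{2-3t}-1$ is nonzero.
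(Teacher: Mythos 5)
Your proposal is correct, and its first half (the epimorphism $\varphi(\xi_{a,b,c}\eta_e)=e$, injectivity of $\alpha'\mapsto\alpha'\circ\varphi$, and the reduction to showing $N=\ker\varphi\leq[G_0,G_0]$) is identical to the paper's. Where you diverge is in how $N\leq[G_0,G_0]$ is established. The paper fixes a single $e$ with $1\notin\{e,e^{3t-2},e^{3t-1}\}$ (by a counting argument) and, for each target $\xi_{a,b,c}$, writes down explicit formulas for $f,g,h$ so that the \emph{single} commutator $\eta_e^{-1}\xi_{f,g,h}\eta_e\,\xi_{f,g,h}^{-1}$ equals $\xi_{a,b,c}$ on the nose; this requires carrying the full (messy) second and third coordinates through the computation. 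You instead exploit that $[G_0,G_0]$ is a subgroup and sweep out $N$ along the filtration $\{\xi_{0,0,c}\}\subset\{\xi_{0,b,c}\}\subset N$, computing only the leading coordinate of each commutator $[\eta_e,\xi_{a,0,0}]$ and absorbing the uncontrolled lower coordinates into the stage already established. I checked your coordinate computations against the stated multiplication, inversion, and conjugation identities (in particular that $[\eta_e,\xi_{0,b,0}]$ has vanishing third coordinate, that $\xi_{0,b,0}\xi_{0,0,c}=\xi_{0,b,c}$, and that the residual of stage three has first coordinate $0$ so it lies in $\{\xi_{0,b'',c''}\}$), and your gcd/degree estimates guaranteeing suitable $e$ are also correct. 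Your route buys a cleaner argument that never needs the explicit $g$ and $h$ of the paper's proof, at the cost of three commutator computations instead of one; the paper's version is self-contained in a single displayed identity that can be verified mechanically.
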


We provide a proof since we could not locate a reference.

\begin{proof}
The quotient map of $G$ onto $G/N \cong K \cong \mathbb{F}_q^\times$ gives an epimorphism $\varphi \colon G_1 \to \mathbb{F}_q^\times$, $\varphi(\xi_{a,b,c} \eta_e) = e$.
Therefore every $\alpha' \in \widehat{ \mathbb{F}_q^\times }$ produces a linear character $\alpha := \alpha' \circ \varphi \colon G_1 \to \mathbb{T}$, and the mapping $\alpha' \mapsto \alpha$ is injective.
It suffices to prove that $[G_1,G_1]$ contains $\ker \varphi = N$, so that every linear character $\alpha \colon G_1 \to \mathbb{T}$ factors through $\varphi$.

Since $q = 3^{2m+1} > 3$, a simple counting argument shows there exists $e \in \mathbb{F}_q^\times$ such that $1 \notin \{e,e^{3t-2},e^{3t-1}\}$.
Fix such an $e$.
Given $a,b,c \in \mathbb{F}_q$, put
\[
f := (e^{3t-2} - 1)^{-1} a,
\qquad
g := (e^{1-3t} - 1)^{-1}[ b - (e^{3t-2} - 1)f^{3t+1} ],
\]
\[
h := (e^{-1} - 1)^{-1} [ c - (e^{1-3t} - 1)fg - (1 - e^{3t-2} + e^{6t-4})f^{3t+2} ].
\]
Then a direct calculation shows that
\[
\eta_e^{-1} \xi_{f,g,h} \eta_e \xi_{f,g,h}^{-1}
=
\xi_{
(e^{3t-2}  - 1)f,
(e^{1-3t} - 1)g + (e^{3t-2} - 1)f^{3t+1},
(e^{-1} - 1)h + (e^{1-3t} - 1)fg + (1-e^{3t-2} + e^{6t-4})f^{3t+2}
},
\]
which equals $\xi_{a,b,c}$ by construction.
\end{proof}

\begin{proof}[Proof of Theorem~\ref{thm:Ree}]
We may assume that $q>3$.
The Schur multiplier of $G= {}^2G_2(q)$ is trivial, by~\cite[Theorem~1]{AlperinGorenstein:66}.
Hence $G^*=G$ is its own Schur cover.
Choose any linear character $\alpha \colon G_1 \to \mathbb{T}$, and let $\alpha' \in \widehat{ \mathbb{F}_q }$ be the corresponding character from Proposition~\ref{prop:ReeStab}.
Given any $e \in \mathbb{F}_q^\times$, we have $x\eta_e x^{-1} = \eta_{e^{-1}} \in G_1$.
By Theorem~\ref{thm.radicalization to Higman}, the radicalization $(\widetilde{G}^*,H)$ of $(G,G_1,\alpha)$ is a Higman pair only if $\alpha'(e) = \alpha'(e)^{-1}$ for every $e \in \mathbb{F}_q$, only if $\alpha$ is real valued.
In that case, Lemma~\ref{lem:real involution} shows that the resulting doubly transitive lines are real.
We conclude from Theorem~\ref{thm.radicalization} that every sequence of linearly dependent lines $\mathscr{L}$ satisfying ${}^2G_2(q) \leq \operatorname{Aut}\mathscr{L}$ is real.
\end{proof}

\subsection{Symplectic groups}
\label{subsec:symplectic}
Next we consider the doubly transitive actions of symplectic groups $\operatorname{Sp}(2m,2)$, summarized below.
For details, see~\cite[Section~7.7]{DixonMortimer:96}, \cite[Chapters 12--14]{Grove:02}, \cite[Chapter~2]{KleidmanLiebeck:90}.
Fix $m \geq 3$.
We consider $\mathbb{F}_2^{2m}$ to consist of column vectors $u = \left[ \begin{array}{c} u_{(1)} \\ u_{(2)} \end{array} \right]$ with $u_{(i)} \in \mathbb{F}_2^m$, $i\in\{1,2\}$.
Then $\operatorname{Sp}(2m,2) \leq \operatorname{GL}(2m,2)$ is the group of all invertible operators that stabilize the symplectic form $[\cdot ,\cdot] \colon \mathbb{F}_2^{2m} \times \mathbb{F}_2^{2m} \to \mathbb{F}_2$ given by
\[ [u,v] = u_{(1)}^\top v_{(2)} + u_{(2)}^\top v_{(1)}. \]
Put $\delta_1 = [1,0,\dotsc,0]^\top \in \mathbb{F}_2^m$.
For each $\epsilon \in \{ \pm \}$, we define a quadratic form $Q^\epsilon \colon \mathbb{F}_2^{2m} \to \mathbb{F}_2$ given by
\[
Q^+(u) = u_{(1)}^\top u_{(2)},
\qquad
\qquad
Q^-(u) = u_{(1)}^\top u_{(2)} + (u_{(1)} + u_{(2)})^\top \delta_1.
\]
Thus $Q^\epsilon(u+v) + Q^\epsilon(u) + Q^\epsilon(v) = [u,v]$ for every $u,v \in \mathbb{F}_2^{2m}$.
We take the orthogonal group $\operatorname{O}^\epsilon(2m,2) \leq \operatorname{Sp}(2m,2)$ to be the group of all linear operators stabilizing $Q^\epsilon$.
Then $\operatorname{Sp}(2m,2)$ acts doubly transitively on $n = 2^{m-1}(2^m + \epsilon)$ points, and $\operatorname{O}^\epsilon(2m,2)$ is the stabilizer of a point.

\begin{theorem}
\label{thm:Sp2m2}
Let $\mathscr{L}$ be a sequence of $n > d$ doubly transitive lines with span $\mathbb{C}^d$ and $\operatorname{Aut} \mathscr{L} \geq \operatorname{Sp}(2m,2)$ as in Proposition~\ref{prop.classification}(vi).
Then $\mathscr{L}$ is real, hence described by Proposition~\ref{prop:real2tran}(v,vi).
\end{theorem}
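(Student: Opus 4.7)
The plan is to mirror our analysis of the Suzuki and Ree groups in this section: for each linear character $\alpha \colon G_0^* \to \mathbb{T}$ whose radicalization $(\widetilde{G}^*,H)$ of $(G^*,G_0^*,\alpha)$ is a Higman pair, I will verify that both hypotheses of Lemma~\ref{lem:real involution} hold, so the roux lines coming from $(\widetilde{G}^*,H)$ are real. Combined with Theorem~\ref{thm.radicalization}, this forces any $\mathscr{L}$ admitting $\operatorname{Sp}(2m,2)$ as a subgroup of $\operatorname{Aut}\mathscr{L}$ to be real, hence classified by Proposition~\ref{prop:real2tran}(iv,v).

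First I would identify a Schur covering $\pi \colon G^* \to G$. The multiplier $M(\operatorname{Sp}(2m,2))$ is trivial for $m \geq 4$, so in that range $G^* = G$ and $G_0^* = G_0 = \operatorname{O}^\epsilon(2m,2)$. The exceptional case $m=3$, where $M(\operatorname{Sp}(6,2)) \cong \mathbb{Z}_2$, I would dispose of by a computer-assisted lemma in the style of Lemma~\ref{lem.Sz8}: using AtlasRep, enumerate every linear character $\alpha$ of $G_0^*$, apply the Higman pair detector of Theorem~\ref{thm.radicalization to Higman}, and for each surviving $\alpha$ run the real roux lines detector of Propositions~\ref{prop.two reps of same lines} and~\ref{prop.real line detector} to confirm that the resulting ETF is real.

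For the generic case $m \geq 4$, the next step is to show that every linear character $\alpha \colon G_0 \to \mathbb{T}$ is real valued. For $m \geq 3$ the commutator subgroup $[\operatorname{O}^\epsilon(2m,2), \operatorname{O}^\epsilon(2m,2)]$ equals the kernel $\Omega^\epsilon(2m,2)$ of the Dickson invariant, a perfect subgroup of index $2$. Hence $G_0/[G_0,G_0] \cong \mathbb{Z}_2$ and every character of $G_0$ takes values in $\{\pm 1\} \subset \mathbb{R}$. In parallel with Propositions~\ref{prop.SL stabilizer} and~\ref{prop:Szstab} I would verify this by exhibiting a commutator expression realising a generating set of transvections in $\Omega^\epsilon(2m,2)$.

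Finally, to meet the involution hypothesis of Lemma~\ref{lem:real involution}, I would take a symplectic transvection $T_v(u) = u + (u,v)\,v$ with $Q^\epsilon(v) = 1$; then $T_v^2 = I$, and $T_v \notin \operatorname{O}^\epsilon(2m,2)$ since it fails to preserve $Q^\epsilon$. Such a $v$ exists because $Q^\epsilon$ attains both values on $\mathbb{F}_2^{2m}$. This gives an involution $x = T_v \in G \setminus G_0$, and Lemma~\ref{lem:real involution} closes the proof. The main obstacle is the Schur-cover bookkeeping in the exceptional case $m=3$: one must check that the $\mathbb{Z}_2$ central extension neither introduces new complex-valued linear characters on $G_0^*$ nor obstructs finding an involution outside $G_0^*$. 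The cleanest way to discharge this is the computer-assisted lemma described above, in the spirit of Lemmas~\ref{lem.PSL29} and~\ref{lem.Sz8}.
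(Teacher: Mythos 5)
Your overall strategy coincides with the paper's: dispose of $m=3$ by computer because $M(\operatorname{Sp}(6,2))\cong\mathbb{Z}_2$, and for $m\geq 4$ (trivial multiplier) note that $[G_0,G_0]$ has index $2$ in $G_0=\operatorname{O}^\epsilon(2m,2)$, so every linear character is real-valued, and then feed an involution in $\operatorname{Sp}(2m,2)\setminus\operatorname{O}^\epsilon(2m,2)$ into Lemma~\ref{lem:real involution}. (The paper simply cites \cite[Proposition~14.23]{Grove:02} for the index-$2$ commutator subgroup rather than re-deriving it from the Dickson invariant, but that is a cosmetic difference.) The one genuine error is in your construction of the involution: the condition on $v$ is backwards. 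Using $Q^\epsilon(a+b)=Q^\epsilon(a)+Q^\epsilon(b)+(a,b)$ and $(u,v)^2=(u,v)$ over $\mathbb{F}_2$, one computes
\[
Q^\epsilon\bigl(T_v(u)\bigr)=Q^\epsilon\bigl(u+(u,v)v\bigr)=Q^\epsilon(u)+(u,v)\bigl(Q^\epsilon(v)+1\bigr),
\]
so $T_v$ preserves $Q^\epsilon$ \emph{precisely when} $Q^\epsilon(v)=1$; these are the orthogonal transvections, which generate $\operatorname{O}^\epsilon(2m,2)$. With your choice $Q^\epsilon(v)=1$, the element $T_v$ lies \emph{inside} $G_0$, and Lemma~\ref{lem:real involution} cannot be applied with it. (A sanity check in $\mathbb{F}_2^2$ with $Q^+(u)=u_1u_2$: the vector $v=(1,1)$ has $Q^+(v)=1$ and $T_v$ is the coordinate swap, which visibly preserves $Q^+$.)

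The fix is immediate: take $v\neq 0$ \emph{singular}, i.e.\ $Q^\epsilon(v)=0$. Then $T_v$ is still a symplectic involution ($T_v^2=I$ since $(v,v)=0$), and by the displayed identity it moves the value of $Q^\epsilon$ on any $u$ with $(u,v)=1$, hence $T_v\notin\operatorname{O}^\epsilon(2m,2)$. Nonzero singular vectors exist for both types once $m\geq 2$: for $\epsilon=+$ take $v$ with $v_{(1)}=\delta_1$, $v_{(2)}=0$, giving $Q^+(v)=\delta_1^\top 0=0$; for $\epsilon=-$ take $v$ with $v_{(1)}=\delta_2$ (the second standard basis vector), $v_{(2)}=0$, giving $Q^-(v)=0+\delta_2^\top\delta_1=0$. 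With this correction your argument closes exactly as the paper's does: every real-valued character whose radicalization is a Higman pair yields only real lines by Lemma~\ref{lem:real involution}, and Theorem~\ref{thm.radicalization} then forces $\mathscr{L}$ to be real.
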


The Schur multiplier of $\operatorname{Sp}(2m,2)$ is trivial for $m \geq 4$.
We dispose of the case $m=3$ by computer.

\begin{lemma}[Computer-assisted result]
Assume $m=3$.
Let $\mathscr{L}$ be a sequence of $n > d$ doubly transitive lines with span $\mathbb{C}^d$ and $\operatorname{Aut} \mathscr{L} \geq \operatorname{Sp}(6,2)$ as in Proposition~\ref{prop.classification}(vi).
Then $\mathscr{L}$ is real.
\end{lemma}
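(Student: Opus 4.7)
The plan is to adapt verbatim the computer-assisted strategy used in Lemma~\ref{lem.PSL29}, Lemma~\ref{lem.PSL weirdos}, and Lemma~\ref{lem.Sz8}. When $m=3$, the group $G=\operatorname{Sp}(6,2)$ has two doubly transitive actions (on $n=28$ and $n=36$ points, corresponding to the orthogonal stabilizers $\operatorname{O}^{-}(6,2)$ and $\operatorname{O}^{+}(6,2)$). Its Schur multiplier is $\mathbb{Z}_2$, so first I would use GAP---for example, through the AtlasRep package---to construct a Schur covering $\pi\colon G^*\to G$, and for each of the two actions form the preimage $G_0^* = \pi^{-1}(G_0)$ of a point stabilizer. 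By Theorem~\ref{thm.radicalization}, any sequence $\mathscr{L}$ with $\operatorname{Sp}(6,2)\leq\operatorname{Aut}\mathscr{L}$ must be recoverable, together with a linear character $\alpha\colon G_0^*\to\mathbb{T}$, from the radicalization $(\widetilde{G}^*,H)$ of $(G^*,G_0^*,\alpha)$.

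Next I would enumerate all linear characters $\alpha\in\widehat{G_0^*}$, equivalently, the characters of $G_0^*/[G_0^*,G_0^*]$. The trivial character is dispensed with by Lemma~\ref{lem.alpha=1 is dumb}, whose only doubly transitive lines span a space of dimension $1$ or $n-1$ and are automatically real (their signature matrices are $\pm(J-I)$). For each nontrivial $\alpha$, I would pick a single convenient $x\in G^*\setminus G_0^*$ and apply the Higman pair detector (Theorem~\ref{thm.radicalization to Higman}): this reduces to verifying that $\alpha(x\xi x^{-1})=\alpha(\xi)$ whenever $\xi\in G_0^*$ satisfies $x\xi x^{-1}\in G_0^*$. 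For every $\alpha$ that survives this test, I would use Lemma~\ref{lem.key finder} to produce a key, Lemma~\ref{lem.cgs of radical roux} to compute the roux parameters $\{c_w\}_{w\in C_r}$, and then apply the real-lines detector (Propositions~\ref{prop.two reps of same lines} and~\ref{prop.real line detector}) together with Proposition~\ref{prop.roux primitive idempotents} to check that every primitive idempotent in the resulting roux scheme yields a sequence of real lines. The expected outcome---matching Proposition~\ref{prop:real2tran}(iv,v)---is that every surviving case produces only real lines, which combined with Theorem~\ref{thm.radicalization} proves the lemma.

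The main obstacle is purely computational rather than conceptual. Since $|\operatorname{Sp}(6,2)|=1\,451\,520$, its Schur cover has order $2\,903\,040$, and the two stabilizers $\operatorname{O}^{\pm}(6,2)$ have orders $|G|/n$ for $n\in\{28,36\}$, so a naive enumeration of $G_0^*\times G_0^*$ for the Higman pair detector would be wasteful. The algorithmic cost is kept in check by the fact that Theorem~\ref{thm.radicalization to Higman} only requires looping over $\{\xi\in G_0^*:x\xi x^{-1}\in G_0^*\}$ for a single fixed $x$, and that the character group $\widehat{G_0^*/[G_0^*,G_0^*]}$ is small. All of the machinery in Section~\ref{sec.roux tech} was designed precisely to make this loop tractable, so the verification should proceed uneventfully in GAP along the same lines as the prior computer-assisted lemmas, with the code posted at~\cite{github}.
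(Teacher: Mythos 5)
Your proposal matches the paper's own proof essentially verbatim: the paper likewise obtains a Schur double cover of $\operatorname{Sp}(6,2)$ via AtlasRep in GAP (after confirming the multiplier is $\mathbb{Z}_2$, which is sometimes misreported as trivial), treats the $\epsilon=+$ and $\epsilon=-$ actions separately, runs the Higman pair detector over all linear characters of $G_0^*$, and applies the real roux lines detector of Propositions~\ref{prop.two reps of same lines} and~\ref{prop.real line detector} to every surviving case. No substantive differences.
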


\begin{proof}[Computer-assisted proof]
We used GAP~\cite{GAP} to verify that the Schur multiplier of $G=\operatorname{Sp}(6,2)$ is $\mathbb{Z}_2$.
(Note that in some places it is falsely reported as being trivial.)
Using the AtlasRep package~\cite{AtlasRep}, we obtained a double cover $\pi \colon G^* \to G$, which we checked was a Schur covering.
Then we proceeded as in the proof of Lemma~\ref{lem.PSL29}, computing the cases $\epsilon = +$ and $\epsilon = -$ separately.
For each linear character $\alpha \colon G_1^* \to \mathbb{T}$, we checked to see if the radicalization $(\widetilde{G}^*,H)$ of $(G^*,G_1^*,\alpha)$ was a Higman pair.
When it was, we computed roux parameters.
In every such case, the real roux lines detector of Propositions~\ref{prop.two reps of same lines} and~\ref{prop.real line detector} verified that all lines coming from $(\widetilde{G}^*,H)$ were real.
(Our code is available online~\cite{github}.)
\end{proof}

\begin{proof}[Proof of Theorem~\ref{thm:Sp2m2}]
We may assume that $q \geq 4$.
Then the Schur multiplier of $G = \operatorname{Sp}(2m,2)$ is trivial~\cite{Steinberg:62}, so that $G^*=G$ is its own Schur cover.
Fix $\epsilon \in \{ \pm \}$, and let $G_1 = \operatorname{O}^\epsilon(2m,2)$.
By~\cite[Proposition~14.23]{Grove:02}, $[G_1,G_1] \leq G_1$ is a subgroup of index 2.
Hence $G_1$ has only real-valued linear characters.

Put $w := \left[ \begin{array}{c} \delta_1 \\ 0 \end{array} \right]$ when $\epsilon = +$, and $w := \left[ \begin{array}{c} \delta_1 \\ \delta_1 \end{array} \right]$ when $\epsilon = -$.
Define the transvection $\tau \colon \mathbb{F}_{2m}^2 \to \mathbb{F}_{2m}^2$ by $\tau(u) = u + [u,w] w$.
Then $\tau \in \operatorname{Sp}(2m,2)$ satisfies $\tau^2 = 1$, but one easily checks that $\tau \notin G_1$.
If $\alpha \colon G_1 \to \{ \pm 1 \}$ is any linear character for which the radicalization $(\widetilde{G},H)$ of $(G,G_1,\alpha)$ is a Higman pair, then Lemma~\ref{lem:real involution} implies that all lines coming from $(\widetilde{G},H)$ are real.
By Theorem~\ref{thm.radicalization}, every sequence of linearly dependent lines $\mathscr{L}$ such that $\operatorname{Sp}(2m,2) \leq \operatorname{Aut} \mathscr{L}$ is real.
\end{proof}

\subsection{Proof of the main result}

So far, we have applied our technology to all but four of the almost simple groups in Proposition~\ref{prop.classification}.
The remaining groups can be ticked off one by one.
We found it convenient to use a computer for this task in the following lemma, but all the relevant computations could also be done by hand.

\begin{lemma}[Computer-assisted result]
\label{lem: misc cases}
Suppose $\mathscr{L}$ is a sequence of $n > d$ lines spanning $\mathbb{C}^d$, and assume that $\operatorname{Aut} \mathscr{L}$ contains one of the groups from cases (vii), (viii), (xi), (xii) of~Proposition~\ref{prop.classification}.
Then $\mathscr{L}$ is real, hence described by Proposition~\ref{prop:real2tran}.
\end{lemma}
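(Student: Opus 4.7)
The plan is to follow the same template used in the earlier computer-assisted lemmas (e.g., Lemma~6.2, Lemma~6.5), handling each of the four groups $G \in \{ \operatorname{PSL}(2,11),\, A_7,\, \operatorname{HS},\, \operatorname{Co}_3\}$ in turn. For each such $G$, I would first produce a Schur covering $\pi \colon G^* \to G$ in GAP. For $\operatorname{PSL}(2,11)$ the Schur multiplier is $\mathbb{Z}_2$, giving $G^* = \operatorname{SL}(2,11)$; for $A_7$ the multiplier is $\mathbb{Z}_6$, and the resulting $6$-fold cover is readily available through the \textsc{AtlasRep} package~\cite{AtlasRep}; for $\operatorname{HS}$ the multiplier is $\mathbb{Z}_2$, again accessible via \textsc{AtlasRep}; and for $\operatorname{Co}_3$ the multiplier is trivial, so $G^* = G$. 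In each case I would verify that the candidate $\pi$ is actually a Schur covering by checking the three defining conditions.

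Having obtained $\pi$, let $G_0 \leq G$ be the stabilizer of a point of the doubly transitive action (of degree $n=11,15,176,276$ respectively), and put $G_0^* = \pi^{-1}(G_0)$. I would then enumerate all linear characters $\alpha \colon G_0^* \to \mathbb{T}$ (equivalently, the characters of the abelianization of $G_0^*$). The trivial character is handled by Lemma~\ref{lem.alpha=1 is dumb}: it forces $d \in \{1,n-1\}$ and hence $\mathscr{L}$ is real by Lemma~\ref{lem.3tran} (or directly by inspection). For each nontrivial $\alpha$, I would apply the Higman pair detector (Theorem~\ref{thm.radicalization to Higman}) with a fixed choice of $x \in G^* \setminus G_0^*$: compute all $\xi \in G_0^*$ for which $x\xi x^{-1} \in G_0^*$, and check whether $\alpha(x\xi x^{-1}) = \alpha(\xi)$ in every such instance. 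If this test fails, the radicalization of $(G^*,G_0^*,\alpha)$ is not a Higman pair and $\alpha$ contributes nothing.

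For each $\alpha$ that passes the detector, I would produce a key $b$ via Lemma~\ref{lem.key finder}, compute the roux parameters $\{c_w\}_{w \in C_r}$ via Lemma~\ref{lem.cgs of radical roux}, and then invoke the real roux lines detector (Proposition~\ref{prop.real line detector}) together with Proposition~\ref{prop.two reps of same lines}: for every $\beta \in \widehat{C_r}$, verify that $\beta(w) \in \mathbb{R}$ whenever $c_w \neq 0$. The claim is that for every $G$ on the list and every $\alpha$ that produces a Higman pair, this real-valuedness test succeeds, so that every doubly transitive line sequence $\mathscr{L}$ with $G \leq \operatorname{Aut}\mathscr{L}$ is real. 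Combining this with Theorem~\ref{thm.radicalization} (which asserts that every such $\mathscr{L}$ does in fact arise from some radicalization Higman pair) completes the proof, and Proposition~\ref{prop:real2tran} then supplies the classification.

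The main obstacle is purely computational rather than conceptual: the cases of $\operatorname{HS}$ and $\operatorname{Co}_3$ have sizable stabilizers ($|G_0| = 10{,}080$ and $|G_0| = 2{,}160{,}960$ respectively), so enumerating characters of $G_0^*$ and running the Higman pair detector requires care in choosing an efficient representation (a permutation or small-degree matrix representation from the Atlas rather than the regular representation). A practical shortcut that I expect to apply in all four cases is Lemma~\ref{lem:real involution}: each of these groups contains an involution $x \in G^* \setminus G_0^*$, so as soon as $\alpha$ is real-valued the corresponding lines are automatically real, and the only residual work is to show that no complex-valued $\alpha$ produces a Higman pair. The explicit GAP code implementing this strategy would be made available alongside the earlier scripts at~\cite{github}.
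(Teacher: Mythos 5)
Your proposal is correct and follows essentially the same route as the paper: obtain a Schur covering for each of $\operatorname{PSL}(2,11)$, $A_7$, $\operatorname{HS}$, $\operatorname{Co}_3$, run the Higman pair detector over all linear characters of $G_0^*$, and conclude reality via Lemma~\ref{lem.alpha=1 is dumb} or the involution shortcut of Lemma~\ref{lem:real involution} (which is in fact the mechanism the paper uses in every case, rather than computing roux parameters explicitly). The only details you do not anticipate are incidental computational outcomes, e.g.\ that $G_0^*$ for $\operatorname{SL}(2,11)$ has no nontrivial linear characters at all and that $\operatorname{HS}$ has two degree-$176$ doubly transitive representations to check.
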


\begin{proof}[Computer-assisted proof]
For each group $G$ with point stabilizer $G_1 \leq G$, we found a Schur covering $\pi \colon G^* \to G$ and put $G_1^* := \pi^{-1}(G_1)$.
Then we used the Higman pair detector (Theorem~\ref{thm.radicalization to Higman}) to find every linear character $\alpha \colon G_1^* \to \mathbb{T}$ for which the radicalization of $(G^*,G_1^*,\alpha)$ was a Higman pair.
In every case, we were able to apply Lemma~\ref{lem.alpha=1 is dumb} or Lemma~\ref{lem:real involution} to deduce that the corresponding lines were real.
It follows by Theorem~\ref{thm.radicalization} that $\mathscr{L}$ is real whenever $G \leq \operatorname{Aut} \mathscr{L}$.

Specifically, for case (vii) the quotient mapping $\pi \colon \operatorname{SL}(2,11) \to \operatorname{PSL}(2,11)$ is a Schur covering by \cite[Theorem~7.1.1]{Karpilovsky:87}.
Here $G_1^* \cong \operatorname{SL}(2,5)$ has no nontrivial linear characters at all.

For (viii), GAP~\cite{GAP} provided a Schur covering of $A_7$.
No nontrivial linear character produced a Higman pair through radicalization.

In case (xi), the Schur multiplier of the Higman--Sims group $G$ is $\mathbb{Z}_2$~\cite{McKayWales:71}.
We obtained a double cover $G^*$ of $G$ through the AtlasRep package~\cite{AtlasRep} of GAP, and verified that it was a Schur cover of $G$.
Only real-valued characters of $G_1^*$ produced Higman pairs through radicalization, and there exists an involution $x \in G^* \setminus G_1^*$.
By Lemma~\ref{lem:real involution}, these Higman pairs produce only real lines.

Finally, for (xii) the Schur multiplier of $G=\operatorname{Co}_3$ is trivial~\cite{Griess:74}, and so $G^* = G$ is its own Schur cover.
Here, $G_1$ has only real-valued characters to begin with.
Once again, there exists an involution $x \in G \setminus G_1$, and by Lemma~\ref{lem:real involution} any Higman pair that arises from radicalization can only create real lines.

For all cases, our code is available online~\cite{github}.
\end{proof}

\begin{proof}[Proof of Theorem~\ref{thm:main result}]
Let $\mathscr{L}$ be a sequence of $n \geq 2d > 2$ lines that span $\mathbb{C}^d$, and assume that $S = \operatorname{Aut} \mathscr{L}$ is doubly transitive and almost simple.
Then $S$ contains a subgroup $G$ such that $G\unlhd S\leq\operatorname{Aut}(G)$ and $G$ satisfies one of the cases (i)--(xii) of Proposition~\ref{prop.classification}.
In Subsections~\ref{subsec:linear}--\ref{subsec:symplectic} and Lemma~\ref{lem: misc cases}, we settled what happens in each instance except (i), (ix), and (x).
In each of those cases, $G$ is triply transitive: for (i), see \cite[Theorem~9.7]{Wielandt:64}; for (ix), see Theorems~XII.1.3 and XII.1.4 in~\cite{HuppertB:82:III}; and for (x) consult~\cite[Theorem~XII.1.9]{HuppertB:82:III}.
By Lemma~\ref{lem.3tran}, each remaining case violates the hypothesis $n \geq 2d > 2$.
\end{proof}

\begin{proof}[Proof of Theorem~\ref{thm:classification}]
By Proposition~\ref{prop: Burnside}, $S$ is either affine (as in case (I)) or almost simple (as in case (II)).
The affine case was settled by Theorem~1.1 and Remark~1.2 of Dempwolff and Kantor~\cite{DempwolffK:22}, and the almost simple case is settled by Theorem~\ref{thm:main result}.
\end{proof}

\section{Equivalence and automorphisms}
\label{sec: equivalence}
In this section, we compute the automorphism groups of complex doubly transitive lines with almost simple symmetry groups, as in Theorem~\ref{thm:classification}(D).
As an application, we sort out equivalences in the unitary case (xiii).
The remaining automorphism groups of doubly transitive lines are known.
Type~(B) appears in~\cite{DempwolffK:22}, while the two-graphs behind types~(A) and (C) are treated in~\cite{Taylor:92}.

\subsection{Linear symmetry}
We begin with the lines of Example~\ref{ex.Paley conference}, as in Theorem~\ref{thm:classification}(xii).
Let $q\equiv 3 \bmod 4$ be a prime power with prime divisor $p$.
We write $\operatorname{\text{$\Gamma$}L}(2,q)$ for the group of all permutations $f\colon \mathbb{F}_q^2 \to \mathbb{F}_q^2$ of the form $f(u) = (Mu)^{\circ p^k}$ for $M \in \operatorname{GL}(2,q)$ and $k \geq 0$, while $\operatorname{\text{$\Sigma$}L}(2,q) \leq \operatorname{\text{$\Gamma$}L}(2,q)$ is the subgroup of all those $f$ with $M \in \operatorname{SL}(2,q)$.
Both $\operatorname{\text{$\Gamma$}L}(2,q)$ and $\operatorname{\text{$\Sigma$}L}(2,q)$ permute the one-dimensional subspaces of $\mathbb{F}_q^2$, and in each case the kernel of the action coincides with the center consisting of scalar functions.
This produces embeddings of $\operatorname{P\text{$\Gamma$}L}(2,q):= \operatorname{\text{$\Gamma$}L}(2,q) / Z(\operatorname{\text{$\Gamma$}L}(2,q))$ and $\operatorname{P\text{$\Sigma$}L}(2,q):= \operatorname{\text{$\Sigma$}L}(2,q) / Z(\operatorname{\text{$\Sigma$}L}(2,q))$ in $S_X$, where $X= \{\infty\} \cup \mathbb{F}_q$ indexes the one-dimensional subspaces of $\mathbb{F}_q^2$ as in Example~\ref{ex.Paley conference}.

\begin{theorem}
\label{thm: PSL automorphism group}
Let $q\equiv 3 \bmod 4$ be a prime power, and let $\mathscr{L}$ be the sequence of lines constructed in Example~\ref{ex.Paley conference}.
Then $\operatorname{Aut} \mathscr{L} = \operatorname{P\text{$\Sigma$}L}(2,q)$.
\end{theorem}

Our proof uses the following detail.

\begin{proposition}
\label{prop: one copy of socle}
Let $S \leq S_n$ be primitive and almost simple with socle $G$.
If $H \leq S$ and $H \cong G$, then $H = G$.
\end{proposition}

\begin{proof}
Suppose instead that $H \neq G$.
Then $H \triangleright (G \cap H) = \{1\}$ since $H$ is simple and normalizes $G$.
Hence, $S \geq \langle G, H \rangle \cong G \rtimes H$, and in particular, $|G|^2 \leq |S|$.
Meanwhile, $C_S(G) = \{1\}$ by Theorem~4.2A of~\cite{DixonMortimer:96}, and so conjugation provides an embedding $S \leq \operatorname{Aut} G$.
This is impossible since every finite simple group satisfies $|\operatorname{Aut} G| \leq \frac{1}{30} |G|^2$, as detailed in Lemma~2.2 of~\cite{Quick:04}.
\end{proof}

\begin{proof}[Proof of Theorem~\ref{thm: PSL automorphism group}]
We follow the notation of Example~\ref{ex.Paley conference}, and we also denote $S:= \operatorname{Aut} \mathscr{L} \geq \operatorname{PSL}(2,q)$.
The doubly transitive group $S$ must appear in Proposition~\ref{prop.classification}.
It also acts as automorphisms of $n = q+1$ complex lines in dimension $d = n/2$.
If $S$ is affine, then Theorem~1.1 of~\cite{DempwolffK:22} reports that $(d,n)$ are listed somewhere in Theorem~\ref{thm:classification}(I).
The only parameters with $d=n/2$ are in case (ii) with $d=2$, $n=4$, and $q=3$.
These lines are unique, and they indeed have $S = A_4 = \operatorname{P\text{$\Sigma$}L}(2,3)$.
Now assume $S$ has simple socle $G$.
If $G = \operatorname{PSU}(3,q_0)$ for some prime power $q_0 > 2$, then $q+1 = n = q_0^3 + 1$, and Theorem~\ref{thm.updated unitary groups}(c) shows $d = q_0^2 - q_0 + 1$.
This is impossible since $d = n/2$.
We ruled out all other possibilities from Proposition~\ref{prop.classification}(II), and so $G \cong \operatorname{PSL}(2,q)$ and $q > 3$.
By Proposition~\ref{prop: one copy of socle}, $G = \operatorname{PSL}(2,q) \leq S_X$ (in its usual embedding).
In particular,
\[
S \leq N_{S_X}\bigl( \operatorname{PSL}(2,q) \bigr) = \operatorname{Aut}\bigl( \operatorname{PSL}(2,q) \bigr) = \operatorname{P\text{$\Gamma$}L}(2,q),
\]
where the equalities follow from~\cite[Theorem~4.2A]{DixonMortimer:96} and~\cite[\S 3.3.4]{Wilson:09}.


Now it suffices to prove $\operatorname{P\text{$\Gamma$}L}(2,q) \cap S = \operatorname{P\text{$\Sigma$}L}(2,q)$.
Select $M \in \operatorname{GL}(2,q)$ and $k \geq 0$, and let $\sigma \in \operatorname{P\text{$\Gamma$}L}(2,q) \leq S_X$ be the corresponding permutation given by $\operatorname{span}\{ t_{\sigma(i)} \} = \operatorname{span}\{ (M t_i)^{\circ p^k} \}$ for $i \in X$.
We use Proposition~\ref{prop:switching equiv is unitary equiv} to show $\sigma \in S$ if and only if $\sigma \in \operatorname{P\text{$\Sigma$}L}(2,q)$.
For each $i \in X$, select $\omega_i \in \mathbb{F}_q^\times$ with $t_{\sigma(i)} = \omega_i (M t_i)^{\circ p^k}$.
Then for $i, j \in X$,
\[
\chi\bigl( [t_{\sigma(i)}, t_{\sigma(j)} ] \bigr) 
= \chi(\omega_i) \, \chi(\omega_j) \, \chi\Bigl( (\det M)^{p^k} [t_i, t_j]^{p^k} \Bigr)
= \chi(\omega_i) \, \chi(\omega_j) \, \chi( \det M) \, \chi\bigl( [t_i, t_j] \bigr),
\]
since the Frobenius automorphism preserves the set $Q$ of quadratic residues.
Thus,
\begin{equation}
\label{eq: PSL auto 1}
\mathcal{S}_{\sigma(i),\sigma(j)} = \chi(\omega_i) \, \chi(\omega_j) \, \chi( \det M) \, \mathcal{S}_{i,j}
\qquad
\text{for every $i,j \in X$.}
\end{equation}
When $M = c M'$ for some $c \in \mathbb{F}_q^\times$ and $M' \in \operatorname{SL}(2,q)$, we have $\det M = c^2$ and $\chi( \det M) = 1$, and then Proposition~\ref{prop:switching equiv is unitary equiv} implies $\sigma \in S$.
That is, $\operatorname{P\text{$\Sigma$}L}(2,q) \leq \operatorname{P\text{$\Gamma$}L}(2,q) \cap S$.

To prove the reverse inclusion, assume for the sake of contradiction that $\sigma \in S$ and yet $M \neq c M'$ for any $c \in \mathbb{F}_q^\times$ and $M' \in \operatorname{SL}(2,q)$.
Then it is easy to show $\chi( \det M ) = -1$.
By Proposition~\ref{prop:switching equiv is unitary equiv} and~\eqref{eq: PSL auto 1}, there are unimodular constants $\{ c_i \}_{i\in X}$ such that
$c_i \overline{c_j} \mathcal{S}_{i,j} 
= \mathcal{S}_{\sigma(i), \sigma(j)}
= - \chi(\omega_i) \, \chi(\omega_j) \, \mathcal{S}_{i,j}$
for every $i,j \in X$.
Equivalently,
\begin{equation}
\label{eq: PSL auto 2}
c_i \overline{c_j} \chi\bigl( [t_i, t_j] \bigr)
= - \chi( \omega_i) \, \chi( \omega_j ) \, \chi\bigl( [t_i, t_j] \bigr)
\qquad
\text{for every $i \neq j$ in $X$}.
\end{equation}
Taking $i = \infty$ and $j = a\in \mathbb{F}_q \subset X$ in~\eqref{eq: PSL auto 2}, we find $c_\infty \overline{c_a} = - \chi( \omega_\infty)\, \chi( \omega_a )$, so that
\begin{equation}
\label{eq: PSL auto 3}
c_a\,  \chi( \omega_a) 
= -  c_\infty \, \chi( \omega_\infty)
\end{equation}
is constant for all $a \in \mathbb{F}_q$.
Taking $i = 0$ and $j = -1$ in~\eqref{eq: PSL auto 2}, we find $c_0 \overline{c_{-1}} = - \chi( \omega_0) \, \chi( \omega_{-1} )$, so that $c_0 \, \chi( \omega_0 )  = -  c_{-1} \, \chi( \omega_{-1} )$.
This contradicts~\eqref{eq: PSL auto 3}.
\end{proof}

\subsection{Unitary symmetry}

Now we handle the lines of Theorem~\ref{thm.updated unitary groups}, as in Theorem~\ref{thm:classification}(xiii).
Let $q>2$ be a prime power with prime divisor~$p$, and let notation be as in the first paragraph of Subsection~\ref{subsec:unitary}.
We write $\operatorname{\text{$\Gamma$}U}(3,q)$ for the group of all functions $f \colon \mathbb{F}_{q^2}^3 \to \mathbb{F}_{q^2}^3$ with $f(v) = (Uv)^{\circ p^k}$ for $U \in \operatorname{GU}(3,q)$ and $k \geq 0$.
Then $\operatorname{\text{$\Gamma$}U}(3,q)$ permutes the isotropic lines in $\mathbb{F}_{q^2}^3$, and the kernel of this action consists of scalar functions, which form the center of $\operatorname{\text{$\Gamma$}U}(3,q)$.
This provides an embedding of $\operatorname{P\text{$\Gamma$}U}(3,q) := \operatorname{\text{$\Gamma$}U}(3,q) / Z\bigl( \operatorname{\text{$\Gamma$}U}(3,q) \bigr)$ in $S_X$.

\begin{lemma}
\label{lem: unitary equiv}
Let $\sigma \in \operatorname{P\text{$\Gamma$}U}(3,q) \leq S_X$, where there is a unitary $U \in \operatorname{GU}(3,q)$ and $k \geq 0$ such that $\operatorname{span}\{t_{\sigma(i)} \} = \operatorname{span}\{(Ut_i)^{\circ p^k}\}$ for all $i \in X$.
For any choice of nontrivial characters $\alpha,\beta \colon \mathbb{T}_q \to \mathbb{T}$, it holds that $\alpha = \beta^{p^k}$ if and only if $\sigma$ induces a unitary equivalence $\mathscr{L}_\beta \to \mathscr{L}_\alpha$ between the corresponding lines of Theorem~\ref{thm.updated unitary groups}(b).
\end{lemma}

\begin{proof}
We follow the notation of Theorem~\ref{thm.updated unitary groups}, and we also write $\mathcal{S}_\beta = \Bigl[ - \beta\bigl(-(t_i,t_j)^{q-1} \bigr) \Bigr]_{i,j \in X}$ for the signature matrix that produces $\mathscr{L}_\beta$.
Select constants $\{ \omega_i \}_{i\in X}$ in $\mathbb{F}_{q^2}^\times$ such that $\omega_i t_{\sigma(i)} = (U t_i)^{\circ p^k}$ for each $i \in X$, and put $c_i := \beta^{p^k}(\omega_i^{q-1})$.
For $i\neq j$ in $X$, we have $(t_{\sigma(i)}, t_{\sigma(j)}) = \omega_i^{p^k} \omega_j^{q p^k} (t_i, t_j)^{p^k}$, so that
\begin{equation}
\label{eq: unitary equiv 1}
[\mathcal{S}_\beta]_{\sigma(i), \sigma(j)}
= -\beta \bigl( - (t_{\sigma(i)}, t_{\sigma(j)} ) \bigr)
= - \beta^{p^k}\bigl( \omega_i^{q-1} \bigr) \, \beta^{p^k}\bigl( \omega_j^{q(q-1)} \bigr) \, \beta^{p^k}\bigl( -(t_i,t_j)^{q-1} \bigr)
= c_i \overline{c_j} [ \mathcal{S}_{\beta^{p^k}} ]_{i,j}.
\end{equation}
By Proposition~\ref{prop:switching equiv is unitary equiv}, $\sigma$ induces a unitary equivalence $\mathscr{L}_\beta \to \mathscr{L}_{\beta^{p^k}}$.

Conversely, suppose $\sigma$ induces a unitary equivalence $\mathscr{L}_\beta \to \mathscr{L}_\alpha$.
By Proposition~\ref{prop:switching equiv is unitary equiv} and~\eqref{eq: unitary equiv 1}, there are unimodular constants $\{c_i' \}_{i \in X}$ such that $c_i' \overline{c_j'} [ \mathcal{S}_{\alpha} ]_{i,j} = [ \mathcal{S}_{\beta} ]_{\sigma(i), \sigma(j)} = c_i \overline{c_j} [ \mathcal{S}_{\beta^{p^k}} ]_{i,j}$ for every $i,j \in X$.
In other words,
\begin{equation}
\label{eq: unitary equiv 2}
c_i' \overline{c_j'} \, \alpha \bigl( - (t_i,t_j)^{q-1} \bigr) = c_i \overline{c_j} \, \beta^{p^k}\bigl( - (t_i, t_j)^{q-1} \bigr)
\qquad
\text{for every $i \neq j$ in $X$}.
\end{equation}
Take $i \in T$ and $j = \infty$ in~\eqref{eq: unitary equiv 2} to obtain $c_i' \overline{ c_\infty' }\, \alpha(-1) = c_i \overline{c_\infty}\, \beta(-1)$, so that
\begin{equation}
\label{eq: unitary equiv 3}
c_i' \overline{c_i} = c_{\infty}' \overline{ c_\infty } \, \alpha(-1)\, \beta(-1) =: C \in \mathbb{T}
\end{equation}
is constant for $i \in T$.
Select any $e \in \mathbb{T}_q$, and find $b \in \mathbb{F}_{q^2}^\times$ with $b^{q-1} = -e$.
Then find $a \in \mathbb{F}_{q^2}$ with $a^{q+1} = -b-b^q$, so that $i:=[a,b]^\top \in T$.
For $j := [0,0]^\top \in T$, we have $(t_i,t_j)^{q-1} = b^{q-1} = -e$.
Then~\eqref{eq: unitary equiv 2} produces $c_i' \overline{c_j'} \, \alpha(e) = c_i \overline{c_j} \beta^{p^k}(e)$, and $\beta^{p^k}(e) = c_i' \overline{c_i} \overline{c_j'} c_j \alpha(e) = C \overline{C} \alpha(e) = \alpha(e)$ by~\eqref{eq: unitary equiv 3}.
\end{proof}

\begin{theorem}
\label{thm:unitary automorphisms}
Let $\beta \colon \mathbb{T}_q \to \mathbb{T}$ be a nontrivial character that takes non-real values, and let $\mathscr{L}_\beta$ be the lines of Theorem~\ref{thm.updated unitary groups}(b).
Then
$
\operatorname{PSU}(3,q) \trianglelefteq \operatorname{Aut} \mathscr{L}_\beta \leq \operatorname{P\text{$\Gamma$}U}(3,q).
$
Specifically, $\operatorname{Aut} \mathscr{L}_\beta$ consists of all $\sigma \in S_X$ for which there exist $U \in \operatorname{GU}(3,q)$ and $k \geq 0$ such that $\operatorname{span}\{ t_{\sigma}(i) \} = \operatorname{span}\{ (U t_i)^{\circ p^k} \}$ for all $i \in X$ and $\beta^{p^k} = \beta$.
\end{theorem}

The case where $\beta$ is real is treated in~\cite{Taylor:92}: $\operatorname{Aut} \mathscr{L}_\beta = \operatorname{P\text{$\Gamma$}U}(3,q)$ unless $q=3$, in which case $\operatorname{Aut} \mathscr{L}_\beta = \operatorname{Sp}(6,2)$.

\begin{proof}[Proof of Theorem~\ref{thm:unitary automorphisms}]
The condition that $\beta$ takes non-real values ensures $\mathscr{L}_\beta$ are not real lines.
Put $S:= \operatorname{Aut} \mathscr{L}_\beta$.
We first show $S$ is almost simple with socle $\operatorname{PSU}(3,q)$.
As in the proof of Theorem~\ref{thm: PSL automorphism group}, $S$ is doubly transitive and must appear in Proposition~\ref{prop.classification}.
If $S$ is affine, then $(d,n)$ appear somewhere in Theorem~\ref{thm:classification}(B).
We have $d=q^2-q+1$ and $n=q^3+1$, which excludes (iii) and~(iv).
The only other possibility is~(ii), with $\frac{n}{d} = \frac{2\sqrt{n}}{\sqrt{n}-1}$ and $n$ a square prime power.
Meanwhile, the true parameters satisfy
\[
\frac{n}{d}=\frac{q^3+1}{q^2-q+1} > \frac{q^3}{q^2-q+q}=q=\sqrt[3]{n-1}.
\]
For $n > 18$, we have $\frac{n}{d} > \sqrt[3]{n-1} > \frac{2\sqrt{n}}{\sqrt{n}-1}$, and after checking all possibilities for $n \leq 18$ we find that case~(ii) does not occur.
Overall, $S$ is not affine, and so it is almost simple.
Theorem~\ref{thm.linear groups} shows its socle cannot be $\operatorname{PSL}(2,q')$ for any prime power $q'$, since $\frac{n}{d} > q > 2$.
We have ruled out all other possibilities, and so the socle $S$ is isomorphic to $\operatorname{PSU}(3,q)$.
By Proposition~\ref{prop: one copy of socle}, the socle equals $\operatorname{PSU}(3,q)\leq S_X$ (in its usual embedding).
In particular,
\begin{equation}
\label{eq: PSU normalizer}
S \leq N_{S_X}\bigl( \operatorname{PSU}(3,q) \bigr) = \operatorname{Aut}\bigl( \operatorname{PSU}(3,q) \bigr) = \operatorname{P\text{$\Gamma$}U}(3,q),
\end{equation}
where the equalities follow from~\cite[Theorem~4.2A]{DixonMortimer:96} and~\cite[\S 3.6.3]{Wilson:09}.
The desired result now follows immediately from Lemma~\ref{lem: unitary equiv}.
\end{proof}

\begin{theorem}
\label{thm:unitary equivalences}
Let $\alpha,\beta \colon \mathbb{T}_q \to \mathbb{T}$ be nontrivial characters, both of which take non-real values.
Then the lines $\mathscr{L}_\alpha$ and $\mathscr{L}_\beta$ constructed in Theorem~\ref{thm.updated unitary groups}(b) are unitarily equivalent if and only if $\alpha = \beta^{p^k}$ for some $k \geq 0$.
\end{theorem}

\begin{proof}
Lemma~\ref{lem: unitary equiv} establishes that $\mathscr{L}_\beta$ is unitarily equivalent with $\mathscr{L}_{\beta^{p^k}}$ for every $k \geq 0$.
Conversely, suppose $\sigma \in S_X$ induces a unitary equivalence $\mathscr{L}_\beta \to \mathscr{L}_\alpha$.
Then $\sigma ( \operatorname{Aut} \mathscr{L}_\beta ) \sigma^{-1} = \operatorname{Aut} \mathscr{L}_\alpha$, and in particular, $\sigma$ conjugates the socle of $\operatorname{Aut} \mathscr{L}_\beta$ to that of $\operatorname{Aut} \mathscr{L}_\alpha$.
Both automorphism groups have socle $\operatorname{PSU}(3,q)$ by Theorem~\ref{thm:unitary automorphisms}, so $\sigma \in N_{S_X}\bigl( \operatorname{PSU}(3,q) \bigr) = \operatorname{P\text{$\Gamma$}U}(3,q)$ as in~\eqref{eq: PSU normalizer}.
Now Lemma~\ref{lem: unitary equiv} shows that $\alpha = \beta^{p^k}$ for some $k \geq 0$.
\end{proof}

\section*{Acknowledgments}

This work was partially supported by NSF DMS 1321779, NSF DMS 1829955, ARO W911NF-16-1-0008, AFOSR F4FGA06060J007, AFOSR FA9550-18-1-0107, AFOSR Young Investigator Research Program award F4FGA06088J001, and an AFRL Summer Faculty Fellowship Program award.
The authors thank Matt Fickus and John Jasper for many stimulating conversations, and they are also pleased to thank Bill Kantor and the anonymous referees for suggestions that substantially improved the quality of the manuscript.
In particular, the equivalences identified in Remark~\ref{rem: coincidences} were made possible thanks to some insightful suggestions provided by one reviewer.
The views expressed in this article are those of the authors and do not reflect the official policy or position of the United States Air Force, Army, Department of Defense, or the U.S.\ Government.

\bibliographystyle{abbrv}
\bibliography{references}

\end{document}